\crefname{hypothesis}{Hypothesis}{Hypotheses}
\title{Parameter-robust Preconditioners for the Stokes-Darcy Coupled Problem without Fractional Operators \thanks{Submitted to the editors DATE.
\funding{The work of Hu is partially supported by the National Science Foundation under grant DMS-220826.}}}
\author{ Wietse M. Boon\thanks{NORCE Norwegian Research Centre, N-5838 Bergen, Norway. \email{wibo@norceresearch.no}.}
   \and Xiaozhe Hu\thanks{ Department of Mathematics, Tufts University, Medford, MA 02155, USA. \email{xiaozhe.hu@tufts.edu}.}
   \and Xue Wang\thanks{School of Mathematics, Shandong University, Jinan, 250100, China. \email{wangxsdu@mail.sdu.edu.cn.}}
   }
\definecolor{darkgreen}{rgb}{0, .6, 0}
\newtheorem{assumption}{Assumption}[section]
\newtheorem{example}{Example}[section]
\newcommand{\enorm}[1]{\vert\kern-0.25ex \vert\kern-0.25ex \vert #1 \vert\kern-0.25ex \vert\kern-0.25ex \vert}
\begin{document}
\maketitle
\begin{abstract}
We consider the Stokes-Darcy coupled problem, which models the interaction between free-flow and porous medium flow.   By enforcing the normal flux continuity interface condition directly within the finite-element spaces, we establish unified well-posedness results for the coupled system under various boundary condition scenarios.  Using the operator preconditioning framework, we develop a parameter-robust preconditioner that avoids the use of fractional operators.  Numerical experiments employing both $H(\operatorname{div})$-conforming and nonconforming finite-element methods are presented to confirm the theoretical findings and demonstrate the robustness of the proposed block preconditioners with respect to the physical parameters and mesh size.
\end{abstract}

\begin{keywords}
  Stokes-Darcy problem; mixed finite-element method; parameter-robust preconditioner
\end{keywords}

\begin{AMS}
  65F08, 65F10, 65M60, 65N12, 65N55
\end{AMS}

\section{Introduction}
The coupled interaction between free flow and porous media flow remains an active research area due to its wide applications. Examples include biomedical applications such as blood filtration \cite{rohan2021multiscale}, environmental processes like soil drying \cite{discacciati2009navier}, and engineering applications such as karst aquifers \cite{cao2010coupled} and filters design \cite{ervin2009coupled}.  In this work, we focus the simplified setting where the free-flow domain is governed by the stationary Stokes equations, while the porous medium is described by Darcy's law. 

There is a rich body of literature on the mathematical analysis, numerical discretizations, and iterative solution techniques for the coupled Stokes-Darcy model. Regarding numerical solutions, various discretizations have been studied, including (mixed) finite-element methods \cite{Rui2009A,bernardi2008mortar,Lv2024A}, finite difference/volume methods \cite{luo2017uzawa,rui2020mac,schneider2020coupling}, and other advanced numerical approaches \cite{wang2014spectral,wang2019divergence,zhao2020DG}. Beyond solving all variables simultaneously, domain decomposition methods and subdomain iteration methods have also been investigated in \cite{vassilev2014domain,discacciati2018optimized,galvis2007balancing,boon2024mortar}, which explore parallel and optimized iterative strategies for the coupled problem.

The discretization of the Stokes-Darcy model leads to large, sparse linear systems, for which Krylov iterative methods are typically applied to achieve efficient solutions. For the Darcy equations in primal form, a variety of solvers have been proposed. 
In \cite{cai2009preconditioning}, the authors constructed both block diagonal and block triangular preconditioners, while \cite{chidyagwai2016constraint} introduced an indefinite constraint preconditioner. In \cite{beik2022preconditioning}, an augmented Lagrangian approach was proposed, along with spectral and a field-of-values analysis for the block triangular preconditioner from \cite{cai2009preconditioning} and the constraint preconditioner from \cite{chidyagwai2016constraint}. The authors of \cite{greif2023block} used marker and cell (MAC) discretization and developed the block-structured preconditioner. In \cite{boon2022robust}, three different formulations were studied and the corresponding robust preconditioners were proposed based on properly chosen fractional Sobolev spaces. On the other hand, robust solvers for the Stokes–Darcy problem with mixed Darcy equations have also been developed. 
In \cite{luo2017uzawa}, an Uzawa smoother was designed for finite-volume discretization on staggered grids. In \cite{holter2020robust}, the authors enforced the flux continuity using Lagrange multipliers, formulating the problem in appropriately weighted fractional spaces and developing parameter-robust block preconditioners based on the operator preconditioning framework \cite{Kent2011Preconditioning}.  Additionally, \cite{boon2020parameter} introduced an interface flux variable and reduced the system to a problem involving only the interface flux. The corresponding preconditioners leveraged fractional operators, ensuring parameter robustness. 

To the best of our knowledge, all existing parameter-robust preconditioners for the coupled Stokes-Darcy problems rely on some type of fractional operators due to the interface conditions. In this paper, we aim to construct a mesh-independent and parameter-robust monolithic solver for the mixed formulation of the coupled Stokes-Darcy problem that avoids fractional operators entirely. We achieve this by imposing the mass conservation condition on the interface directly within the finite-element space, eliminating the need for Lagrange multipliers (or any type of interface variables).  We consider two different discretizations: an $H(\operatorname{div})$-conforming and a nonconforming finite-element method. Using an appropriately weighted norm, we prove the well-posedness of the discrete system and demonstrate that the associated constants are robust with respect to the physical parameters in most cases. This allows us to design a parameter-robust preconditioner within the operator preconditioning framework \cite{Kent2011Preconditioning}. Special attention is required for the cases where essential boundary conditions are imposed on one of the subproblems.  For these cases, we show that the inf-sup constants depend on the ratio between the material parameters in the two flow regimes but remain independent of the mesh size.  By considering proper quotient spaces, we prove that the inf-sup constant becomes parameter-robust again, thereby maintaining robustness of the preconditioner in terms of the effective condition number.  Our numerical experiments confirm the theoretical results, illustrating the robustness of the proposed preconditioner across a range of parameter regimes.

The paper is organized as follows. In \Cref{sec:model}, we introduce the coupled Stokes-Darcy model, derive its weak formulation, and present the general discrete scheme. The well-posedness of the discrete linear system under various boundary condition cases is established in \Cref{sec:well-posedness}. In \Cref{sec-preconditioner}, we develop the block diagonal preconditioner and analyze its robustness. Finally, numerical experiments demonstrating the performance of the proposed methods are provided in \Cref{sec:numerical}.

\section{The Model, Weak Formulation, and Discrete Scheme} \label{sec:model}

\subsection{Model}
Consider flow in a domain $\Omega\in\mathbb{R}^d$, $d = 2,3$,  consisting of a Stokes subregion $\Omega_S$ and a porous subregion $\Omega_D$, sharing a common interface $\Gamma = \partial\Omega_S\cap\partial\Omega_D$. Let $\Gamma_i = \partial\Omega_i\backslash\Gamma, i = S,D$, be the outer boundary. Also, let $\mathbf{n}_i$ be the unit outer normal vectors of $\partial\Omega_i,i = S,D$, especially, $\mathbf{n}_S = -\mathbf{n}_D$ on $\Gamma$.
Furthermore, denote the velocity by $\mathbf{u} = (\mathbf{u}_S,\mathbf{u}_D)$, the pressure by $p = (p_S,p_D)$, and $\mathbf{f} = (\mathbf{f}_S,\mathbf{f}_D)$ describes body forces. 

The flow in $\Omega_S$ is governed by the time-independent Stokes equations, while the porous medium subregion $\Omega_D$ is described by the Darcy equations,
\begin{align} 
-\nabla\cdot \bm{\sigma}(\mathbf{u}_S,p_S)=\mathbf{f}_S, \quad & \quad 
\nabla \cdot \mathbf{u}_S = 0, \quad \text { in } \Omega_S, \label{Stokes-model} \\
 \mathbf{K}^{-1} \mathbf{u}_D+\nabla p_D = \mathbf{f}_D, \quad & \quad
\nabla \cdot \mathbf{u}_D = 0, \quad  \text { in } \Omega_D, \label{Darcy-model}
\end{align}
Here,  $\bm{\sigma}(\mathbf{u}_S,p_S) = 2 \mu \bm{\epsilon}\left(\mathbf{u}_S\right)- p_S \mathbf{I} $ is the stress tensor and $\bm\epsilon(\mathbf{u}_S)=\frac{1}{2}\left(\nabla \mathbf{u}_S+\nabla \mathbf{u}_S^T\right)$ is the strain tensor. $\mu>0$ is the fluid viscosity, 
$\mathbf{K}$ denotes the hydraulic conductivity of the porous medium and, in this paper, we consider a homogeneous, isotropic porous medium, i.e. $\mathbf{K} = K\mathbf{I}$, $K>0$. Consequently, we will refer to it simply as $K$ moving forward. 

On the interface $\Gamma$, the following conditions are imposed \cite{Beavers1967Boundary,Saffman1971On}:
\begin{subequations}\label{interface}
\begin{align}
\mathbf{u}_S \cdot \mathbf{n}_S+\mathbf{u}_D \cdot \mathbf{n}_D = 0, &\quad \text { on } \Gamma, \label{interface-1}\\
- \bm{\sigma}(\mathbf{u}_S,p_S)\mathbf{n}_S\cdot \mathbf{n}_S = p_D,  &\quad \text { on } \Gamma, \label{interface-2} \\
- \bm{\sigma}(\mathbf{u}_S,p_S)\mathbf{n}_S \cdot \bm{\tau} = \beta_\tau\mathbf{u}_S \cdot \bm{\tau}, &\quad \text { on } \Gamma . \label{interface-3}
\end{align}
\end{subequations}
Here, \eqref{interface-1} represents the mass conservation on the interface, \eqref{interface-2} is the balance of normal forces, and \eqref{interface-3} denotes the Beavers-Joseph-Saffman (BJS) condition, in which $\beta_\tau = \alpha_{BJS}\frac{\mu}{\sqrt{\bm{\tau}\cdot\kappa\cdot\bm{\tau}}}$,  where $\kappa = \mu K$ is the permeability, with $\bm{\tau}$ denotes an orthonormal system of tangent vectors on $\Gamma$, and $\alpha_{BJS}\approx 1$ is an experimental parameter. Note that the BJS condition is suitable for flows that are predominantly tangential to the interface, and alternatives for more general settings have been proposed \cite{eggenweiler2020unsuitability,strohbeck2023modification}.

To close the coupled problem \eqref{Stokes-model}-\eqref{interface}, we describe the following homogeneous boundary conditions,
\begin{subequations}\label{BC}
\begin{align}
  \mathbf{u}_S = 0, \ \ & \text { on } \Gamma_S^{E}, & \bm{\sigma}(\mathbf{u}_S,p_S)\mathbf{n}_S = 0, & \ \  \text { on } \Gamma_S^{N}, \label{BC-1}\\
  \mathbf{u}_D \cdot \mathbf{n}_D=0, \ \ & \text { on } \Gamma_D^{E}, & p_D = 0, & \ \ \text { on } \Gamma_D^{N}.   \label{BC-2}
\end{align}
\end{subequations}
Here, the superscript distinguishes the essential $(E)$ and natural $(N)$ boundary conditions, and denote $\Gamma_S = \Gamma_S^{E}\cup\Gamma_S^{N}$ and $\Gamma_D = \Gamma_D^{E}\cup\Gamma_D^{N}$. In order to ensure uniqueness of the Stokes velocity, assume that $\Gamma_S^E \neq \emptyset$.
We comment that non-homogeneous or other types of boundary conditions can be considered as well, and our proposed method and analysis can be extended accordingly in a straightforward manner.

\subsection{Weak Formulation}
In this subsection, the weak formulation of the coupled Stokes-Darcy problem \cref{Stokes-model} will be presented. 
Firstly, we introduce some notations. Given a polygon $T$, the Sobolev space $H^m(T) = W^{2,m}(T)$ is defined in the usual way with norm $\|\cdot\|_{m,T}$. In particular, $H^{0}(T) = L^2(T)$ and we simply denote the $L^2$ inner-product and norm by $\|\cdot\|_{T}$ and $(\cdot,\cdot)_{T}$. Briefly, the $L^2$ inner-product and $L^2$ norm on $\Omega_i$ are denoted by $(\cdot,\cdot)_i$ and  $\|\cdot\|_{i}$, and the $L^2$ norm on the whole domain by $\|\cdot\| = \|\cdot\|_{\Omega}$. Also, $(\cdot,\cdot)_{\Gamma}$ and $\|\cdot\|_{\Gamma}$ are the $L^2$ inner-product and norm on the interface.

To present the weak form of the coupled problem \eqref{Stokes-model}-\eqref{BC}, let us introduce the appropriate velocity and pressure spaces in each domain,
\begin{align*}
   & \mathbf{V}_S = \left\{\mathbf{v}_S\in[H^1(\Omega_S)]^d| \ \mathbf{v}_S = 0 \text{ on } \Gamma_S^{E} \right\}, \quad Q_S = L^2(\Omega_S), \\
   & \mathbf{V}_D = \left\{\mathbf{v}_D\in \mathbf{H}(\operatorname{div}; \Omega_D)| \ \mathbf{v}_D\cdot\mathbf{n}_D = 0 \text{ on } \Gamma_D^{E} \right\}, \quad Q_D = L^2(\Omega_D),
\end{align*}
where
$\mathbf{H}(\operatorname{div};\Omega) = \left\{\mathbf{v}\in[L^2(\Omega)]^d| \ \nabla\cdot\mathbf{v}\in L^2(\Omega)\right\}$.
Then, the velocity and pressure spaces in the whole domain are
\begin{align*}
    & \mathbf{V} = \left\{\mathbf{v} = (\mathbf{v}_S,\mathbf{v}_D)\in\mathbf{V}_S\times\mathbf{V}_D| ~\mathbf{v}_S\cdot\mathbf{n}_S + \mathbf{v}_D\cdot\mathbf{n}_D = 0 \text{ on } \Gamma\right\}, \\
    & Q = \left\{q = (q_S,q_D)\in Q_S\times Q_D\right\}.
\end{align*}
It should be noted that if each subproblem is equipped with essential boundary conditions, that is, $\Gamma_i^N = \emptyset$ for both $i = S,D$, then the pressure space is redefined as 
$$Q  / \mathbb{R} = \left\{q = (q_S,q_D)\in Q_S\times Q_D| \ \int_{\Omega}q \mathrm{~d}\mathbf{x} = 0\right\}.$$

The weak form of the problem \cref{Stokes-model}-\cref{BC} is, find $\mathbf{u}\in \mathbf{V}, \ p\in Q$ such that
\begin{subequations}\label{weak-form}
\begin{alignat}{2}
a(\mathbf{u},\mathbf{v}) + b(\mathbf{v},p) & = (\mathbf{f},\mathbf{v}),   && \quad \forall\, \mathbf{v}\in\mathbf{V}, \\
b(\mathbf{u},q) & = 0, && \quad  \forall\, q\in Q,
\end{alignat}
\end{subequations}
where the bilinear forms $a(\cdot,\cdot):\mathbf{V}\times\mathbf{V}\rightarrow\mathbb{R}$ and $b(\cdot,\cdot):\mathbf{V}\times Q\rightarrow\mathbb{R}$ and the right-hand side are defined as
\begin{align*}
a(\mathbf{u},\mathbf{v}) &= 2\mu(\bm{\epsilon}(\mathbf{u}_S), \bm{\epsilon}(\mathbf{v}_S))_S + {K}^{-1}(\mathbf{u}_D,\mathbf{v}_D)_{D} + \beta_\tau(\mathbf{u}_S\cdot\bm{\tau}, \mathbf{v}_S\cdot\bm{\tau})_{\Gamma}, \\
b(\mathbf{v},p) &= - (\nabla \cdot \mathbf{v}_S, p_S)_S - (\nabla\cdot\mathbf{v}_D, p_D)_{D}, \\
(\mathbf{f},\mathbf{v}) &= (\mathbf{f}_S, \mathbf{v}_S)_S +  (\mathbf{f}_D,\mathbf{v}_D)_{D}. 
\end{align*}

\subsection{Discrete Scheme}
In this subsection, we propose a general finite-element scheme and give two specific finite-element methods in \Cref{sec-conforming} and \Cref{sec-nonconforming} as our examples. Let us introduce discrete notations. Let $\mathcal{T}_h^i$ be a family of triangulations of $\Omega_i, i=S, D$ (that is, triangles for $d=2$ and tetrahedrons for $d=3$), and assume that they match on $\Gamma$. Therefore, $\mathcal{T}_h:=\mathcal{T}_h^S\cup \mathcal{T}_h^D$ is a partition of $\Omega$, the interface $\Gamma_h = \mathcal{T}_h^S\cap\mathcal{T}_h^D$. We denote the diameter of any $T \in \mathcal{T}_h$ by $h_T$, and $h = \max\limits_{T\in\mathcal{T}_h}h_T$. Furthermore, the set of all interior edges (faces) is represented by $\mathcal{E}_h^0$ and the set of boundary edges (faces) is $\mathcal{E}_h^{\partial}$, then $\mathcal{E}_h = \mathcal{E}_h^0\cup \mathcal{E}_h^{\partial}$.

With every edge (face) $e \in \mathcal{E}_h$, we associate a unit vector $\mathbf{n}_e$ such that $\mathbf{n}_e$ is orthogonal to $e$ and equals the unit outer normal to $\partial \Omega$ if $e \subset \partial \Omega$. For any $e\in\mathcal{E}_h$ and a given function $\phi$, the jump of $\phi$ across $e$ is defined by $[\![\phi]\!]_e = \phi^{+} - \phi^{-}$ for $e\in \mathcal{E}_h^{0}$ and $[\![\phi]\!]_e = \phi^{+}$ for $e\in\mathcal{E}_h^{\partial}$,  where $\phi^{\pm} = \lim\limits_{s\rightarrow 0}\phi(\mathbf{x}\mp s\mathbf{n}_e)$ with $\mathbf{x}\in e$.

With the partitioning, we associate two finite-dimensional spaces $\mathbf{V}_h$ and $Q_h$:
\begin{align*}
& \mathbf{V}_h := \left\{\mathbf{v} = (\mathbf{v}_S,\mathbf{v}_{D})\in\mathbf{V}_{S,h}\times \mathbf{V}_{D,h}| \ \int_e \mathbf{v}_S\cdot\mathbf{n}_S + \mathbf{v}_{D}\cdot\mathbf{n}_{D} = 0, \ \forall \,e \in \mathcal{E}_h(\Gamma) \right\}, \\
& Q_h := \left\{q = (q_S,q_{D}) \in Q_{S,h}\times Q_{D,h}\right\}.
\end{align*} 
We specify two choices of finite-element spaces later in this paper.
Consistent with the continuous case, if $\Gamma_i^N = \emptyset, i = S,D$, the pressure space is defined as,
\begin{align*}
Q_h / \mathbb{R} = \left\{q = (q_S,q_{D}) \in Q_{S,h}\times Q_{D,h}| \ \int_{\Omega} q\mathrm{~d}\mathbf{x} = 0\right\}.
\end{align*}

Now we are ready to describe the discrete scheme corresponding to the coupled model \eqref{Stokes-model}-\eqref{BC} as follows: Find $(\mathbf{u},p)\in\mathbf{V}_h\times Q_h$ such that
\begin{subequations}\label{finite-element-scheme}
\begin{alignat}{2}
a_h\left(\mathbf{u}, \mathbf{v}\right)+b_h\left(\mathbf{v}, p\right)& =l_h\left(\mathbf{f}, \mathbf{v}\right), &&\quad  \forall\,\mathbf{v} \in \mathbf{V}_h, \label{finite-element-scheme-1}\\
b_h\left(\mathbf{u}, q\right)& = 0, && \quad \forall\, q \in Q_h,\label{finite-element-scheme-2}
\end{alignat}
\end{subequations}
where $a_h(\cdot,\cdot)$ and $l_h(\cdot,\cdot)$ depend on the specific finite-element method, which will be defined later.  In general, we can split the bilinear forms $a_h$ and $b_h$ into two components, which are defined in the Stokes region and Darcy region, respectively, 
\begin{subequations}
\begin{align}
a_h(\mathbf{u},\mathbf{v}) & = \mu a_{S,h}(\mathbf{u}_S,\mathbf{v}_S) + {K}^{-1} a_{D,h}(\mathbf{u}_{D},\mathbf{v}_{D}),\label{ah-def} \\
b_h(\mathbf{v},p) & = b_{S,h}(\mathbf{v}_S,p_S) + b_{D,h}(\mathbf{v}_{D},p_{D}) \nonumber\\
& = -(\nabla_h\cdot\mathbf{v}_S, p_S)_S -(\nabla_h\cdot\mathbf{v}_D, p_D)_{D} \nonumber\\
& = -\sum_{T\in\mathcal{T}_h^S}(\nabla_h\cdot\mathbf{v}_S, p_S)_{T} -\sum_{T\in\mathcal{T}_h^D}(\nabla_h\cdot\mathbf{v}_D, p_D)_{T}. \label{bh-def}
\end{align}
\end{subequations}
Here, $\nabla_h\cdot$ is the discrete divergence operator defined by $(\nabla_h\cdot\mathbf{v})|_{T} = \nabla\cdot\mathbf{v}|_{T}, \ \forall\, T\in\mathcal{T}_h.
$
The discrete problem \eqref{finite-element-scheme} can be represented as $\mathcal{A}\mathbf{x} = \mathbf{b}$, where
   \begin{equation} \label{eq: saddlepoint A}
   \mathcal{A} = 
   \left(
   \begin{array}{cc}
   A_{\mathbf{u}} & B^{T}  \\
   B &         0       \\
    \end{array}
    \right),
    \end{equation} 
and the blocks correspond to the bilinear forms $a_h(\mathbf{u},\mathbf{v}) \mapsto A_{\mathbf{u}}$ and $b_h(\mathbf{u}, q)\mapsto B $.

In what follows, we discuss two different finite-element discretizations for the coupled Stokes-Darcy problem \eqref{Stokes-model}-\eqref{BC} as examples. One is the $H(\operatorname{div})$-conforming element from \cite{Lv2024A} based on the Raviart-Thomas element. The other uses the non-conforming Crouzeix-Raviart element \cite{Rui2009A}.

\subsubsection{\texorpdfstring{$H(\operatorname{div})$}--conforming finite-element method}\label{sec-conforming}

We start by defining the spaces for the Stokes subdomain. Let $P_m(T)$, $m\geq 0$, denote the space of polynomials of degree less than or equal to $m$ on element $T$. In the Stokes domain $\Omega_S$, define the finite-element spaces for velocity as $\mathbf{V}_{S,h} :=\mathbf{V}_{S,h}^l \oplus \mathbf{R} \mathbf{T}_{S,0}$, where
\begin{align*}
\mathbf{V}_{S,h}^{l} &=\left\{\mathbf{v}_S^l \in\left[H^1\left(\Omega_S\right)\right]^d| \ \left.\mathbf{v}_S^l\right|_T \in\left[P_1(T)\right]^d, ~ \left.\mathbf{v}_S^l\right|_{\Gamma_S^{E}} = \mathbf{0}, ~\forall\,T \in \mathcal{T}_h^S\right\}, \\
\mathbf{R T}_{S,0} &=\left\{\mathbf{v}_S^R \in \mathbf{H}\left(\operatorname{div} ; \Omega_S\right)| \ \left.\mathbf{v}_S^R\right|_T \in\left[P_0(T)\right]^d \oplus \mathbf{x} P_0(T), \right. \\
&\qquad\qquad\qquad\qquad\qquad\qquad\qquad\qquad 
 \left. \left.\mathbf{v}_S^R \cdot \mathbf{n}_S\right|_{\Gamma_S^{E}}=0, ~\forall\,T \in \mathcal{T}_h^S\right\} .
\end{align*}
It is easy to prove that $\mathbf{V}_{S,h}^{l} \cap \mathbf{R T}_{S,0}=\{\mathbf{0}\}$, see the \cite[Lemma 2.1]{Li2022A}. Therefore, any $\mathbf{v}_S \in \mathbf{V}_{S,h}$ can be uniquely decomposed as
$\mathbf{v}_S=\mathbf{v}_S^l+\mathbf{v}_S^R \in \mathbf{V}_{S,h}^l\oplus\mathbf{R T}_{S,0}$.
The discrete pressure space on $\Omega_S$ is defined as
\begin{align*}
& Q_{S,h}=\left\{q_S \in L^2\left(\Omega_S\right)| \ \left.q_S\right|_T \in P_0(T), ~\forall\,T \in \mathcal{T}_h^S\right\} .
\end{align*}
In summary, $\mathbf{V}_{S,h}$ is the piecewise linear space ($\mathbb{P}_1$) plus standard Raviart-Thomas-N\'{e}d\'{e}lec space ($\mathbb{RT}_0$), and $Q_h$ is the piecewise constant space ($\mathbb{P}_0$). 

In the Darcy domain $\Omega_D$, we use the following velocity and pressure spaces, 
\begin{align*}
\mathbf{V}_{D,h}
&= \left\{\mathbf{v}_{D} \in \mathbf{H}\left(\operatorname{div}; \Omega_D\right)|  \left.\mathbf{v}_{D}\right|_T \in\left[P_0(T)\right]^d\oplus \mathbf{x} P_0(T),\right.\\
&\qquad\qquad\qquad\qquad\qquad\qquad
\hskip -4pt \left. \left.\mathbf{v}_{D} \cdot \mathbf{n}_D\right|_{\Gamma_D^{E}}=0, ~\forall\,T \in \mathcal{T}_h^D\right\},\\
Q_{D,h}&=\left\{q_{D} \in L^2\left(\Omega_D\right)| \ \left.q_{D}\right|_T \in P_0(T),  ~\forall\,T \in \mathcal{T}_h^D\right\}.
\end{align*}
That is, the choice of Darcy velocity and pressure spaces is $\mathbb{RT}_0$-$\mathbb{P}_0$. In this setting, the discrete problem is the $\mathbb{P}_1\oplus\mathbb{RT}_0$-$\mathbb{P}_0$ discretization of the Stokes' equation with $\mathbb{RT}_0$-$\mathbb{P}_0$ for the Darcy equation. The interface condition, $\int_e \mathbf{v}_S \cdot \mathbf{n}_S + \mathbf{v}_D \cdot \mathbf{n}_D = 0$, $\forall \, e \in \mathcal{E}_h(\Gamma)$, is imposed on the finite-element spaces directly by representing the Darcy flux degrees of freedom by the Stokes velocity degrees of freedomes on the interface, as shown in \cref{fig-dofs-conforming}.

Then, the corresponding discrete bilinear forms $a_{S,h}(\cdot,\cdot)$ and $a_{D,h}(\cdot,\cdot)$ in \eqref{ah-def} and right hand side term should be defined as follows,
\begin{equation*}
\begin{aligned}
a_{S,h}(\mathbf{u}_S, \mathbf{v}_S) & = 2(\bm{\epsilon}(\mathbf{u}_S^l), \bm{\epsilon}(\mathbf{u}_S^l ))_S + \widetilde{\beta}_\tau(\mathbf{u}_S^l\cdot\bm{\tau}, \mathbf{v}_S^l\cdot\bm{\tau})_{\Gamma} \\
&\quad +2 \sum_{T \in \mathcal{T}_h^S} \sum_{e \in \mathcal{E}_h(\bar{\Omega}_S)} \alpha_T u_e v_e(\nabla \cdot \bm{\phi}_e, \nabla \cdot \bm{\phi}_e)_T \\
&\quad + 2(\bm{\epsilon}(\mathbf{u}_S^l) \mathbf{n}_S \cdot \mathbf{n}_S, \mathbf{v}_S^R \cdot \mathbf{n}_S)_{\Gamma} + 2\gamma(\bm{\epsilon}(\mathbf{u}_S^R) \mathbf{n}_S \cdot \mathbf{n}_S, \mathbf{v}_S^l \cdot \mathbf{n}_S)_{\Gamma} \\
&\quad + \frac{\gamma^{\prime}}{h}(\mathbf{u}_S^R\cdot \mathbf{n}_S, \mathbf{v}_S^R\cdot\mathbf{n}_S)_{\Gamma}, \\
a_{D,h}(\mathbf{u}_D, \mathbf{v}_D) &  = (\mathbf{u}_{D},\mathbf{v}_{D})_{D}, \\
l_h(\mathbf{f}, \mathbf{v}) & = (\mathbf{f}_S, \Pi_h^R \mathbf{v}_S^l)_S + (\mathbf{f}_S, \mathbf{v}_S^R)_S + (\mathbf{f}_D, \mathbf{v}_{D})_D.
\end{aligned}
\end{equation*}
Here, the parameter $\widetilde{\beta}_\tau = \frac{\alpha_{BJS}}{\sqrt{\bm{\tau}\cdot\kappa\cdot\bm{\tau}}} = \mu^{-1} \beta_\tau$,  $\bm{\phi}_e$ is the basis function in $\mathbf{RT}_{S,0}$, and $\bar{\Omega}_S = \Omega_S\cup \partial\Omega_S$. For the definition of $a_{S,h}(\cdot,\cdot)$, the first two terms are natural from the Stokes equation by using the integration by parts and the interface conditions \eqref{interface-2}-\cref{interface-3}. The third term enriches the $\mathbb{P}_1$ space with the $\mathbb{RT}_0$ component, where $\alpha_T>0$ is a positive constant. The fourth term arises from the interface condition, while the fifth term, with $\gamma=1$, is added to ensure symmetry. For the last term, we choose $\gamma^{\prime}$ that is bounded by a positive constant to ensure the coercivity, the details can be seen in \cite[Remark 3.1]{Lv2024A}. Note that \cite{Lv2024A} does not include the last term because the authors choose $\gamma = -1$, causing the fourth and fifth terms to cancel out during their theoretical analysis.  In contrast, we select $\gamma = 1$ to maintain symmetry and include the last term to correctly establish coercivity. For instance, in our numerical tests, we set $\alpha_T = 5$ and $\gamma{\prime} = 1$. Additionally, $\Pi_h^R$ denotes the Raviart-Thomas interpolation operator, which ensures pressure-robustness.

\subsubsection{Nonconforming finite-element method}\label{sec-nonconforming}
Define the nonconforming Crouzeix-Raviart piecewise linear finite-element space ($\mathbb{CR}_0$) as follows,
\begin{align*}
\mathbf{V}_{S,h} =  \Bigg\{ \mathbf{v}|_T \in[P_1(T)]^d, ~\forall\,T \in \mathcal{T}_h, & \int_e[\![\mathbf{v}]\!]_e \mathrm{~d} s=0, ~ \forall\,e \in \mathcal{E}_h(\Omega_S)\cup\mathcal{E}_h(\Gamma_S^{E})\Bigg\}, \\
\mathbf{V}_{D,h} =  \Bigg\{ \mathbf{v}|_T \in[P_1(T)]^d, ~\forall\,T \in \mathcal{T}_h, & \int_e[\![\mathbf{v}]\!]_e \mathrm{~d} s=0, ~ \forall\,e \in \mathcal{E}_h(\Omega_D), \\
& \int_e[\![\mathbf{v} \cdot \mathbf{n}_e]\!]_e \mathrm{~d} s=0, ~\forall\,e \in \mathcal{E}_h(\Gamma_D^E) \Bigg\},
\end{align*}
and the pressure space is still the piecewise constant function space ($\mathbb{P}_0$). 

The corresponding bilinear forms are defined as follows,
\begin{align*} 
a_{S,h}(\mathbf{u}_S,\mathbf{v}_S) & =  2\sum_{T\in\mathcal{T}_h^S}(\bm{\epsilon}(\mathbf{u}_S),\bm{\epsilon}(\mathbf{v}_S))_T + \widetilde{\beta}_{\tau}
(\mathbf{u}_S\cdot\bm{\tau} , \mathbf{v}_S\cdot\bm{\tau})_{\Gamma} \\
 &\quad + \sum_{e\in\mathcal{E}_h^0(\Omega_S)} \int_{e} \frac{1}{h_e} [\![\mathbf{u}_S]\!]_e ~ [\![\mathbf{v}_S]\!]_e \mathrm{~d}s, \\
 a_{D,h}(\mathbf{u}_D,\mathbf{v}_D) & = (\mathbf{u}_{D},\mathbf{v}_{D})_D + \sum_{e\in\mathcal{E}_h^0(\Omega_d)} \int_{e} \frac{1}{h_e} [\![\mathbf{u}_{D}\cdot\mathbf{n}_{e}]\!]_e ~[\![\mathbf{v}_{D}\cdot\mathbf{n}_{e}]\!]_e \mathrm{~d}s, \\
l_h(\mathbf{f}, \mathbf{v}) & = (\mathbf{f}_S, \Pi_h^R \mathbf{v}_S)_S  + (\mathbf{f}_D, \mathbf{v}_{D})_D,
\end{align*}
 where $h_e$ is the length of edge $e$. The last terms of $a_{S,h}(\cdot,\cdot)$ and $a_{D,h}(\cdot,\cdot)$ are the penalizing term to account for the lack of $H^1$ and $H(\operatorname{div})$-conformity, respectively. Similarly, $\Pi_h^R$ is the Raviart-Thomas interpolant operator. Again the interface condition is imposed directly on the finite-element space by representing the Darcy velocity degrees of freedom by the Stokes velocity degrees of freedom, as shown in \cref{fig-dofs-non}.

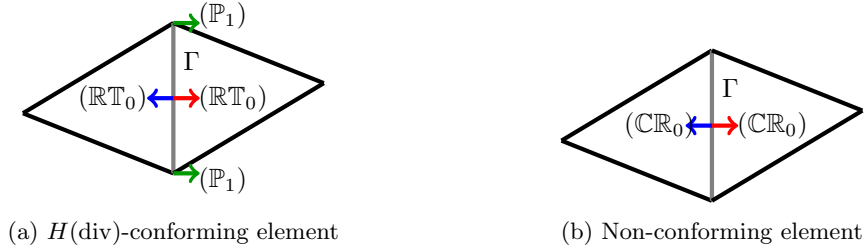
\begin{figure}
\centering
\begin{subfigure}{0.45\textwidth}
\centering
\begin{tikzpicture}[line width=1pt,scale=1]
\draw[black, ultra thick] (4,0.8) -- (6,2);
\draw[black, ultra thick] (4,0.8) -- (6,0);
\draw[gray, ultra thick] (6,2) -- (6,0);
\draw[black, ultra thick] (8,1.2) -- (6,2);
\draw[black, ultra thick] (8,1.2) -- (6,0);
\draw[darkgreen, ultra thick, ->] (6,0) -- (6.35,0);
\draw[darkgreen, ultra thick, ->] (6,2) -- (6.35,2);
\draw[red, ultra thick, ->] (6,1) -- (6.35,1);
\draw[blue, ultra thick, ->] (6,1) -- (5.65,1);
\filldraw[black] (6.2,2.1) node[anchor=west]{($\mathbb{P}_1$)};
\filldraw[black] (6.2,-0.1) node[anchor=west]{($\mathbb{P}_1$)};
\filldraw[black] (6.2,1.0) node[anchor=west]{($\mathbb{RT}_0$)};
\filldraw[black] (4.6,1.0) node[anchor=west]{($\mathbb{RT}_0$)};
\filldraw[black] (6,1.5) node[anchor=west]{$\Gamma$};
\end{tikzpicture}
\caption{$H(\operatorname{div})$-conforming element}
\label{fig-dofs-conforming}
\end{subfigure}
\hfill
\begin{subfigure}{0.45\textwidth}
\centering
\begin{tikzpicture}[line width=1pt,scale=1]
\draw[black, ultra thick] (4,0.8) -- (6,2);
\draw[black, ultra thick] (4,0.8) -- (6,0);
\draw[gray, ultra thick] (6,2) -- (6,0);
\draw[black, ultra thick] (8,1.2) -- (6,2);
\draw[black, ultra thick] (8,1.2) -- (6,0);
\draw[red, ultra thick, ->] (6,1) -- (6.35,1);
\draw[blue, ultra thick, ->] (6,1) -- (5.65,1);
\filldraw[black] (6.2,1.0) node[anchor=west]{($\mathbb{CR}_0$)};
\filldraw[black] (4.7,1.0) node[anchor=west]{($\mathbb{CR}_0$)};
\filldraw[black] (6,1.5) node[anchor=west]{$\Gamma$};
\end{tikzpicture}
\caption{Nonconforming element}
\label{fig-dofs-non}
\end{subfigure}
\caption{Two choices of finite-element spaces based on the Raviart-Thomas or the Crouzeix-Raviart element, respectively. The Darcy flux degrees of freedom (red) are determined by the Stokes velocity degrees of freedom (blue and green).}
\end{figure}

In the following section, we will prove that both schemes are well-posed under different choices of boundary conditions.  

\section{Well-posedness Analysis}\label{sec:well-posedness}
In this section, we focus on analyzing the well-posedness of linear system \eqref{finite-element-scheme} under an appropriate weighted norm. This analysis forms the foundation for designing a parameter-robust preconditioner using the operator preconditioning framework \cite{Kent2011Preconditioning}.

We define the following weighted norms for $(\mathbf{v}, q)\in\mathbf{V}_h \times Q_h$:
\begin{subequations} \label{eqs: norms v and Q}
\begin{align}
   & \|\mathbf{v}\|_{V_h}^2 =
   \mu\|\mathbf{v}_S\|_{V_{S,h}}^2 + {K}^{-1}\|\mathbf{v}_D\|_{V_{D,h}}^2 \nonumber \\
   & \qquad\qquad + \omega_S\mu\|P_{S,h}(\nabla_h\cdot\mathbf{v}_S)\|_S^2 + \omega_D{K}^{-1}\|\nabla_h\cdot\mathbf{v}_{D}\|_{D}^2, \label{u-norm}\\
   & \|q\|_{Q_h}^2 =  \omega_S^{-1}\mu^{-1}\| q_S\|_S^2 + \omega_D^{-1}{K}\|q_{D}\|_{D}^2,   \label{p-norm}
\end{align} 
\end{subequations}
where
$
 \|\mathbf{v}_S\|_{V_{S,h}}^2 =  a_{S,h}(\mathbf{v}_S,\mathbf{v}_S),\   
 \|\mathbf{v}_D\|_{V_{D,h}}^2 = a_{D,h}(\mathbf{v}_D,\mathbf{v}_D).
$
The parameters $\omega_S,\,\omega_D \geq 1$ are tunable and influence the eigenvalue distribution of the preconditioned system \cite{benzi2011analysis}. In addition, $P_{S,h}: L^2(\Omega_S)\rightarrow Q_{S,h}$ is a $L^2$-projection operator defined as
\begin{align*}
  & (P_{S,h}(\nabla_h\cdot\mathbf{v}_S), q_S)_S = (\nabla_h\cdot\mathbf{v}_S, q_S)_S, \ \ \forall \,q_S\in Q_{S,h},
\end{align*}
and there holds the estimate $\|P_{S,h}(\nabla\cdot\mathbf{v}_S)\|_S \leq  \|\nabla\cdot\mathbf{v}_S\|_S$.

Furthermore, \eqref{u-norm} and \eqref{p-norm} together yield the following triple norm,
\begin{align}\label{norm}
    \enorm{(\mathbf{v},q)}
     = & \big(\|\mathbf{v}\|_{V_h}^2 + \|q\|_{Q_h}^2\big)^{\frac12}.
\end{align}

\begin{remark}
    In \cref{u-norm}, the last term $\|\nabla_h\cdot\mathbf{v}_D\|_D$ is natural since $\mathbf{v}_D$ belongs to the space $\mathbf{H}(\operatorname{div};\Omega_D)$. On the other hand, the divergence term of Stokes velocity $\|P_{S,h}(\nabla_h\cdot\mathbf{v}_S)\|_S$ is related to the idea of augmented Lagrangian method \cite{fortin2000augmented}, and has been widely used in developing parameter-robust block preconditioner \cite{benzi2011analysis,moulin2019augmented,farrell2019augmented,beik2022preconditioning}.
\end{remark}

Besides properly weighted norms for the well-posedness of the Stokes-Darcy problem, it requires the Stokes inf-sup stability and the Darcy inf-sup stability, respectively. Therefore, we make the following assumptions. 
\begin{assumption}\label{ass:ass1} 
    The pair $\mathbf{V}_{S,h}\times Q_{S,h}$ is stable for the incompressible
    Stokes equations \eqref{Stokes-model}, i.e. the discrete Stokes inf-sup condition holds: There exists $\gamma_S>0$ independent of $h$, $\mu$ and $\beta_{\tau}$ such that,
\begin{align}\label{inf-sup-Stokes}
  \mathop{\sup}\limits_{\substack{\mathbf{v}_S\in\mathbf{V}_{S,h}\\ \mathbf{v}_S|_{\Gamma} = 0}} 
  \frac{(\nabla_h\cdot\mathbf{v}_S,q_S)}{\|\mathbf{v}_S\|_{V_{S,h}}} 
  &\geq \gamma_S \| q_S\|_S, 
  & \forall\, q_S&\in Q_{S,h}.
\end{align}
Moreover, if $|\Gamma_S^N| = 0$, then \eqref{inf-sup-Stokes} holds for $q_S \in Q_{S,h} / \mathbb{R}$.
\end{assumption}
\begin{assumption}\label{ass:ass2} The spaces $\mathbf{V}_{D,h}\times Q_{D,h}$ form a stable pair for the Darcy equations \eqref{Darcy-model}, i.e. there exists $\gamma_D>0$ independent of $h$ and $K$ such that,
\begin{align}\label{inf-sup-Darcy}
  \mathop{\sup}\limits_{\substack{\mathbf{v}_D\in\mathbf{V}_{D,h}\\ \mathbf{n} \cdot \mathbf{v}_D|_{\Gamma} = 0}} 
  \frac{(\nabla_h\cdot\mathbf{v}_D,q_D)}{(\|\mathbf{v}_D\|_{V_{D,h}}^2 + \| \nabla_h \cdot \mathbf{v}_D\|_D^2)^{\frac12}} 
  &\geq \gamma_D \| q_D\|_D, 
  & \forall\, q_D&\in Q_{D,h}.
\end{align}
Moreover, if $|\Gamma_D^N| = 0$, then \eqref{inf-sup-Darcy} holds for $q_D \in Q_{D,h} / \mathbb{R}$.
\end{assumption}

While each subproblem is inf-sup stable in terms of the physical parameters, the coupled Stokes-Darcy problem may be unstable under certain combinations of material parameters and boundary conditions.  We showcase this by presenting two numerical examples, \cref{ex3.1} and \cref{ex3.2}, where one of the subproblems is subjected to no-flux boundary conditions. For simplicity, we fix the parameters $\alpha_{BJS} = 1$, and use the nonconforming element $\mathbb{CR}_0$-$\mathbb{P}_0$-$\mathbb{CR}_0$-$\mathbb{P}_0$ for the discretization. The results are similar for $\mathbb{P}_1\oplus\mathbb{RT}_0$-$\mathbb{P}_0$-$\mathbb{RT}_0$-$\mathbb{P}_0$.
\begin{example}[Darcy subproblem with no-flux boundary conditions]\label{ex3.1}  
Consider the case in which Stokes subproblem has mixed boundary conditions, i.e. $\Gamma_S^N \neq \emptyset$, and the Darcy subproblem has pure Dirichlet boundary conditions, i.e. $\Gamma_D^N = \emptyset$.  
We present the inf-sup condition constant in \cref{table-Darcy-Dirichlet} for different $\mu K$. It is clear that the inf-sup condition constant is independent of the mesh size but depends on the material parameters. We can see that the larger the $\mu K$, the smaller the inf-sup constant. 
\end{example}

\begin{table}[htbp]
  \centering
  \caption{Inf-sup constant for Stokes subproblem with mixed boundary condition, Darcy subproblem with pure Dirichlet boundary condition.}
  \begin{tabular}{|c|ccccc|}
    \hline \hline 
    $\mu K \backslash h$ & 1/2  & 1/4 & 1/8 & 1/16 & 1/32 \\ 
    \hline
$1$   & 4.29e-01   & 4.21e-01   & 4.15e-01   & 4.12e-01   & 4.10e-01  \\
$10^{2}$ & 4.95e-02 & 4.84e-02   & 4.76e-02   & 4.72e-02   & 4.70e-02  \\
$10^{4}$ & 4.96e-03  & 4.86e-03   & 4.78e-03   & 4.74e-03   & 4.72e-03  \\
    \hline \hline 
  \end{tabular}
  \label{table-Darcy-Dirichlet}
\end{table}

\begin{example}[Stokes subproblem with velocity boundary conditions]
\label{ex3.2}
Consider Stokes subproblem with Dirichlet boundary condition, i.e. $\Gamma_S^N = \emptyset$, and Darcy subproblem with mixed boundary condition, i.e. $\Gamma_D = \Gamma_D^N\cup \Gamma_D^E$. \cref{table-Stokes-Dirichlet} shows the inf-sup constants for various values of $\mu K$, illustrating that as $\mu K$ decreases, the inf-sup constants become smaller.
\end{example}
\begin{table}[htbp]
  \centering
  \caption{Inf-sup constant for Stokes subproblem with Dirichlet boundary condition, Darcy subproblem with mixed boundary condition.}
  \begin{tabular}{|c|ccccc|}
    \hline \hline 
    $\mu K \backslash h$  & 1/2 & 1/4 & 1/8 & 1/16 & 1/32 \\ 
    \hline
$10^{-4}$ & 1.89e-01 & 2.08e-01   & 2.23e-01   & 2.34e-01   & 2.36e-01    \\
$10^{-2}$ & 8.64e-01  & 8.61e-01   & 8.52e-01   & 8.45e-01   & 8.42e-01 \\
$1$   & 9.69e-01    & 9.63e-01   & 9.60e-01   & 9.59e-01   & 9.58e-01 \\
    \hline \hline 
  \end{tabular}
  \label{table-Stokes-Dirichlet}
\end{table}

These two examples demonstrate that the configuration of the boundary conditions influences the inf-sup constants. Therefore, in what follows, we proceed to analyze the well-posedness of the Stokes-Darcy problem equipped with different boundary conditions. Given that the coupled system \eqref{finite-element-scheme} is a saddle-point system, based on the Brezzi theory \cite{boffi2013mixed}, we need to verify the continuity of the bilinear forms, coercivity of $a_h$ on the kernel of $b_h$, and the inf-sup condition on $b_h$. We start with the continuity and coercivity estimates in the following lemma.

\begin{lemma}\label{lemma-1}
The bilinear forms $a_h(\cdot,\cdot)$ and $b_h(\cdot,\cdot)$ satisfy 
\begin{subequations}\label{coer-continuity}
\begin{align}
& a_h(\mathbf{u}, \mathbf{v})  \leq\|\mathbf{u}\|_{V_h}\|\mathbf{v}\|_{V_h}, \quad \forall\,\mathbf{u}, \mathbf{v} \in \mathbf{V}_h, \label{a-continuity}\\
& b_h(\mathbf{v}, q)  \leq \|\mathbf{v}\|_{V_h} \| q \|_{Q_h}, 
\quad \forall\,(\mathbf{v}, q) \in \mathbf{V}_h \times Q_h, \label{b-continuity}\\
&a_h(\mathbf{v}, \mathbf{v}) \geq\|\mathbf{v}\|_{V_h}^2, 
\quad \forall\,\mathbf{v}\in\mathbf{V}_h \text{ with } b_h(\mathbf{v},q) = 0, ~\forall\,q\in Q_h. \label{a-coercivity} 
\end{align}
\end{subequations}
\end{lemma}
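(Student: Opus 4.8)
The plan is to prove the three estimates separately, exploiting the fact that the weighted norm $\|\cdot\|_{V_h}$ in \eqref{u-norm} is assembled directly from the energy forms $a_{S,h}$ and $a_{D,h}$ together with \emph{nonnegative} augmentation terms. This design is precisely what forces both the continuity and coercivity constants to equal one. For the continuity \eqref{a-continuity} of $a_h$, I would first note that $a_{S,h}$ and $a_{D,h}$ are symmetric and positive semidefinite—this is exactly what makes $\|\cdot\|_{V_{S,h}}$ and $\|\cdot\|_{V_{D,h}}$ well-defined seminorms, and in the conforming case is guaranteed by the choice $\gamma=1$ together with the bounded $\gamma'$ discussed after the definition of $a_{S,h}$. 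The Cauchy--Schwarz inequality in the associated energy inner products then gives $a_{S,h}(\mathbf{u}_S,\mathbf{v}_S)\le\|\mathbf{u}_S\|_{V_{S,h}}\|\mathbf{v}_S\|_{V_{S,h}}$ and similarly for the Darcy form. Recombining with the weights $\mu$ and $K^{-1}$ and applying a discrete Cauchy--Schwarz inequality to the resulting two-term sum bounds $a_h(\mathbf{u},\mathbf{v})$ by $(\mu\|\mathbf{u}_S\|_{V_{S,h}}^2+K^{-1}\|\mathbf{u}_D\|_{V_{D,h}}^2)^{1/2}$ times the analogous factor for $\mathbf{v}$; since the augmentation terms in \eqref{u-norm} are nonnegative, each factor is dominated by the full norm $\|\cdot\|_{V_h}$, which yields \eqref{a-continuity}.

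For the continuity \eqref{b-continuity} of $b_h$, the key tool is the $L^2$-projection identity $(\nabla_h\cdot\mathbf{v}_S,q_S)_S=(P_{S,h}(\nabla_h\cdot\mathbf{v}_S),q_S)_S$, which holds because $q_S\in Q_{S,h}$. Applying Cauchy--Schwarz to each of the Stokes and Darcy contributions and inserting the matching powers of $\omega_S\mu$ and $\omega_D K^{-1}$ (and their reciprocals) splits each product into a velocity factor controlled by the augmentation terms of $\|\mathbf{v}\|_{V_h}$ and a pressure factor that reassembles exactly into the summands of $\|q\|_{Q_h}$ in \eqref{p-norm}. A final discrete Cauchy--Schwarz over the two subdomains then delivers \eqref{b-continuity}, again with constant one.

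The coercivity \eqref{a-coercivity} is where the augmented Lagrangian structure is essential, and I expect this to be the real crux. For $\mathbf{v}$ in the kernel of $b_h$, I would test against judiciously chosen pressures. Taking $q_D=\nabla_h\cdot\mathbf{v}_D$—which is admissible because $\nabla_h\cdot\mathbf{v}_D$ is piecewise constant and hence lies in $Q_{D,h}$ for both the $\mathbb{RT}_0$ and $\mathbb{CR}_0$ discretizations of \Cref{sec-conforming} and \Cref{sec-nonconforming}, the very feature that makes the unprojected divergence norm natural in \eqref{u-norm}—forces $\|\nabla_h\cdot\mathbf{v}_D\|_D=0$; taking $q_S=P_{S,h}(\nabla_h\cdot\mathbf{v}_S)\in Q_{S,h}$ forces $\|P_{S,h}(\nabla_h\cdot\mathbf{v}_S)\|_S=0$. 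Consequently both augmentation terms in \eqref{u-norm} vanish on the kernel, so that $\|\mathbf{v}\|_{V_h}^2=\mu\|\mathbf{v}_S\|_{V_{S,h}}^2+K^{-1}\|\mathbf{v}_D\|_{V_{D,h}}^2$, which by the defining identities $\|\mathbf{v}_S\|_{V_{S,h}}^2=a_{S,h}(\mathbf{v}_S,\mathbf{v}_S)$ and $\|\mathbf{v}_D\|_{V_{D,h}}^2=a_{D,h}(\mathbf{v}_D,\mathbf{v}_D)$ is precisely $a_h(\mathbf{v},\mathbf{v})$. Hence \eqref{a-coercivity} holds with constant one. The only point demanding care is confirming that the two test pressures genuinely lie in the discrete spaces: this is immediate for the $L^2$-projection choice, while for $q_D$ it rests on the piecewise-constant nature of the Darcy divergence for the specific elements, so I would flag that dependence explicitly rather than treat it as automatic.
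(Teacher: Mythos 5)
Your proposal is correct and follows essentially the same route as the paper: Cauchy--Schwarz in the energy inner products (plus the $L^2$-projection identity for $b_h$) for the two continuity bounds, and for coercivity the observation that testing $b_h(\mathbf{v},\cdot)$ against $q_S=P_{S,h}(\nabla_h\cdot\mathbf{v}_S)$ and $q_D=\nabla_h\cdot\mathbf{v}_D$ annihilates both augmentation terms in \eqref{u-norm}, so $\|\mathbf{v}\|_{V_h}^2=a_h(\mathbf{v},\mathbf{v})$ on the kernel. Your explicit check that these test pressures lie in the discrete pressure spaces, and that $a_{S,h}$, $a_{D,h}$ are symmetric positive semidefinite so that Cauchy--Schwarz applies, fills in details the paper leaves implicit.
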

\begin{proof}
\eqref{a-continuity} and \eqref{b-continuity} follow from the Cauchy-Schwarz inequality, the definition of the bilinear forms and the corresponding norms.  For \eqref{a-coercivity}, if $b_h(\mathbf{v}, q)=0$ for all $q \in Q_h$, then $P_{S,h}(\nabla_h\cdot  \mathbf{v}_S) = 0$ and $\nabla_h\cdot\mathbf{v}_D = 0$. Thus, it is clear that the coercivity of $a_h(\cdot,\cdot)$ holds in the kernel space of $\mathbf{V}_h$.
\end{proof}

Note that the constants in \Cref{lemma-1} are independent of the material parameters, discretization parameters, and the chosen boundary conditions. The behavior observed in \cref{ex3.1} and \cref{ex3.2} is therefore due to the inf-sup condition on the bilinear form $b_h$. To address this, we distinguish the following four cases depending on whether natural boundary conditions are imposed or not, see \Cref{fig:bc_config}:
\begin{itemize}
   \item \textbf{Case NN}: In this case, $|\Gamma_i^N|>0$ for both $i \in \{S, D\}$.
   \item \textbf{Case EE}: The (normal) flux is imposed on the entire domain boundary, i.e. $|\Gamma_i^N| = 0$ for both $i \in \{S, D\}$.
   \item \textbf{Case NE}: Only the Stokes domain has natural (traction) boundary conditions, i.e. $|\Gamma_S^N|>0$ and $|\Gamma_D^N|=0$.
   \item \textbf{Case EN}: Only the Darcy domain has natural (pressure) boundary conditions, i.e. $|\Gamma_S^N|=0$ and $|\Gamma_D^N|>0$.
\end{itemize}

\begin{remark}
  If the porous medium $\Omega_D$ is entirely surrounded by the Stokes domain, then $|\Gamma_D^N| = 0$. Therefore, this configuration falls under cases \textbf{EE} or \textbf{NE}, depending on the boundary conditions imposed on $\partial \Omega = \Gamma_S$. Conversely, if the Stokes domain is entirely surrounded by the porous medium domain, then the setup corresponds to cases \textbf{EE} or \textbf{EN}, depending on the boundary conditions prescribed on $\partial \Omega = \Gamma_D$. 
\end{remark}

\begin{figure}[ht]
    \centering
\begin{subfigure}{0.45\textwidth}
    \centering
\begin{tikzpicture}[line width=1pt,scale=2]
\draw[gray, ultra thick] (1,0) -- (1,1);
\draw[black, ultra thick] (0,0) -- (2,0);
\draw[black, ultra thick] (0,0) -- (0,1);
\draw[black, ultra thick] (0,1) -- (2,1);
\draw[black, ultra thick] (2,0) -- (2,1);
\filldraw[black] (0.4,-0.15) node[anchor=west]{$\Gamma_S^N$};
\filldraw[black] (0.4,1.15) node[anchor=west]{$\Gamma_S^N$};
\filldraw[black] (-0.4,0.5) node[anchor=west]{$\Gamma_S^E$};
\filldraw[black] (1.4,-0.15) node[anchor=west]{$\Gamma_D^N$};
\filldraw[black] (1.4,1.15) node[anchor=west]{$\Gamma_D^N$};
\filldraw[black] (2.02,0.5) node[anchor=west]{$\Gamma_D^E$};
\filldraw[black] (0.4,0.5) node[anchor=west]{$\Omega_S$};
\filldraw[black] (1.4,0.5) node[anchor=west]{$\Omega_D$};
\end{tikzpicture}
\caption{\textbf{Case NN}}
\end{subfigure}
\hfill
\begin{subfigure}{0.45\textwidth}
    \centering
\begin{tikzpicture}[line width=1pt,scale=2]
\draw[gray, ultra thick] (1,0) -- (1,1);
\draw[black, ultra thick] (0,0) -- (2,0);
\draw[black, ultra thick] (0,0) -- (0,1);
\draw[black, ultra thick] (0,1) -- (2,1);
\draw[black, ultra thick] (2,0) -- (2,1);
\filldraw[black] (0.4,-0.15) node[anchor=west]{$\Gamma_S^E$};
\filldraw[black] (0.4,1.15) node[anchor=west]{$\Gamma_S^E$};
\filldraw[black] (-0.4,0.5) node[anchor=west]{$\Gamma_S^E$};
\filldraw[black] (1.4,-0.15) node[anchor=west]{$\Gamma_D^E$};
\filldraw[black] (1.4,1.15) node[anchor=west]{$\Gamma_D^E$};
\filldraw[black] (2.02,0.5) node[anchor=west]{$\Gamma_D^E$};
\filldraw[black] (0.4,0.5) node[anchor=west]{$\Omega_S$};
\filldraw[black] (1.4,0.5) node[anchor=west]{$\Omega_D$};
\end{tikzpicture}
\caption{\textbf{Case EE}}
\end{subfigure}
\begin{subfigure}{0.45\textwidth}
    \centering
\begin{tikzpicture}[line width=1pt,scale=2]
\draw[gray, ultra thick] (1,0) -- (1,1);
\draw[black, ultra thick] (0,0) -- (2,0);
\draw[black, ultra thick] (0,0) -- (0,1);
\draw[black, ultra thick] (0,1) -- (2,1);
\draw[black, ultra thick] (2,0) -- (2,1);
\filldraw[black] (0.4,-0.15) node[anchor=west]{$\Gamma_S^N$};
\filldraw[black] (0.4,1.15) node[anchor=west]{$\Gamma_S^N$};
\filldraw[black] (-0.4,0.5) node[anchor=west]{$\Gamma_S^E$};
\filldraw[black] (1.4,-0.15) node[anchor=west]{$\Gamma_D^E$};
\filldraw[black] (1.4,1.15) node[anchor=west]{$\Gamma_D^E$};
\filldraw[black] (2.02,0.5) node[anchor=west]{$\Gamma_D^E$};
\filldraw[black] (0.4,0.5) node[anchor=west]{$\Omega_S$};
\filldraw[black] (1.4,0.5) node[anchor=west]{$\Omega_D$};
\end{tikzpicture}
\caption{\textbf{Case NE}}
\end{subfigure}
\hfill
\begin{subfigure}{0.45\textwidth}
    \centering
\begin{tikzpicture}[line width=1pt,scale=2]
\draw[gray, ultra thick] (1,0) -- (1,1);
\draw[black, ultra thick] (0,0) -- (2,0);
\draw[black, ultra thick] (0,0) -- (0,1);
\draw[black, ultra thick] (0,1) -- (2,1);
\draw[black, ultra thick] (2,0) -- (2,1);
\filldraw[black] (0.4,-0.15) node[anchor=west]{$\Gamma_S^E$};
\filldraw[black] (0.4,1.15) node[anchor=west]{$\Gamma_S^E$};
\filldraw[black] (-0.4,0.5) node[anchor=west]{$\Gamma_S^E$};
\filldraw[black] (1.4,-0.15) node[anchor=west]{$\Gamma_D^N$};
\filldraw[black] (1.4,1.15) node[anchor=west]{$\Gamma_D^N$};
\filldraw[black] (2.02,0.5) node[anchor=west]{$\Gamma_D^E$};
\filldraw[black] (0.4,0.5) node[anchor=west]{$\Omega_S$};
\filldraw[black] (1.4,0.5) node[anchor=west]{$\Omega_D$};
\end{tikzpicture}
\caption{\textbf{Case EN}}
\end{subfigure}
\caption{The four cases with different boundary conditions.}
\label{fig:bc_config}
\end{figure}
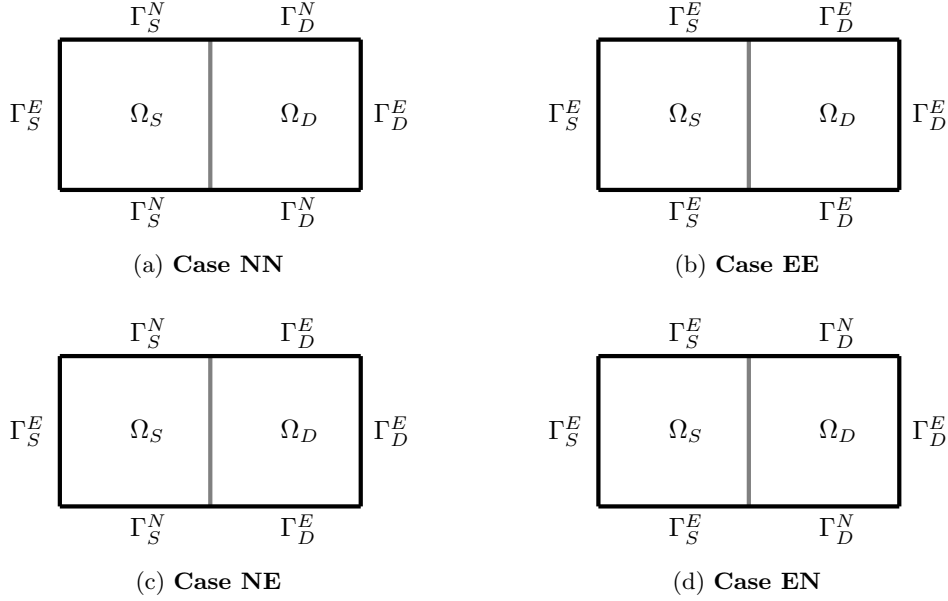

We continue by proving the inf-sup condition for these four cases, respectively, starting with the simplest case, i.e., \textbf{Case NN}.

\begin{lemma}[\textbf{Case NN}] \label{lemma: NN}
   Under \cref{ass:ass1} and \cref{ass:ass2}, there exists $\gamma > 0$ independent of material parameters and $h$ such that,
    \begin{align}
       &\inf_{q\in Q_h}\sup_{\mathbf{v}\in\mathbf{V}_h}\frac{b_h(\mathbf{v}, q) }{\|\mathbf{v}\|_{V_h}\|q\|_{Q_h}} \geq \gamma.
    \end{align}
\end{lemma}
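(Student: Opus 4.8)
The plan is to invoke the Brezzi theory: \Cref{lemma-1} already supplies the continuity of $a_h, b_h$ and the coercivity of $a_h$ on $\ker b_h$, so the only missing ingredient is the inf-sup condition on $b_h$. The governing idea in \textbf{Case NN} is that, since $|\Gamma_S^N|>0$ and $|\Gamma_D^N|>0$, the full $L^2$ pressures (not merely their quotients modulo constants) are controllable, so the two subdomains decouple completely. Accordingly, for a fixed $q=(q_S,q_D)\in Q_h$ I would construct a test velocity $\mathbf{v}=(\mathbf{v}_S,\mathbf{v}_D)$ whose two components each vanish on $\Gamma$ (in the sense $\mathbf{v}_S\cdot\mathbf{n}_S=0$ and $\mathbf{v}_D\cdot\mathbf{n}_D=0$ there), so that the discrete interface constraint of $\mathbf{V}_h$ holds automatically and $\mathbf{v}$ is admissible.

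Concretely, I would first apply \cref{ass:ass1} to obtain $\mathbf{v}_S^0\in\mathbf{V}_{S,h}$ with $\mathbf{v}_S^0|_\Gamma=0$, normalized so that $\|\mathbf{v}_S^0\|_{V_{S,h}}=\|q_S\|_S$ and, after a sign flip, $-(\nabla_h\cdot\mathbf{v}_S^0,q_S)_S\geq\gamma_S\|q_S\|_S^2$. Symmetrically, \cref{ass:ass2} yields $\mathbf{v}_D^0\in\mathbf{V}_{D,h}$ with $\mathbf{n}\cdot\mathbf{v}_D^0|_\Gamma=0$, normalized so that $\|\mathbf{v}_D^0\|_{V_{D,h}}^2+\|\nabla_h\cdot\mathbf{v}_D^0\|_D^2=\|q_D\|_D^2$ and $-(\nabla_h\cdot\mathbf{v}_D^0,q_D)_D\geq\gamma_D\|q_D\|_D^2$. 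I then set $\mathbf{v}:=(\omega_S^{-1}\mu^{-1}\mathbf{v}_S^0,\,\omega_D^{-1}K\mathbf{v}_D^0)$, where the scalings $\omega_S^{-1}\mu^{-1}$ and $\omega_D^{-1}K$ are chosen precisely to mirror the coefficients appearing in $\|q\|_{Q_h}^2$. Since both components vanish on $\Gamma$, we have $\mathbf{v}\in\mathbf{V}_h$.

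With this choice the lower bound is immediate, $b_h(\mathbf{v},q)\geq\omega_S^{-1}\mu^{-1}\gamma_S\|q_S\|_S^2+\omega_D^{-1}K\gamma_D\|q_D\|_D^2\geq\min(\gamma_S,\gamma_D)\|q\|_{Q_h}^2$. For the upper bound on $\|\mathbf{v}\|_{V_h}$ I would expand the four terms of \cref{u-norm}, use $\|\mathbf{v}_S^0\|_{V_{S,h}}=\|q_S\|_S$, the Darcy normalization, and the key estimate $\|P_{S,h}(\nabla_h\cdot\mathbf{v}_S^0)\|_S\leq C\|\mathbf{v}_S^0\|_{V_{S,h}}$ with a uniform constant $C$. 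Collecting terms and invoking $\omega_S,\omega_D\geq1$ (so that $\omega_S^{-2}(1+\omega_S C^2)\leq(1+C^2)\omega_S^{-1}$ and $\omega_D^{-2}(1+\omega_D)\leq2\omega_D^{-1}$) yields $\|\mathbf{v}\|_{V_h}^2\leq C'\|q\|_{Q_h}^2$ with $C'=\max(1+C^2,2)$. Dividing the two estimates produces the inf-sup constant $\gamma=\min(\gamma_S,\gamma_D)/\sqrt{C'}$, manifestly independent of $\mu$, $K$, $h$, $\omega_S$, and $\omega_D$.

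The main obstacle is the uniform bound $\|P_{S,h}(\nabla_h\cdot\mathbf{v}_S^0)\|_S\leq C\|\mathbf{v}_S^0\|_{V_{S,h}}$. Since $\|P_{S,h}(\nabla_h\cdot\mathbf{v}_S^0)\|_S\leq\|\nabla_h\cdot\mathbf{v}_S^0\|_S$, it suffices to control the piecewise divergence by the Stokes energy norm: the $\mathbb{P}_1$ part is handled by $\|\nabla_h\cdot\mathbf{w}\|_S\leq\sqrt{d}\,\|\bm{\epsilon}(\mathbf{w})\|_S$, while the $\mathbb{RT}_0$ part is absorbed by the grad-div term carrying $\alpha_T$ in $a_{S,h}$ (with the analogous argument for the nonconforming element). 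The remaining work is pure bookkeeping of the weights $\mu,K,\omega_S,\omega_D$ so that each factor cancels against $\|q\|_{Q_h}$. It is exactly this clean decoupling—available here because both pressures are genuinely $L^2$-controlled and the interface integrals vanish for the constructed $\mathbf{v}$—that breaks down in Cases EE, NE, and EN, where the constant modes and the interface coupling must be treated more carefully.
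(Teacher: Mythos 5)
Your proposal is correct and follows essentially the same route as the paper: both exploit that in \textbf{Case NN} the subdomain inf-sup conditions of \cref{ass:ass1} and \cref{ass:ass2} can be applied with test functions having zero normal flux on $\Gamma$ (so the interface constraint is trivially satisfied), and both use the test function $(\omega_S^{-1}\mu^{-1}\mathbf{v}_S^0,\,\omega_D^{-1}K\,\mathbf{v}_D^0)$ with the weights chosen to match $\|q\|_{Q_h}$. The only cosmetic difference is that the paper invokes the assumptions in the exact right-inverse form ($P_{S,h}\nabla_h\cdot\mathbf{v}_S^0=q_S$, $\nabla_h\cdot\mathbf{v}_D^0=q_D$ with the combined norm bound), whereas you use near-supremizers plus the auxiliary divergence estimate, which yields the same conclusion.
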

\begin{proof}
    In this case, both the Stokes and Darcy subdomains have a boundary portion on which the stress, respectively pressure, is imposed. We may therefore use the inf-sup conditions from \Cref{ass:ass1} and \Cref{ass:ass2} to construct a suitable test function with zero normal flux across $\Gamma$. We present a concise analysis to verify that $\gamma$ is independent of material parameters.  

    Let $q=\left(q_S, q_D\right) \in Q_h$ be given. Let $\mathbf{v}_S^0$ and $\mathbf{v}_D^0$ be chosen such that
    \begin{align}
        \int_e \mathbf{n}_S \cdot \mathbf{v}_S^0 = \int_e \mathbf{n}_S \cdot \mathbf{v}_D^0 &= 0, & 
        \forall\, e \in \Gamma_h, \label{eq: zeroflux}
    \end{align}
    and moreover
    \begin{subequations} \label{eqs: properties v0}
    \begin{align}
        P_{S, h} \nabla_h \cdot \mathbf{v}_S^0 &= q_S, 
        & \|\mathbf{v}_S^0\|_{V_{S,h}}^2 + \|P_{S,h}(\nabla_h\cdot\mathbf{v}_S^0)\|_S^2 
        &\lesssim \| q_S \|_S^2, \\
        \nabla_h \cdot \mathbf{v}_D^0 &= q_D, &        
        \|\mathbf{v}_D^0\|_{V_{D,h}}^2 + \|\nabla_h\cdot\mathbf{v}_D^0\|_D^2
        &\lesssim \| q_D \|_D^2.
    \end{align}
    \end{subequations}
    
    Now define our test function $\mathbf{v} := -(\omega_S^{-1}\mu^{-1} \mathbf{v}_S^0, \omega_D^{-1}{K} \mathbf{v}_D^0)$ and note that it belongs to $\mathbf{V}_h$ due to the zero flux condition \eqref{eq: zeroflux}. The following bounds now follow:
    \begin{subequations} \label{eqs: infsup NN}
        \begin{align}        
            b_h(\mathbf{v}, q) &= \omega_S^{-1}\mu^{-1} \| q_S \|_S^2 + \omega_D^{-1}{K} \| q_D \|_D^2 = \|q\|_{Q_h}^2, \\
            \| \mathbf{v} \|_{V_h}^2 &= 
            \mu \|\omega_S^{-1} \mu^{-1} \mathbf{v}_S^0\|_{V_{S,h}}^2 + \omega_S\mu\| \omega_S^{-1}\mu^{-1}P_{S,h}(\nabla_h\cdot\mathbf{v}_S^0)\|_S^2 \nonumber \\
            &\quad + {K}^{-1}\| \omega_D^{-1}{K} \mathbf{v}_D^0\|_{V_{D,h}}^2 + \omega_D {K}^{-1}\|\omega_D^{-1} {K} \nabla_h\cdot\mathbf{v}_D^0\|_D^2 \nonumber \\
            &\lesssim  \omega_S^{-1}\mu^{-1} \| q_S \|_S^2 + \omega_D^{-1} {K} \| q_D \|_D^2  = \|q\|_{Q_h}^2, 
        \end{align}
    \end{subequations}
    where we used the fact that $\omega_S, \omega_D \ge 1$. Together, \eqref{eqs: infsup NN} proves the result.
\end{proof}

In the remaining three cases, at least one subproblem has essential boundary conditions imposed on the entire boundary. Consequently, the approach used in the previous lemma cannot be applied directly, as it does not provide control over the constants. To address this, we introduce a key tool in the following lemma.

\begin{lemma} \label{lem: construction}
    Given $\bar f \in \mathbb{R}$, there exists a $\mathbf{v}^\Gamma = (\mathbf{v}_S^\Gamma, \mathbf{v}_D^\Gamma) \in \mathbf{V}_h$ that satisfies
    \begin{align} \label{eq: property vGamma}
        (\nabla_h \cdot \mathbf{v}_S^\Gamma, 1)_S
        = -(\nabla_h \cdot \mathbf{v}_D^\Gamma, 1)_D
        = \bar f,
    \end{align}
    and is bounded as
    \begin{align} \label{eq: bound on vGamma}
        \| \mathbf{v}_S^\Gamma \|_{V_{S,h}}
        + \| \mathbf{v}_D^\Gamma \|_{V_{D,h}}
        + \| \nabla_h \cdot \mathbf{v}_S^\Gamma \|_S
        + \| \nabla_h \cdot \mathbf{v}_D^\Gamma \|_D
        \lesssim
        |\bar f|.
    \end{align}
\end{lemma}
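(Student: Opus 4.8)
The plan is to exploit the homogeneity of the statement in $\bar f$: both \eqref{eq: property vGamma} and \eqref{eq: bound on vGamma} are linear in $\bar f$, so it suffices to construct a single, mesh- and parameter-independent field for $\bar f = 1$ and rescale by $\bar f$. First I would build a fixed continuous auxiliary field $\mathbf{w} = (\mathbf{w}_S, \mathbf{w}_D)$, independent of $h$, $\mu$ and $K$, with the properties $\nabla\cdot\mathbf{w}_S = |\Omega_S|^{-1}$, $\nabla\cdot\mathbf{w}_D = -|\Omega_D|^{-1}$; $\mathbf{w}_S = 0$ on $\Gamma_S^E$ and $\mathbf{w}_D\cdot\mathbf{n}_D = 0$ on $\Gamma_D^E$; normal-flux continuity $\mathbf{w}_S\cdot\mathbf{n}_S + \mathbf{w}_D\cdot\mathbf{n}_D = 0$ on $\Gamma$ with $\mathbf{w}_S\cdot\mathbf{n}_S = |\Gamma|^{-1}$; and, crucially, $\mathbf{w}_S\cdot\bm{\tau} = 0$ on $\Gamma$, i.e. purely normal flow at the interface. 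Such a field exists by standard theory for the divergence equation with mixed boundary data: on $\Omega_S$ the assumed Dirichlet portion $\Gamma_S^E\neq\emptyset$ guarantees solvability, while on $\Omega_D$ one solves a (possibly pure Neumann) mixed Poisson problem, whose compatibility condition $\int_{\partial\Omega_D}\mathbf{w}_D\cdot\mathbf{n}_D = \int_{\Omega_D}\nabla\cdot\mathbf{w}_D = -1$ is met by the prescribed interface flux. The a priori bounds yield $\|\mathbf{w}_S\|_{H^1(\Omega_S)} + \|\mathbf{w}_D\|_{H(\operatorname{div};\Omega_D)} \lesssim 1$ with a constant depending only on the geometry. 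This single construction covers all four boundary-condition cases \textbf{NN}, \textbf{EE}, \textbf{NE}, \textbf{EN} at once.

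Second, I would define $\mathbf{v}^\Gamma$ by interpolating $\mathbf{w}$ into the discrete spaces: on the Stokes side use the nodal $\mathbb{P}_1$ (respectively Crouzeix--Raviart) interpolant and set the $\mathbb{RT}_0$ enrichment component to zero; on the Darcy side use the flux-based Raviart--Thomas (respectively $\mathbb{CR}_0$) interpolant; and finally adjust the interface degrees of freedom so that $\int_e \mathbf{v}_S^\Gamma\cdot\mathbf{n}_S + \mathbf{v}_D^\Gamma\cdot\mathbf{n}_D = 0$ holds on every $e \in \mathcal{E}_h(\Gamma)$, which is consistent since the continuous normal fluxes already match on $\Gamma$. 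The identity \eqref{eq: property vGamma} then follows from the divergence theorem together with flux preservation: for Darcy the commuting property gives $(\nabla_h\cdot\mathbf{v}_D^\Gamma, 1)_D = (\nabla\cdot\mathbf{w}_D, 1)_D = -1$, while for Stokes the interior face contributions cancel (exactly in the conforming case, in the mean for $\mathbb{CR}_0$), so that $(\nabla_h\cdot\mathbf{v}_S^\Gamma, 1)_S = \int_{\partial\Omega_S}\mathbf{v}_S^\Gamma\cdot\mathbf{n}_S = \int_\Gamma\mathbf{w}_S\cdot\mathbf{n}_S = 1$, the fluxes on $\Gamma_S^E\cup\Gamma_S^N$ being zero by construction.

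Finally, the bound \eqref{eq: bound on vGamma} follows from the $h$-uniform stability of the interpolants and $\|\mathbf{w}\|\lesssim 1$. The divergence terms are controlled directly, $\|\nabla_h\cdot\mathbf{v}_D^\Gamma\|_D \le \|\nabla\cdot\mathbf{w}_D\|_D \lesssim 1$ by the divergence-commuting property and $\|\nabla_h\cdot\mathbf{v}_S^\Gamma\|_S \lesssim \|\mathbf{w}_S\|_{H^1(\Omega_S)} \lesssim 1$. For $\|\mathbf{v}_S^\Gamma\|_{V_{S,h}}$ the two design choices are essential: since the $\mathbb{RT}_0$ component vanishes, the $h^{-1}$-weighted stabilization and the enrichment terms in $a_{S,h}$ drop out, and since $\mathbf{w}_S\cdot\bm{\tau} = 0$ on $\Gamma$ the tangential interface term $\widetilde{\beta}_\tau\|\mathbf{v}_S^l\cdot\bm{\tau}\|_\Gamma^2$ vanishes, keeping the bound independent of the parameter-dependent coefficient $\widetilde{\beta}_\tau$; what remains is $\|\bm{\epsilon}(\mathbf{v}_S^l)\|_S \lesssim \|\mathbf{w}_S\|_{H^1(\Omega_S)}$, with the interior jump-penalties in the nonconforming case likewise controlled by $\|\mathbf{w}\|_{H^1}$. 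The main obstacle is exactly this last point: arranging the interpolation so that neither the $h^{-1}$ stabilization nor the BJS tangential term is excited, which is what forces the purely-normal interface condition and the vanishing enrichment; building the continuous field with all these constraints simultaneously, and verifying the geometry-only a priori bounds in the pure-Neumann Darcy cases (\textbf{EE}, \textbf{NE}), is the technical heart of the argument.
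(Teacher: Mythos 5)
Your overall architecture matches the paper's: prescribe a purely normal flux on $\Gamma$ carrying total mass $\bar f$, extend it into each subdomain, and use stability of the extension to get the bound; the choice of a purely normal interface trace to kill the $\widetilde\beta_\tau$-term is also exactly the paper's device. The decisive difference, and the genuine gap, is your choice of a \emph{constant} normal flux $\mathbf{w}_S\cdot\mathbf{n}_S = |\Gamma|^{-1}$ on $\Gamma$ together with the full Dirichlet condition $\mathbf{w}_S = 0$ on $\Gamma_S^E$. In \textbf{Case EE} and \textbf{Case EN} one has $\Gamma_S^E = \Gamma_S$, so $\Gamma_S^E$ meets $\Gamma$ at $\partial\Gamma$, and the prescribed boundary trace then has a jump discontinuity at that junction (value $|\Gamma|^{-1}\mathbf{n}_S$ on one side, $0$ on the other). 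A jump discontinuity is not in $H^{1/2}(\partial\Omega_S)$, so no $\mathbf{w}_S\in [H^1(\Omega_S)]^d$ with this trace exists at all, let alone with $\|\mathbf{w}_S\|_{H^1(\Omega_S)}\lesssim 1$; the "standard theory for the divergence equation" you invoke requires compatible ($H^{1/2}$) boundary data and does not apply here. Since the lemma is needed precisely in the cases where one subproblem carries essential conditions on its entire outer boundary, this failure hits the cases that matter. The paper avoids it by taking the flux profile $\phi\in H_0^1(\Gamma)$ (e.g.\ the parabola $6t(1-t)\bar f$), which vanishes at $\partial\Gamma$ and hence glues continuously to the homogeneous data on $\partial\Omega_S\setminus\Gamma$; your argument can be repaired by making the same replacement, at the cost of giving up the "single fixed field rescaled by $\bar f$" framing only cosmetically (the paper's construction is equally homogeneous in $\bar f$).

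A second, smaller inconsistency: for the $H(\operatorname{div})$-conforming element you propose to set the $\mathbb{RT}_0$ enrichment to zero \emph{and} to adjust the interface degrees of freedom to enforce $\int_e \mathbf{v}_S^\Gamma\cdot\mathbf{n}_S + \mathbf{v}_D^\Gamma\cdot\mathbf{n}_D = 0$ facet by facet. Those adjustments on $\Gamma$ live exactly in the $\mathbb{RT}_0$ facet degrees of freedom (the nodal $\mathbb{P}_1$ interpolant does not preserve facet-wise fluxes), so the enrichment cannot vanish on $\Gamma$; consequently the $h^{-1}$-weighted term $\frac{\gamma'}{h}\|\mathbf{v}_S^R\cdot\mathbf{n}_S\|_\Gamma^2$ does not "drop out" and must be estimated, which requires an approximation bound of the form $\|\phi-\phi_h\|_{e}\lesssim h\,|\phi|_{1,e}$ on each facet. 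The paper handles this by a Cl\'ement interpolant plus an explicit $\mathbb{RT}_0$ mass-conservation correction, and then sidesteps the norm bookkeeping entirely by defining $\mathbf{v}_S^\Gamma$, $\mathbf{v}_D^\Gamma$ as minimizers of the discrete energies subject to the boundary data, so that the bound \eqref{eq: bounds on vsvd} follows from well-posedness of the constrained minimization rather than from interpolant-by-interpolant estimates.
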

\begin{proof}
    Similar to \cite[Lem.~3.1]{boon2020parameter}, we first prescribe the normal flux on the interface $\Gamma$ and then create bounded extensions into the two subdomains. 
    We start by introducing $\phi \in H_0^1(\Gamma)$ as the solution to the following minimization problem
    \begin{align}
        \min_{\phi \in H_0^1(\Gamma)} & \frac12 \| \nabla_\Gamma \phi \|_\Gamma^2, & 
        \text{subject to } \int_\Gamma \phi = \bar f.
    \end{align}

    To exemplify, if $\Gamma$ is the line segment $[0, 1]$, then the solution is given by the parabola $\phi(t) = 6 t (1 - t) \bar f$ with $0 \le t \le 1$. Continuous dependency on the data and the Poincaré inequality give us the bound
    \begin{align}
        \| \phi \|_{1, \Gamma} \lesssim | \bar f |.
    \end{align}

    We now define $\phi_h \in P_1(\Gamma_h)$ as an interpolation of $\phi$ onto the normal trace space of $\mathbf{V}_{S, h}$ on $\Gamma$. For the $H(\operatorname{div})$-conforming element of \Cref{sec-conforming}, we choose this in two steps. First, we use the Clément interpolant to determine the nodal degrees of freedom, which preserves homogeneous boundary conditions. We then use the degrees of freedom related to the $\mathbb{RT}_0$ basis functions to correct for mass conservation, i.e. they are chosen such that $\int_e \phi = \int_e \phi_h$ for all facets $e$ on $\Gamma_h$.  For the nonconforming element of \Cref{sec-nonconforming}, we set the degrees of freedom such that $\phi_h(x_e) = \frac1{|e|} \int_e \phi$ for each facet $e$ with midpoint $x_e$. We note that this again ensures that $\int_e \phi = \int_e \phi_h$.

    Next, we extend $\phi_h$ to the remaining boundaries. Let $\bm{\varphi}_{S,h} \in \mathbf{V}_{S,h}|_{\partial \Omega_S}$ and $\varphi_{D,h} \in \mathbf{n}_D \cdot \mathbf{V}_{D,h}|_{\partial \Omega_D}$ be given by: 
    \begin{align}
        \bm{\varphi}_{S,h} &:= \begin{cases}
            \phi_h \mathbf{n}_S, &\text{on }\Gamma, \\
            0, &\text{on }\partial \Omega_S \setminus \Gamma,
        \end{cases}
        &
        \varphi_{D,h} &:= \begin{cases}
            \Pi_{DS} \phi_h, &\text{on }\Gamma, \\
            0, &\text{on }\partial \Omega_D \setminus \Gamma,
        \end{cases}
    \end{align}
    where $\Pi_{DS}: \mathbf{n}_S \cdot \mathbf{V}_{S,h}|_{\Gamma} \to \mathbf{n}_S\cdot \mathbf{V}_{D,h}|_{\Gamma}$ is the interpolation operator of normal fluxes from the Stokes space to the Darcy space. For the nonconforming space, this is simply the identity operator.

    We are now ready to define the extensions $\mathbf{v}_S^\Gamma$ and $\mathbf{v}_D^\Gamma$ as the solutions to the following minimization problems, respectively:
    \begin{align*}
        \min_{\mathbf{v}_h \in \mathbf{V}_{S,h}}
        &\| \mathbf{v}_h \|_{V_{S,h}}^2
        + \| \nabla_h \cdot \mathbf{v}_h \|_S^2, &
        \text{subject to } \mathbf{v}_h|_{\partial \Omega_S} = \bm{\varphi}_{S,h}, \\
        \min_{\mathbf{v}_h \in \mathbf{V}_{D,h}}
        &\| \mathbf{v}_h \|_{V_{D,h}}^2
        + \| \nabla_h \cdot \mathbf{v}_h \|_D^2, &
        \text{subject to } \mathbf{n}_D \cdot \mathbf{v}_h|_{\partial \Omega_D} = -\varphi_{D,h}.
    \end{align*}
    \begin{subequations} \label{eq: bounds on vsvd}
    Since both minimization problems are well-posed, we obtain the following bounds
    \begin{align}
        \| \mathbf{v}_S^\Gamma \|_{V_{S,h}}
        + \| \nabla_h \cdot \mathbf{v}_S^\Gamma \|_S
        &\lesssim \| \bm{\varphi}_{S,h} \|_{1/2, \partial \Omega_S}
        \lesssim \| \phi \|_{1, \partial \Omega_S}
        = \| \phi \|_{1, \Gamma} 
        \lesssim |\bar f|, \\
        \| \mathbf{v}_D^\Gamma \|_{V_{D,h}}
        + \| \nabla_h \cdot \mathbf{v}_D^\Gamma \|_D
        &\lesssim \| \varphi_{D,h} \|_{-1/2, \partial \Omega_D}
        \lesssim \| \phi \|_{\Gamma}
        \lesssim |\bar f|. 
    \end{align}
    \end{subequations}

    Finally, we define $\mathbf{v}^\Gamma = (\mathbf{v}_S^\Gamma, \mathbf{v}_D^\Gamma)$. By construction, we have $\mathbf{v}^\Gamma \in \mathbf{V}_h$ since 
    \begin{align}
        \mathbf{n}_D \cdot \mathbf{v}_D^\Gamma
        &= - \varphi_{D,h}
        = - \Pi_{DS} \phi_h
        = - \Pi_{DS} \mathbf{n}_S \cdot \mathbf{v}_S^\Gamma, &
        \text{on } \Gamma.
    \end{align}
    The bound \eqref{eq: bound on vGamma} now follows from \eqref{eq: bounds on vsvd}. The property \eqref{eq: property vGamma} follows from the divergence theorem. Namely,
       $ (\nabla_h \cdot \mathbf{v}_S^\Gamma, 1)_S
        = (\mathbf{n}_S \cdot \mathbf{v}_S^\Gamma, 1)_{\partial \Omega_S}
        = (\phi_h , 1)_{\Gamma}
        = \bar f$.
    An analogous calculation holds for $\mathbf{v}_D^\Gamma$.
\end{proof}

We now use the construction from \Cref{lem: construction} to prove the remaining three cases. 

\begin{lemma}[\textbf{Case NE}]\label{lemma: NE}
   Under \cref{ass:ass1} and \cref{ass:ass2}, there exists a constant $\gamma > 0$ independent of $h$ such that,
    \begin{align*}
       &\inf_{q\in Q_h}\sup_{\mathbf{v}\in\mathbf{V}_h}\frac{b_h(\mathbf{v}, q) }{\|\mathbf{v}\|_{V_h}\|q\|_{Q_h}} \geq \gamma \min\left\{ \frac1{\sqrt{\mu {K}}}, 1 \right\}. 
    \end{align*}
\end{lemma}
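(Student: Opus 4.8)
The plan is to construct, for each given $q = (q_S, q_D) \in Q_h$, a single test function $\mathbf{v} \in \mathbf{V}_h$ that realizes $b_h(\mathbf{v}, q) = \|q\|_{Q_h}^2$ while obeying $\|\mathbf{v}\|_{V_h} \lesssim \max\{1, \sqrt{\mu K}\}\,\|q\|_{Q_h}$; the asserted bound then follows at once, since the quotient $b_h(\mathbf{v},q)/(\|\mathbf{v}\|_{V_h}\|q\|_{Q_h}) = \|q\|_{Q_h}/\|\mathbf{v}\|_{V_h} \gtrsim \min\{1/\sqrt{\mu K}, 1\}$. The structural difficulty of \textbf{Case NE} is that $\Gamma_D^N = \emptyset$ constrains the Darcy divergence: any $\mathbf{v}_D \in \mathbf{V}_{D,h}$ with vanishing flux across $\Gamma$ yields a mean-zero $\nabla_h\cdot\mathbf{v}_D$. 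I would therefore split $q_D = \tilde q_D + c_D$ into its mean-zero part $\tilde q_D$ and its average $c_D := |\Omega_D|^{-1}\int_{\Omega_D} q_D$, handling $\tilde q_D$ by the Darcy quotient inf-sup and routing the average $c_D$ through the interface.

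First I would invoke \Cref{lem: construction} with $\bar f := \omega_D^{-1} K c_D |\Omega_D|$ to obtain an interface corrector $\mathbf{v}^\Gamma = (\mathbf{v}_S^\Gamma, \mathbf{v}_D^\Gamma) \in \mathbf{V}_h$ whose Darcy divergence has average exactly $-\omega_D^{-1} K c_D$ and which satisfies $\|\mathbf{v}_S^\Gamma\|_{V_{S,h}} + \|\mathbf{v}_D^\Gamma\|_{V_{D,h}} + \|\nabla_h\cdot\mathbf{v}_S^\Gamma\|_S + \|\nabla_h\cdot\mathbf{v}_D^\Gamma\|_D \lesssim |\bar f| \lesssim \omega_D^{-1} K \|c_D\|_D$. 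Next, using the Darcy inf-sup on the quotient space (\Cref{ass:ass2}, valid since $|\Gamma_D^N| = 0$), I would pick a bulk corrector $\mathbf{v}_D^0$ with zero flux across $\Gamma$ and $\nabla_h\cdot\mathbf{v}_D^0 = -\omega_D^{-1} K q_D - \nabla_h\cdot\mathbf{v}_D^\Gamma$; this target is mean-zero precisely by the choice of $\bar f$, so the quotient inf-sup applies and gives $\|\mathbf{v}_D^0\|_{V_{D,h}} \lesssim \omega_D^{-1} K \|q_D\|_D + |\bar f|$. Finally, using the \emph{full} Stokes inf-sup (\Cref{ass:ass1}, valid since $|\Gamma_S^N| > 0$), I would pick $\mathbf{v}_S^0$ with zero flux across $\Gamma$ and $P_{S,h}\nabla_h\cdot\mathbf{v}_S^0 = -\omega_S^{-1}\mu^{-1} q_S - P_{S,h}\nabla_h\cdot\mathbf{v}_S^\Gamma$, with $\|\mathbf{v}_S^0\|_{V_{S,h}} \lesssim \omega_S^{-1}\mu^{-1}\|q_S\|_S + |\bar f|$. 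Setting $\mathbf{v} := (\mathbf{v}_S^0 + \mathbf{v}_S^\Gamma,\ \mathbf{v}_D^0 + \mathbf{v}_D^\Gamma)$, the zero-flux property of the bulk correctors keeps $\mathbf{v} \in \mathbf{V}_h$, and by design $P_{S,h}\nabla_h\cdot\mathbf{v}_S = -\omega_S^{-1}\mu^{-1}q_S$ and $\nabla_h\cdot\mathbf{v}_D = -\omega_D^{-1}K q_D$, so that $b_h(\mathbf{v}, q) = \omega_S^{-1}\mu^{-1}\|q_S\|_S^2 + \omega_D^{-1}K\|q_D\|_D^2 = \|q\|_{Q_h}^2$.

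It then remains to bound $\|\mathbf{v}\|_{V_h}$, which I expect to be the crux. The two divergence terms of the norm reproduce exactly $\|q\|_{Q_h}^2$; expanding the velocity terms and using $\omega_S,\omega_D \ge 1$, every remaining contribution is controlled either by $\|q\|_{Q_h}^2$ or by $(\mu + K^{-1})|\bar f|^2$. Inserting $|\bar f|^2 \lesssim \omega_D^{-2}K^2\|c_D\|_D^2$ together with $\|q\|_{Q_h}^2 \ge \omega_D^{-1}K\|c_D\|_D^2$, the Darcy-weighted piece satisfies $K^{-1}|\bar f|^2 \lesssim \omega_D^{-1}\|q\|_{Q_h}^2$ and is robust, whereas the Stokes-weighted piece only gives $\mu|\bar f|^2 \lesssim \mu K\,\omega_D^{-1}\|q\|_{Q_h}^2 \le \mu K\|q\|_{Q_h}^2$. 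This single term is the sole source of non-robustness: the flux required to balance the Darcy average pressure must be carried across $\Gamma$ by the Stokes velocity, whose energy is weighted by $\mu$, producing the factor $\mu K$. Collecting everything yields $\|\mathbf{v}\|_{V_h}^2 \lesssim \max\{1, \mu K\}\|q\|_{Q_h}^2$, hence the inf-sup constant $\gtrsim \min\{1/\sqrt{\mu K}, 1\}$, in agreement with the degradation observed numerically in \Cref{ex3.1}. Mesh-independence is inherited directly from the $h$-uniform constants in \Cref{ass:ass1}, \Cref{ass:ass2}, and \Cref{lem: construction}.
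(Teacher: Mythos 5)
Your proposal is correct and follows essentially the same route as the paper: it invokes \Cref{lem: construction} to carry the Darcy mean pressure across the interface, applies the two subdomain inf-sup conditions (full for Stokes, quotient for Darcy) to zero-flux correctors, and identifies the $\mu$-weighted Stokes norm of the $K$-scaled interface corrector as the sole source of the $\sqrt{\mu K}$ degradation. The only differences are cosmetic bookkeeping — you absorb the $\omega_D^{-1}K$ scaling into $\bar f$ and prescribe the correctors' divergences directly, whereas the paper scales the final linear combination and works with augmented pressures $\tilde q_S,\tilde q_D$.
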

\begin{proof}

Let $q=\left(q_S, q_D\right) \in Q_h$ be given. Since we have essential boundary conditions on the Darcy subdomain boundary, we first handle the mean of $q_D$. Let $\mathbf{v}^\Gamma = (\mathbf{v}_S^\Gamma, \mathbf{v}_D^\Gamma) $ be defined according to \Cref{lem: construction} with $\bar f = -(q_D, 1)_D$. 

Let us now consider the augmented pressure
\begin{align*}
    \tilde q_S &:= q_S - (\omega_S\mu)(\omega_D^{-1}{K}) P_{S, h} \nabla_h \cdot \mathbf{v}_S^\Gamma, &
    \tilde q_D &:= q_D - \nabla_h \cdot \mathbf{v}_D^\Gamma.
\end{align*}
We emphasize that $\tilde q_S \in Q_{S, h}$ and
$
    (\tilde q_D, 1)_D = 
    (q_D - \nabla_h \cdot \mathbf{v}_D^\Gamma, 1)_D = 
    (q_D, 1)_D + \bar f = 0,
$
which means that $\tilde q_D \in Q_{D, h} / \mathbb{R}$. Therefore, we can use \Cref{ass:ass1} and \Cref{ass:ass2} to construct suitable test functions $\mathbf{v}_S^0$ and $\mathbf{v}_D^0$ such that the analogues to \eqref{eqs: properties v0} hold, based on $\tilde q$ instead of $q$. Similar to \Cref{lemma: NN}, we set
\begin{align*}
    \mathbf{v} := \textcolor{red}{-}(\omega_S^{-1}\mu^{-1} \mathbf{v}_S^0, \omega_D^{-1}{K} \mathbf{v}_D^0) - \omega_D^{-1}{K} \mathbf{v}^\Gamma,
\end{align*}
where the minus signs are due to the definition of $b(\cdot, \cdot)$ in \eqref{bh-def}.
To derive bounds analogous to \eqref{eqs: infsup NN}, we begin by substituting the definitions into the bilinear form,
    \begin{align} \label{eq: substitution b NE}
        b_h(\mathbf{v}, q) &= 
        (\nabla_h \cdot (\omega_S^{-1}\mu^{-1} \mathbf{v}_S^0 + \omega_D^{-1}{K} \mathbf{v}_S^\Gamma), q_S)_S 
        + \omega_D^{-1}{K} (\nabla_h \cdot (\mathbf{v}_D^0 + \mathbf{v}_D^\Gamma), q_D)_D \nonumber \\
        &= 
        ( \omega_S^{-1}\mu^{-1} \tilde q_S + \omega_D^{-1}{K} P_{S, h} \nabla_h \cdot \mathbf{v}_S^\Gamma, q_S)_S 
        + \omega_D^{-1}{K} ( \tilde q_D + \nabla_h \cdot \mathbf{v}_D^\Gamma, q_D)_D \nonumber \\
        &= \omega_S^{-1}\mu^{-1} \| q_S \|_S^2 + \omega_D^{-1}{K} \| q_D \|_D^2 = \|q\|_{Q_h}^2.
    \end{align}
The above calculation provides the following estimates on the divergence terms
\begin{align} 
   & \omega_S\mu \| P_{S, h} \nabla_h \cdot \mathbf{v}_S \|_S^2 
    + \omega_D{K}^{-1} \| \nabla_h \cdot \mathbf{v}_D \|_D^2  \nonumber\\
 & = \omega_S^{-1}\mu^{-1} \| q_S \|_S^2 
    + \omega_D^{-1}{K} \| q_D \|_D^2 
    = \|q\|_{Q_h}^2. \label{eq: NE 1}
\end{align}

Next, we consider the Stokes component:
\begin{align} \label{eq: NE 2}
        \mu \| \mathbf{v}_S \|_{V_{S,h}}^2 
        &= 
        \mu \| \omega_S^{-1}\mu^{-1} \mathbf{v}_S^0 + \omega_D^{-1}{K} \mathbf{v}_S^\Gamma\|_{V_{S,h}}^2 
         \nonumber \\
        &
        \lesssim
        \omega_S^{-2}\mu^{-1} \| \mathbf{v}_S^0  \|_{V_{S,h}}^2 + \omega_D^{-2}\mu {K}^2 \| \mathbf{v}_S^\Gamma\|_{V_{S,h}}^2 
        \nonumber \\
        &\lesssim
        \omega_S^{-2}\mu^{-1} \Big( \| q_S \|_S^2 
        +\| (\omega_S\mu) (\omega_D^{-1}{K}) P_{S, h} \nabla_h \cdot \mathbf{v}_S^\Gamma  \|_S^2 \Big)
        + \omega_D^{-2}\mu {K}^2 \| \mathbf{v}_S^\Gamma\|_{V_{S,h}}^2 
        \nonumber \\
        &\lesssim
        \omega_S^{-2}\mu^{-1} \| q_S \|_S^2 
        + \omega_D^{-2}\mu {K}^2 \| q_D \|_D^2      
        \le 
        \max \{\mu {K}, 1 \} \|q\|_{Q_h}^2,
\end{align}
where we again used $\omega_S, \omega_D \ge 1$ in the final inequality.

Similarly, we bound the Darcy component as follows,
\begin{align} \label{eq: NE 3}
        {K}^{-1} \| \mathbf{v}_D \|_{V_{D,h}}^2 
        &= 
        \omega_D^{-2}{K} \| \mathbf{v}_D^0 + \mathbf{v}_D^\Gamma \|_{V_{D,h}}^2 
        \lesssim \omega_D^{-2}
        {K} \| \mathbf{v}_D^0 \|_{V_{D,h}}^2 + \omega_D^{-2}{K} \| \mathbf{v}_D^\Gamma \|_{V_{D,h}}^2 
        \nonumber \\
        &\lesssim
        \omega_D^{-2}{K} \| q_D \|_D^2 
        + \omega_D^{-2}{K} (\| \mathbf{v}_D^\Gamma\|_{V_{D,h}}^2 + \| \nabla_h \cdot \mathbf{v}_D^\Gamma  \|_D^2)
        \nonumber \\
        &\lesssim \omega_D^{-1}
        {K} \| q_D \|_D^2
        \le \|q_D\|_{Q_h}^2.
\end{align}
Collecting \eqref{eq: NE 1}, \eqref{eq: NE 2}, and \eqref{eq: NE 3}, we obtain
\begin{align*}
    \| \mathbf{v} \|_{V_h} \lesssim \max \left\{\sqrt{\mu {K}}, 1 \right\} \|q\|_{Q_h}.
\end{align*}
The combination of this bound with \eqref{eq: substitution b NE} yields the result.
\end{proof}

The proof for the \textbf{Case EN} follows along the same arguments, but a different scaling is obtained. 

\begin{lemma}[\textbf{Case EN}]\label{lemma: EN}
   Under \cref{ass:ass1} and \cref{ass:ass2}, there exists a constant $\gamma > 0$ independent of $h$ and material parameters such that,
    \begin{align*}
       &\inf_{q\in Q_h}\sup_{\mathbf{v}_h\in\mathbf{V}_h}\frac{b_h(\mathbf{v}, q) }{\|\mathbf{v}\|_{V_h}\|q\|_{Q_h}} \geq 
       \gamma \min \left\{\sqrt{\mu {K}}, 1\right\}. 
    \end{align*}
\end{lemma}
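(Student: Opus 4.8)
The plan is to mirror the proof of \Cref{lemma: NE}, interchanging the roles of the Stokes and Darcy subdomains. Since the Stokes subdomain now carries essential boundary conditions on its entire boundary, the Stokes inf-sup condition in \Cref{ass:ass1} only holds on the quotient space $Q_{S,h}/\mathbb{R}$, whereas the Darcy inf-sup condition in \Cref{ass:ass2} holds on the full space $Q_{D,h}$. Accordingly, given $q = (q_S, q_D) \in Q_h$, I would first absorb the mean value of $q_S$ by invoking \Cref{lem: construction} with $\bar f = (q_S, 1)_S$, producing $\mathbf{v}^\Gamma = (\mathbf{v}_S^\Gamma, \mathbf{v}_D^\Gamma) \in \mathbf{V}_h$ with $(\nabla_h \cdot \mathbf{v}_S^\Gamma, 1)_S = \bar f$ and the bound $\| \mathbf{v}_S^\Gamma \|_{V_{S,h}} + \| \mathbf{v}_D^\Gamma \|_{V_{D,h}} + \| \nabla_h \cdot \mathbf{v}_S^\Gamma \|_S + \| \nabla_h \cdot \mathbf{v}_D^\Gamma \|_D \lesssim |\bar f| \lesssim \|q_S\|_S$.

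Next, I would introduce the augmented pressures
\begin{align*}
    \tilde q_S &:= q_S - P_{S, h} \nabla_h \cdot \mathbf{v}_S^\Gamma, &
    \tilde q_D &:= q_D - (\omega_D {K}^{-1})(\omega_S^{-1}\mu^{-1}) \nabla_h \cdot \mathbf{v}_D^\Gamma,
\end{align*}
chosen so that $(\tilde q_S, 1)_S = (q_S,1)_S - \bar f = 0$, i.e.\ $\tilde q_S \in Q_{S,h}/\mathbb{R}$, while $\tilde q_D \in Q_{D,h}$. Applying \Cref{ass:ass1} and \Cref{ass:ass2} to $\tilde q_S$ and $\tilde q_D$ yields $\mathbf{v}_S^0$ and $\mathbf{v}_D^0$ satisfying the analogues of \eqref{eqs: properties v0}. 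I would then set
\begin{align*}
    \mathbf{v} := -(\omega_S^{-1}\mu^{-1} \mathbf{v}_S^0, \omega_D^{-1}{K} \mathbf{v}_D^0) - \omega_S^{-1}\mu^{-1} \mathbf{v}^\Gamma,
\end{align*}
where the interface correction $\mathbf{v}^\Gamma$ is now scaled by the Stokes pressure weight $\omega_S^{-1}\mu^{-1}$ rather than the Darcy weight used in \Cref{lemma: NE}. A direct substitution, exactly as in \eqref{eq: substitution b NE}, shows that the $\mathbf{v}^\Gamma$ contributions cancel against the complex scaling in the augmented pressures, leaving $b_h(\mathbf{v}, q) = \omega_S^{-1}\mu^{-1}\|q_S\|_S^2 + \omega_D^{-1}{K}\|q_D\|_D^2 = \|q\|_{Q_h}^2$. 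The same cancellation gives $P_{S,h}\nabla_h \cdot \mathbf{v}_S = -\omega_S^{-1}\mu^{-1}q_S$ and $\nabla_h \cdot \mathbf{v}_D = -\omega_D^{-1}{K}q_D$, so the divergence contributions to $\|\mathbf{v}\|_{V_h}^2$ sum exactly to $\|q\|_{Q_h}^2$.

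It remains to bound the velocity contributions, and this is where the main bookkeeping obstacle lies. The Stokes term is controlled by $\|q\|_{Q_h}^2$ as before, using $\|\mathbf{v}_S^0\|_{V_{S,h}} \lesssim \|\tilde q_S\|_S \lesssim \|q_S\|_S$, the bound $\|\mathbf{v}_S^\Gamma\|_{V_{S,h}} \lesssim \|q_S\|_S$, and $\omega_S \ge 1$. The parameter dependence instead enters through the Darcy term $K^{-1}\|\mathbf{v}_D\|_{V_{D,h}}^2$, where the correction $\mathbf{v}_D^\Gamma$, scaled by $\omega_S^{-1}\mu^{-1}$, is measured in the $K^{-1}$-weighted norm; estimating $\omega_S^{-2}\mu^{-2}K^{-1}\|\mathbf{v}_D^\Gamma\|_{V_{D,h}}^2 \lesssim \omega_S^{-2}\mu^{-2}K^{-1}\|q_S\|_S^2 \le (\mu K)^{-1}\|q\|_{Q_h}^2$, after using $\|q_S\|_S^2 \le \omega_S\mu\|q\|_{Q_h}^2$, produces the factor $\max\{(\mu K)^{-1}, 1\}$. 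Collecting the estimates gives $\|\mathbf{v}\|_{V_h} \lesssim \max\{1/\sqrt{\mu K}, 1\}\|q\|_{Q_h}$, which combined with $b_h(\mathbf{v}, q) = \|q\|_{Q_h}^2$ yields the inf-sup constant $\gtrsim \min\{\sqrt{\mu K}, 1\}$. The key point to verify carefully is that, unlike in \Cref{lemma: NE} where the $\mathbf{v}^\Gamma$ correction entered the $\mu$-weighted Stokes norm and produced $\sqrt{\mu K}$, here the mirrored construction routes the correction into the $K^{-1}$-weighted Darcy norm, yielding the reciprocal scaling $1/\sqrt{\mu K}$ and hence the claimed $\min\{\sqrt{\mu K}, 1\}$ behavior.
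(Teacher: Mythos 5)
Your proposal is correct and follows essentially the same route as the paper, which itself only sketches \textbf{Case EN} by referring back to \Cref{lemma: NE} with the roles of the subdomains interchanged: same choice $\bar f = (q_S,1)_S$, same augmented pressures, same test function $\mathbf{v} = -(\omega_S^{-1}\mu^{-1}\mathbf{v}_S^0, \omega_D^{-1}K\mathbf{v}_D^0) - \omega_S^{-1}\mu^{-1}\mathbf{v}^\Gamma$, and the same resulting bound $\|\mathbf{v}\|_{V_h} \lesssim \max\{1/\sqrt{\mu K},1\}\|q\|_{Q_h}$. Your write-up in fact supplies more detail than the paper does, and the bookkeeping (in particular, that the $\mathbf{v}_D^\Gamma$ correction now lands in the $K^{-1}$-weighted Darcy norm, producing the reciprocal scaling) is accurate.
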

\begin{proof}
We outline the main idea, as the proof follows the same process as in \Cref{lemma: NE}, with $\mathbf{v}_S$ and $\mathbf{v}_D$ switching roles. Specifically, for a given $(q_S, q_D) \in Q_{h}$, we first construct $\mathbf{v}^\Gamma$ using \Cref{lem: construction} with $\bar{f} = (q_S, 1)_S$. Then the augmented pressures are given by
\begin{align*}
    \tilde q_S &:= q_S - P_{S, h} \nabla_h \cdot \mathbf{v}_S^\Gamma, &
    \tilde q_D &:= q_D - (\omega_S^{-1}\mu^{-1}) (\omega_D {K}^{-1}) \nabla_h \cdot \mathbf{v}_D^\Gamma. 
\end{align*}
These are used to construct $\mathbf{v}_S^0$ and $\mathbf{v}_D^0$ according to \cref{ass:ass1} and \cref{ass:ass2}.  Then we define
$
    \mathbf{v} := -(\omega_S^{-1}\mu^{-1} \mathbf{v}_S^0, \omega_D^{-1}{K} \mathbf{v}_D^0) - \omega_S^{-1}\mu^{-1} \mathbf{v}^\Gamma.
$
Following the same steps as in \Cref{lemma: NE}, we now obtain
\begin{align*} 
    b_h(\mathbf{v}, q) &= \|q\|_{Q_h}^2, &
    \| \mathbf{v}_h \|_{V_h} &\lesssim \max \left\{ \frac1{\sqrt{\mu {K}}}, 1 \right\} \|q\|_{Q_h},
\end{align*}
from which the result follows.
\end{proof}

Lastly, we present \textbf{Case EE} in the following lemma.
\begin{lemma}[\textbf{Case EE}] \label{lemma: EE}
   Under \cref{ass:ass1} and \cref{ass:ass2}, there exists a constant $\gamma > 0$ independent of $h$ and material parameters such that,
    \begin{align}
       &\inf_{q\in Q_h / \mathbb{R}}\sup_{\mathbf{v}_h\in\mathbf{V}_h}\frac{b_h(\mathbf{v}, q) }{\|\mathbf{v}\|_{V_h}\|q\|_{Q_h}} \geq 
       \gamma.
    \end{align}
\end{lemma}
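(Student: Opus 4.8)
The plan is to exploit the quotient structure, which is what distinguishes this case from \Cref{lemma: NE} and \Cref{lemma: EN}. First I would record that $b_h(\mathbf{v}, \mathbf{1}) = 0$ for every $\mathbf{v} \in \mathbf{V}_h$: since $|\Gamma_i^N| = 0$ here, the outer boundary carries no flux ($\mathbf{v}_S = 0$ on $\Gamma_S^E$ and $\mathbf{v}_D \cdot \mathbf{n}_D = 0$ on $\Gamma_D^E$), so $(\nabla_h \cdot \mathbf{v}_S, 1)_S + (\nabla_h \cdot \mathbf{v}_D, 1)_D$ reduces to $\int_\Gamma (\mathbf{v}_S \cdot \mathbf{n}_S + \mathbf{v}_D \cdot \mathbf{n}_D)$, which vanishes by the interface constraint defining $\mathbf{V}_h$. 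Hence $b_h$ descends to $Q_h/\mathbb{R}$ and the relevant norm is the quotient norm $\|q\|_{Q_h} = \inf_{c \in \mathbb{R}} \|q - c\|_{Q_h}$. I would then split any $q \in Q_h/\mathbb{R}$ as $q = q^0 + q^c$, where $q^0$ has zero mean on each subdomain and $q^c = (\bar q_S, \bar q_D)$ is piecewise constant with zero global mean. This splitting is orthogonal in the $Q_h$-inner product, so $\|q\|_{Q_h}^2 = \|q^0\|_{Q_h}^2 + \|q^c\|_{Q_h}^2$.

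For the zero-mean part, $q_S^0 \in Q_{S,h}/\mathbb{R}$ and $q_D^0 \in Q_{D,h}/\mathbb{R}$, so the second parts of \Cref{ass:ass1} and \Cref{ass:ass2} apply and the construction of \Cref{lemma: NN} goes through verbatim. This produces $\mathbf{v}^0 \in \mathbf{V}_h$ with vanishing normal flux across $\Gamma$ such that $b_h(\mathbf{v}^0, q^0) = \|q^0\|_{Q_h}^2$ and $\|\mathbf{v}^0\|_{V_h} \lesssim \|q^0\|_{Q_h}$, with parameter-robust constants. Because $\mathbf{v}^0$ has zero flux on all of $\partial \Omega_S$ and $\partial \Omega_D$, its discrete divergence integrates to zero on each subdomain, whence $b_h(\mathbf{v}^0, q^c) = 0$: the constant part is invisible to $\mathbf{v}^0$.

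For the constant part I would invoke \Cref{lem: construction} with $\bar f = (q_S, 1)_S = -(q_D, 1)_D$ to obtain $\mathbf{v}^\Gamma$, and set $\mathbf{v}^c := s\,\mathbf{v}^\Gamma$ with a scalar $s$ to be chosen. Since $q^c$ is piecewise constant, $b_h(\mathbf{v}^c, q^c)$ depends only on the transferred flux and equals $s\,\bar f^2 (|\Omega_S|^{-1} + |\Omega_D|^{-1})$ up to sign, while \eqref{eq: bound on vGamma} gives $\|\mathbf{v}^c\|_{V_h}^2 \lesssim s^2(\omega_S \mu + \omega_D {K}^{-1})\bar f^2$. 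The crux of the whole lemma is that, writing $w_S = \omega_S^{-1}\mu^{-1}|\Omega_S|$ and $w_D = \omega_D^{-1}{K}|\Omega_D|$, minimization over the additive constant gives $\|q^c\|_{Q_h}^2 = \frac{w_S w_D}{w_S + w_D}\bar f^2 (|\Omega_S|^{-1} + |\Omega_D|^{-1})^2$, and a direct Cauchy--Schwarz balancing shows that the product $\frac{w_S w_D}{w_S + w_D}(\omega_S \mu + \omega_D {K}^{-1})$ is bounded above and below by constants depending only on $|\Omega_S|$ and $|\Omega_D|$, hence independent of $\mu$, $K$, $\omega_S$, $\omega_D$. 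This is precisely the mechanism absent in \Cref{lemma: NE} and \Cref{lemma: EN}, where the non-quotient norm leaves an unbalanced factor $\sqrt{\mu K}$. Choosing $s \sim \frac{w_S w_D}{w_S + w_D}(|\Omega_S|^{-1} + |\Omega_D|^{-1})$ then yields both $b_h(\mathbf{v}^c, q^c) \gtrsim \|q^c\|_{Q_h}^2$ and $\|\mathbf{v}^c\|_{V_h} \lesssim \|q^c\|_{Q_h}$ with robust constants.

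Finally I would combine the pieces as $\mathbf{v} = \mathbf{v}^0 + \delta\,\mathbf{v}^c$ with a fixed small $\delta$. The only remaining coupling is the cross term $b_h(\mathbf{v}^c, q^0)$, which is nonzero because $\mathbf{v}^c$ carries divergence; I bound it by continuity, $|b_h(\mathbf{v}^c, q^0)| \le \|\mathbf{v}^c\|_{V_h}\|q^0\|_{Q_h} \lesssim \|q^c\|_{Q_h}\|q^0\|_{Q_h}$, and absorb it with Young's inequality, picking $\delta$ small enough that $b_h(\mathbf{v}, q) \gtrsim \|q^0\|_{Q_h}^2 + \|q^c\|_{Q_h}^2 = \|q\|_{Q_h}^2$ while $\|\mathbf{v}\|_{V_h} \lesssim \|q\|_{Q_h}$, giving the stated robust inf-sup bound. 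I expect the main obstacle to be the constant mode: naively transferring mass across $\Gamma$ reproduces the parameter-dependent scaling of the mixed cases, and the key insight is that, measured in the quotient norm, this scaling is exactly balanced so that parameter robustness is recovered.
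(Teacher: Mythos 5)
Your reduction of the zero-mean part to \Cref{lemma: NN}, and the observations that $b_h(\mathbf{v},\mathbf{1})=0$ on $\mathbf{V}_h$ and $b_h(\mathbf{v}^0,q^c)=0$, are all correct. The gap is in the constant mode: your argument rests on a norm for $Q_h/\mathbb{R}$ that is not the one in the statement, and with the correct norm the key estimate fails. The paper defines $Q_h/\mathbb{R}$ as the subspace of zero \emph{global} mean equipped with the restriction of $\|\cdot\|_{Q_h}$ from \eqref{p-norm} (this is also the norm realized by the preconditioner block $I_p$), not with the quotient norm $\inf_{c}\|q-c\|_{Q_h}$. You in fact use both: the orthogonal splitting $\|q\|_{Q_h}^2=\|q^0\|_{Q_h}^2+\|q^c\|_{Q_h}^2$ holds for the zero-global-mean representative, whereas your formula $\|q^c\|_{Q_h}^2=\tfrac{w_Sw_D}{w_S+w_D}\bar f^2(|\Omega_S|^{-1}+|\Omega_D|^{-1})^2$ is the minimized (quotient) norm. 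For the representative one gets instead $\|q^c\|_{Q_h}^2=\bar f^2\left(\omega_S^{-1}\mu^{-1}|\Omega_S|^{-1}+\omega_D^{-1}K|\Omega_D|^{-1}\right)$, and the ratio of the two is of order $2+\tfrac{\omega_S}{\omega_D}\mu K+\tfrac{\omega_D}{\omega_S}(\mu K)^{-1}$, which is unbounded. The harmonic-mean cancellation you identify as the crux is real, but it cancels against the quotient norm only.

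With the correct norm the constant-mode step breaks: for $\mathbf{v}^c=s\,\mathbf{v}^\Gamma$ the ratio
\begin{align*}
\frac{b_h(\mathbf{v}^c,q^c)}{\|\mathbf{v}^c\|_{V_h}\,\|q^c\|_{Q_h}}
\end{align*}
is independent of $s$ and is, up to domain-size constants, $\bigl[(\omega_S\mu+\omega_DK^{-1})(\omega_S^{-1}\mu^{-1}+\omega_D^{-1}K)\bigr]^{-1/2}\sim\min\{\sqrt{\mu K},1/\sqrt{\mu K}\}$, i.e.\ it degenerates in \emph{both} parameter limits, worse than either mixed case. So no choice of $s$ yields $b_h(\mathbf{v}^c,q^c)\gtrsim\|q^c\|_{Q_h}^2$ and $\|\mathbf{v}^c\|_{V_h}\lesssim\|q^c\|_{Q_h}$ simultaneously with robust constants. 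The paper avoids this trap by never measuring the constant mode against its own norm: it reuses the constructions of \Cref{lemma: NE} and \Cref{lemma: EN}, in which the $\mathbf{v}^\Gamma$-contribution is absorbed into the full weighted pressure norm of one subdomain at the cost of a factor $\max\{\sqrt{\mu K},1\}$ or $\max\{1/\sqrt{\mu K},1\}$, and then splits into the regimes $\mu K\le 1$ and $\mu K>1$ so that the loss factor equals one in each. Your argument as written proves the inf-sup bound only with respect to the weaker quotient norm; to obtain the lemma as stated you need either this case split or a uniform equivalence of the two norms, which does not hold.
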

\begin{proof}
    In this case, both subproblems have essential boundary conditions and we assume a given pressure $q = (q_S, q_D) \in Q_h / \mathbb{R}$. In other words, we have
    \begin{align}
        (q_S, 1)_S = - (q_D, 1)_D.
    \end{align}
    The construction from \Cref{lem: construction} therefore produces a $\mathbf{v}^\Gamma$ that handles both constants simultaneously. The subsequent steps proceed exactly as in \Cref{lemma: NE} or \Cref{lemma: EN}, with the only difference being the scaling in the augmented pressures and test functions. If $\mu {K} \le 1$, we follow the strategy from \Cref{lemma: NE}; otherwise, if $\mu {K} > 1$, we use the construction from \Cref{lemma: NE}. In both cases, we obtain a lower bound that is independent of material parameters.
\end{proof}

Under the above assumptions and lemmas, the following theorem shows the well-posedness of the coupled linear system \eqref{finite-element-scheme}.
\begin{theorem}\label{thm:well-posedness}
     The saddle point problem \eqref{finite-element-scheme} is well-posed in $\mathbf{V}_h\times Q_h$, endowed with the norm \eqref{norm}.
     The stability and continuity constants are independent of $\mu$ and ${K}$ for \textbf{Case NN} and \textbf{Case EE}. However, for \textbf{Case NE} and \textbf{Case EN}, the stability constant depends on the product $\mu {K}$, as illustrated in \Cref{lemma: NE} and \Cref{lemma: EN}.
\end{theorem}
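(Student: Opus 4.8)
The plan is to invoke the classical Brezzi theory for saddle point problems \cite{boffi2013mixed}, which guarantees well-posedness of \eqref{finite-element-scheme} once three ingredients are in place: boundedness of $a_h$ and $b_h$, coercivity of $a_h$ on the kernel $\mathbf{Z}_h := \{\mathbf{v}\in\mathbf{V}_h : b_h(\mathbf{v},q)=0 \ \forall\,q\in Q_h\}$, and the inf-sup condition on $b_h$. I would first observe that all three ingredients are already established in the preceding results, so that the proof reduces to collecting them and carefully tracking the constants.

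Concretely, I would proceed as follows. The continuity bounds \eqref{a-continuity} and \eqref{b-continuity} of \Cref{lemma-1} give boundedness of $a_h$ and $b_h$ with constant $1$ in the weighted norms $\|\cdot\|_{V_h}$ and $\|\cdot\|_{Q_h}$; by the very design of these norms in \eqref{eqs: norms v and Q}, these constants are independent of $\mu$, $K$, $h$, and the boundary configuration, which is exactly why continuity remains parameter-free in all four cases. The kernel coercivity \eqref{a-coercivity} supplies the second Brezzi condition, again with constant $1$. For the inf-sup condition, I would select the appropriate result among \Cref{lemma: NN,lemma: NE,lemma: EN,lemma: EE} according to the boundary case, taking care that in \textbf{Case EE} the pressure is sought in the quotient space $Q_h/\mathbb{R}$ as dictated by \Cref{lemma: EE}.

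With these three conditions verified, the abstract theory yields a unique solution together with the stability estimate
\begin{align*}
    \enorm{(\mathbf{u}, p)} \lesssim \sup_{(\mathbf{v},q)\in\mathbf{V}_h\times Q_h} \frac{a_h(\mathbf{u},\mathbf{v})+b_h(\mathbf{v},p)+b_h(\mathbf{u},q)}{\enorm{(\mathbf{v},q)}},
\end{align*}
where the hidden constant is a rational function of the coercivity, continuity, and inf-sup constants. Since the first two are parameter-free, the parameter dependence of the well-posedness constant is inherited entirely from the inf-sup constant of $b_h$. Hence for \textbf{Case NN} and \textbf{Case EE} the stability and continuity constants are independent of $\mu$ and $K$, whereas for \textbf{Case NE} and \textbf{Case EN} the stability constant degrades like $\min\{1/\sqrt{\mu K},1\}$ and $\min\{\sqrt{\mu K},1\}$ respectively, matching \Cref{lemma: NE} and \Cref{lemma: EN}.

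I expect the main (and essentially the only) difficulty to be bookkeeping rather than analysis: one must confirm that the boundedness and kernel-coercivity constants are genuinely parameter-robust—which is guaranteed by the weighted norm \eqref{eqs: norms v and Q}—so that the degradation in \textbf{Case NE} and \textbf{Case EN} is attributable solely to the inf-sup constant and not to a hidden amplification through the abstract Brezzi constant. A secondary point is to state the result uniformly across the four cases while correctly using $Q_h/\mathbb{R}$ in the pure-essential case; otherwise the inf-sup supremum would be taken over a space on which $b_h(\cdot,q)$ cannot control the constant mode of $q$, and the argument would break down.
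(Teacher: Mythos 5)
Your proposal is correct and follows exactly the route the paper intends: the theorem is stated as a direct consequence of Brezzi theory, with continuity and kernel coercivity supplied by \Cref{lemma-1} (with constants equal to $1$ by design of the weighted norms) and the inf-sup condition supplied case-by-case by \Cref{lemma: NN,lemma: NE,lemma: EN,lemma: EE}, so that all parameter dependence is inherited from the inf-sup constant. Your bookkeeping remarks, including the use of $Q_h/\mathbb{R}$ in \textbf{Case EE}, match the paper's treatment.
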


The following corollaries show the well-posedness results for the two finite-element examples from \Cref{sec-conforming,sec-nonconforming}. According to the proof of \cref{lemma: NE}, we just need to verify \cref{ass:ass1} and \cref{ass:ass2}.  
\begin{corollary}\label{corollary-1}
For the $H(\operatorname{div})$-conforming element in \cref{sec-conforming}, the system \eqref{finite-element-scheme} is well-posed in $\mathbf{V}_h\times Q_h$.
\end{corollary}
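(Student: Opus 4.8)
The plan is to reduce the corollary to the two stability hypotheses \cref{ass:ass1,ass:ass2}. The continuity and kernel-coercivity bounds of \cref{lemma-1} hold for any choice of the spaces, the interface construction of \cref{lem: construction} is established directly for the present $H(\operatorname{div})$-conforming spaces in its proof, and the four inf-sup lemmas culminating in \cref{thm:well-posedness} take \cref{ass:ass1,ass:ass2} as their only remaining element-dependent input. It therefore suffices to verify that, for the $\mathbb{P}_1\oplus\mathbb{RT}_0$-$\mathbb{P}_0$ Stokes pair and the $\mathbb{RT}_0$-$\mathbb{P}_0$ Darcy pair, the discrete inf-sup conditions \eqref{inf-sup-Stokes} and \eqref{inf-sup-Darcy} hold with constants independent of $h$ (and of $\mu,\beta_\tau$ for the former, of $K$ for the latter); well-posedness then follows verbatim from \cref{thm:well-posedness}.

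For \cref{ass:ass2} I would simply observe that the Darcy pair is the classical lowest-order Raviart--Thomas element, for which inf-sup stability in the $\mathbf{H}(\operatorname{div})$-norm is standard \cite{boffi2013mixed}. Since here $\|\mathbf{v}_D\|_{V_{D,h}}=\|\mathbf{v}_D\|_D$ and neither $b_{D,h}$ nor this norm sees $K$, the constant $\gamma_D$ is automatically $K$-independent, while $h$-independence is the usual consequence of the commuting $\mathbb{RT}_0$ interpolant. The extra constraint $\mathbf{n}\cdot\mathbf{v}_D|_\Gamma=0$ merely makes this the stability of $\mathbb{RT}_0$-$\mathbb{P}_0$ for a Darcy problem with essential data on $\Gamma\cup\Gamma_D^E$ (posed on $Q_{D,h}/\mathbb{R}$ when $|\Gamma_D^N|=0$), which is again classical.

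The real content is \cref{ass:ass1}, proved in \cite{Lv2024A}; the obstacle is that $\mathbb{P}_1$-$\mathbb{P}_0$ is itself inf-sup unstable, so stability must be supplied entirely by the $\mathbb{RT}_0$ enrichment. I would realize the verification through a Fortin operator. Given $q_S\in Q_{S,h}$ (mean-free when $|\Gamma_S^N|=0$), the continuous Stokes inf-sup on $\Omega_S$ with essential data on $\Gamma\cup\Gamma_S^E$ yields $\mathbf{w}$ vanishing there with $\nabla\cdot\mathbf{w}=q_S$ and $\|\mathbf{w}\|_{1,\Omega_S}\lesssim\|q_S\|_S$; I then set $\Pi_h\mathbf{w}:=I_h\mathbf{w}+\Pi_h^R(\mathbf{w}-I_h\mathbf{w})\in\mathbf{V}_{S,h}$, where $I_h$ is a Scott--Zhang interpolant into $\mathbf{V}_{S,h}^l$ preserving the homogeneous boundary values. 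The commuting property gives $\nabla_h\cdot\Pi_h\mathbf{w}=P_{S,h}\nabla\cdot\mathbf{w}=q_S$, so the numerator of \eqref{inf-sup-Stokes} equals $\|q_S\|_S^2$, and $\Pi_h\mathbf{w}$ vanishes on $\Gamma$ so that it is admissible in the restricted supremum.

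The step I expect to be delicate is the bound $\|\Pi_h\mathbf{w}\|_{V_{S,h}}\lesssim\|q_S\|_S$ in the nonstandard norm $a_{S,h}(\cdot,\cdot)^{1/2}$, because the enrichment term acts as an inverse-scaled, $h^{-2}\|\cdot\|_{L^2}^2$-type penalty on the $\mathbb{RT}_0$ component, so that the Raviart--Thomas interpolant of $\mathbf{w}$ itself would not be uniformly bounded. The correction $\Pi_h^R(\mathbf{w}-I_h\mathbf{w})$ is chosen precisely to defeat this: its facet fluxes $\int_e(\mathbf{w}-I_h\mathbf{w})\cdot\mathbf{n}_e$ are $O(h^{d/2}|\mathbf{w}|_{H^1(\omega_e)})$ by Scott--Zhang approximation and trace estimates, exactly compensating the $\|\nabla\cdot\bm{\phi}_e\|_T^2\sim h^{-d}$ scaling so that the enrichment term is controlled by $|\mathbf{w}|_{H^1}^2\lesssim\|q_S\|_S^2$. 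The strain term is bounded by $\|I_h\mathbf{w}\|_{1,\Omega_S}^2$, while the BJS and interface coupling terms vanish since $\Pi_h\mathbf{w}=0$ on $\Gamma$; one must still check that the $\gamma'/h$ and the two cross interface terms remain controlled on the range of $\Pi_h$, which holds for the coercivity-preserving choice of $\gamma'$. As $a_{S,h}$ contains neither $\mu$ nor, on these admissible test functions, $\beta_\tau$, the constant $\gamma_S$ is parameter-independent, completing the verification of \cref{ass:ass1} and hence the proof.
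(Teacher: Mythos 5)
Your proposal is correct and follows the same overall strategy as the paper: both reduce the corollary to verifying \cref{ass:ass1} and \cref{ass:ass2} and then invoke \cref{thm:well-posedness}. The difference lies entirely in how \cref{ass:ass1} is verified. The paper's proof is essentially a two-line citation: it appeals to \cite[Sec.~3.3]{Li2022A} for the inf-sup stability of the $\mathbb{P}_1\oplus\mathbb{RT}_0$--$\mathbb{P}_0$ pair in the $H^1$-norm, and then observes that for admissible test functions with $\mathbf{v}_S|_\Gamma = 0$ all interface terms in $a_{S,h}$ drop out, leaving $\|\mathbf{v}_S\|_{V_{S,h}}^2 \le C\|\mathbf{v}_S\|_1^2$; since the denominator in \eqref{inf-sup-Stokes} is thereby weakened, the discrete-norm inf-sup follows immediately from the $H^1$-norm one. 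You instead construct a Fortin operator $\Pi_h\mathbf{w} = I_h\mathbf{w} + \Pi_h^R(\mathbf{w}-I_h\mathbf{w})$ from scratch and bound it directly in the $\|\cdot\|_{V_{S,h}}$-norm, which amounts to reproving the cited stability result; your accounting of the enrichment term (the $h^{d/2}$ scaling of the corrected facet fluxes against the $h^{-d}$ scaling of $\|\nabla\cdot\bm{\phi}_e\|_T^2$) is the substantive content of that reproof and is carried out correctly. Your route is self-contained and makes explicit why the $\mathbb{RT}_0$ enrichment rescues the unstable $\mathbb{P}_1$--$\mathbb{P}_0$ pair, at the cost of redoing work the paper outsources to \cite{Li2022A}; the paper's norm-comparison shortcut is the cleaner way to pass from the known $H^1$-stability to stability in the nonstandard norm, and you could have used it to avoid the delicate step you flag. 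The Darcy-side verification and the parameter-independence arguments are the same in both.
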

\begin{proof}
    For \cref{ass:ass1},
    $\mathbb{P}_1\oplus\mathbb{RT}_0$-$\mathbb{P}_0$ is a Stokes-stable pair (cf.~\cite[Sec.~3.3]{Li2022A}). Specifically, for $\mathbf{v}_S|_{\Gamma} = 0$, according to \cite[Lem.~3.6]{Li2022A}, there holds
    \begin{align*}
        & \|\mathbf{v}_S\|_{V_{S,h}}^2 = 2(\bm{\epsilon}(\mathbf{u}_S^l), \bm{\epsilon}(\mathbf{u}_S^l ))_S  +2 \sum_{T \in \mathcal{T}_h^S} \sum_{e \in \mathcal{E}_h(\bar{\Omega}_S)} \alpha_T u_e v_e(\nabla \cdot \bm{\phi}_e, \nabla \cdot \bm{\phi}_e)_T \leq C \|\mathbf{v}_S\|_{1}^2,
    \end{align*}
    where $C$ is independent of the physical and material parameters. So the inf-sup condition \eqref{inf-sup-Stokes} holds. 
    
    On the other hand, \cref{ass:ass2} is satisfied because the Raviart-Thomas is a Darcy-stable pair \cite[Sec.~7.1.2]{boffi2013mixed}. 
    Therefore, the $H(\operatorname{div})$-conforming finite-element method is stable with respect to the weighted norm \cref{norm}.
\end{proof}
\begin{corollary}\label{corollary-2}
For the nonconforming element in \cref{sec-nonconforming}, the system \eqref{finite-element-scheme} is well-posed in $\mathbf{V}_h\times Q_h$.
\end{corollary}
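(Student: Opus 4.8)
The plan is to mirror the structure of \Cref{corollary-1}: by \Cref{thm:well-posedness}, well-posedness for all four boundary-condition cases reduces to verifying the two discrete inf-sup conditions, namely \Cref{ass:ass1} for the Stokes pair $\mathbb{CR}_0$-$\mathbb{P}_0$ and \Cref{ass:ass2} for the Darcy pair $\mathbb{CR}_0$-$\mathbb{P}_0$. The continuity and coercivity estimates of \Cref{lemma-1} hold with universal constants regardless of the discretization, so no extra work is needed there. Thus the entire content of the corollary is the stability of the two subproblems in the nonconforming norms $\|\cdot\|_{V_{S,h}}$ and $\|\cdot\|_{V_{D,h}}$ induced by the bilinear forms $a_{S,h}$ and $a_{D,h}$ of \Cref{sec-nonconforming}.

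First I would verify \Cref{ass:ass1}. The Crouzeix--Raviart element is the classical nonconforming Stokes-stable pair, so the discrete inf-sup condition for $(\nabla_h \cdot \mathbf{v}_S, q_S)$ against the broken $H^1$ seminorm $\left(\sum_{T} \|\bm{\epsilon}(\mathbf{v}_S)\|_T^2\right)^{1/2}$ is standard (this follows from the fact that the $\mathbb{CR}_0$ interpolant preserves elementwise mean divergence, giving a Fortin operator). I then need to confirm that the norm $\|\mathbf{v}_S\|_{V_{S,h}}^2 = a_{S,h}(\mathbf{v}_S,\mathbf{v}_S)$ is equivalent, up to parameter-independent constants, to that seminorm. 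Since $a_{S,h}$ augments the strain term with the BJS boundary term $\widetilde\beta_\tau \|\mathbf{v}_S\cdot\bm\tau\|_\Gamma^2$ and the jump-penalty term $\sum_e h_e^{-1}\|[\![\mathbf{v}_S]\!]_e\|_e^2$, I must invoke a discrete Korn inequality for $\mathbb{CR}_0$ (e.g.\ Brenner's nonconforming Korn inequality) to control the broken gradient by the strain-plus-jump terms. This gives $\|\mathbf{v}_S\|_{V_{S,h}} \gtrsim$ the broken $H^1$ seminorm, whence \eqref{inf-sup-Stokes} follows with $\gamma_S$ independent of $h$, $\mu$, and $\beta_\tau$.

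Next I would verify \Cref{ass:ass2}. The $\mathbb{CR}_0$-$\mathbb{P}_0$ pair for Darcy is likewise stable: the elementwise divergence of a Crouzeix--Raviart vector field is constant and can reproduce any $\mathbb{P}_0$ pressure, again yielding a Fortin-type operator that preserves the mean normal flux on each facet. The denominator in \eqref{inf-sup-Darcy} is $\left(\|\mathbf{v}_D\|_{V_{D,h}}^2 + \|\nabla_h\cdot\mathbf{v}_D\|_D^2\right)^{1/2}$, and since $\|\mathbf{v}_D\|_{V_{D,h}}^2 = \|\mathbf{v}_D\|_D^2 + \sum_e h_e^{-1}\|[\![\mathbf{v}_D\cdot\mathbf{n}_e]\!]_e\|_e^2$ already contains the $L^2$ mass term, this norm dominates the $\mathbf{H}(\operatorname{div})$-type norm up to parameter-independent constants, so the standard mixed stability argument applies and $\gamma_D$ is independent of $h$ and $K$.

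The main obstacle is the nonconformity: because $\mathbf{V}_{S,h} \not\subset [H^1]^d$ and $\mathbf{V}_{D,h}\not\subset \mathbf{H}(\operatorname{div})$, the inf-sup and norm-equivalence arguments cannot rely on continuous trace or Korn inequalities directly but require their broken (discrete) counterparts together with the stabilizing jump penalties, and one must check carefully that the constant hidden in the discrete Korn inequality and in the Fortin operators does not degrade with the mesh or with the material coefficients. Once these discrete inequalities are in place, \Cref{ass:ass1} and \Cref{ass:ass2} hold, and the result follows immediately from \Cref{thm:well-posedness}.
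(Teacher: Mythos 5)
Your overall strategy---reduce the corollary to verifying \Cref{ass:ass1} and \Cref{ass:ass2} for the $\mathbb{CR}_0$-$\mathbb{P}_0$ pairs and then invoke classical Crouzeix--Raviart stability---matches the paper's proof. Two points of comparison. First, for \Cref{ass:ass2} you argue Darcy stability directly via a Fortin-type operator; the paper instead uses the shortcut that any Stokes-stable pair is automatically Darcy-stable, since the denominator $\bigl(\|\mathbf{v}_D\|_{V_{D,h}}^2+\|\nabla_h\cdot\mathbf{v}_D\|_D^2\bigr)^{1/2}$ in \eqref{inf-sup-Darcy} is weaker than the discrete $H^1$-type norm, so \Cref{ass:ass2} is inherited from \Cref{ass:ass1} with no additional work. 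Both routes are valid; the paper's is shorter, yours is self-contained. Second, and more substantively, you lean on the wrong inequality for \Cref{ass:ass1}. To pass from the classical CR inf-sup condition (broken $H^1$ norm in the denominator) to \eqref{inf-sup-Stokes} (with $\|\cdot\|_{V_{S,h}}$ in the denominator) one needs the \emph{upper} bound $\|\mathbf{v}_S\|_{V_{S,h}}\lesssim \|\mathbf{v}_S\|_{1}$: replacing the denominator by something smaller can only increase the supremum. The discrete Korn inequality you emphasize gives the \emph{lower} bound $\|\mathbf{v}_S\|_{V_{S,h}}\gtrsim |\mathbf{v}_S|_{1,h}$, which points the wrong way for this purpose and is in fact not needed anywhere in the corollary, because coercivity in \Cref{lemma-1} is automatic from the definition $\|\mathbf{v}_S\|_{V_{S,h}}^2 = a_{S,h}(\mathbf{v}_S,\mathbf{v}_S)$. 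The paper proves exactly the upper bound, using that the strain term is dominated by the broken gradient and that the jump penalty $\sum_e h_e^{-1}\|[\![\mathbf{v}_S]\!]_e\|_e^2$ is controlled by the broken $H^1$ norm (the CR jumps have zero mean on each edge). Since you do claim full norm equivalence, the required half is implicitly present, but the mechanism you single out would not by itself deliver \eqref{inf-sup-Stokes}; you should make the upper bound explicit.
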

\begin{proof}
     For \cref{ass:ass1}, $\mathbb{CR}_0$-$\mathbb{P}_0$ is a stable Stokes pair (cf.~\cite{Crouzeix1973Conforming}), and for $\mathbf{v}_S|_{\Gamma} = 0$, it is clear that 
     \begin{align*}
        & \|\mathbf{v}_S\|_{V_{S,h}}^2   = 2\sum_{T\in\mathcal{T}_h^S}(\bm{\epsilon}(\mathbf{v}_S),\bm{\epsilon}(\mathbf{v}_S))_T + \sum_{e\in\mathcal{E}_h^0(\Omega_S)} \int_{e} \frac{1}{h_e} [\![\mathbf{v}_S]\!]_e ~ [\![\mathbf{v}_S]\!]_e \mathrm{~d}s \leq C\|\mathbf{v}_S\|_{1}^2,
     \end{align*}
     where $C$ is independent of the physical and discretization parameters. The inf-sup condition \eqref{inf-sup-Stokes} holds.

     For \cref{ass:ass2}, note that all Stokes-stable pairs are Darcy stable. This is because the (broken) $H(\operatorname{div})$ norm is weaker than the discrete $H^1$-type norm $\| \cdot \|_{V_{S,h}}$. Thus, \cref{ass:ass2} follows from \Cref{ass:ass1} in this case.
   In conclusion, the nonconforming finite-element method is well-posed with respect to the norm \cref{norm}.
\end{proof}

\section{Parameter-Robust Preconditioner}\label{sec-preconditioner}
In this section, we develop a robust preconditioner for the linear system based on its well-posedness, leveraging the operator preconditioning framework~\cite{doi:10.1137/S1064827502418203,Kent2011Preconditioning}. The key idea of this framework is that the well-posedness of the system ensures that the linear operator is an isomorphism between the original Hilbert space and its dual. Consequently, any isomorphism from the dual space back to the Hilbert space can serve as an effective preconditioner.

A natural choice for such an isomorphism on a Hilbert space $\mathbf{X}$ is the Riesz operator $\mathcal{B}: \mathbf{X}' \to \mathbf{X}$ defined by $\langle \mathcal{B}^{-1} \mathbf{x}, \mathbf{y} \rangle = ( \mathbf{x}, \mathbf{y})_{\mathbf{X}}$, $\forall \,\mathbf{x}, \mathbf{y} \in \mathbf{X}$,
where $\langle \cdot, \cdot \rangle$ denotes the duality pairing between $\mathbf{X}'$ and $\mathbf{X}$ and $(\cdot, \cdot)_{\mathbf{X}}$ is the inner product on $\mathbf{X}$.

In our case, we have $\mathbf{X} = \mathbf{V}_h\times Q_h$ and let $(\cdot, \cdot)_{\mathbf{X}}$ be the inner product that induces the weighted norm $\enorm{\cdot}$ from \eqref{norm}. Using the operator preconditioning framework, we now propose the following $2\times 2$ block diagonal preconditioner,
\begin{equation}\label{preconditioner}
	\mathcal{B} = 
	\left(
	\begin{array}{cc}
		A_{\mathbf{u}} + D_{\mathbf{u}} &  \\
		& I_p  \\
	\end{array}
	\right)^{-1},
\end{equation}
with $A_{\mathbf{u}}$ from \eqref{eq: saddlepoint A} and the remaining blocks corresponding to
\begin{align*} 
  \omega_S\mu\|P_{S,h}(\nabla_h\cdot \mathbf{u}_S)\|_S^2 + \omega_D{K}^{-1}\|\nabla_h\cdot \mathbf{u}_D\|^2_{D}  &\mapsto D_{\mathbf{u}},\\ 
	\omega_S^{-1}\mu^{-1}\|p_S\|_S^2 + \omega_D^{-1}{K}\|p_D\|_{D}^2 &\mapsto I_p.
\end{align*}
We emphasize that the preconditioner $\mathcal{B}$ does not involve any fractional parts. 

For the analysis, we introduce the short-hand notation $\mathbf{x} = (\mathbf{u},p), \mathbf{y} = (\mathbf{v},q)$. Then the operator $\mathcal{A}: \mathbf{X} \to \mathbf{X}'$ from \cref{eq: saddlepoint A} corresponds to the composite bilinear form
\begin{align*}
  \langle \mathcal{A} \mathbf{x}, \mathbf{y} \rangle 
  = a_h(\mathbf{u},\mathbf{v}) + b_h(\mathbf{v},p) + b_h(\mathbf{u},q).
\end{align*}
\cref{thm:well-posedness} then shows that the operator $\mathcal{A}$ satisfies the following bounds
\begin{align}
  \sup_{\mathbf{x, y}\in \mathbf{X}} \langle \mathcal{A} \mathbf{x}, \mathbf{y} \rangle &\leq C \enorm{\mathbf{x}} \enorm{\mathbf{y}}, &
  \inf_{\mathbf{x}\in \mathbf{X}} \sup_{\mathbf{y}\in \mathbf{X}} \frac{\langle \mathcal{A} \mathbf{x}, \mathbf{y} \rangle}{\enorm{\mathbf{x}} \enorm{\mathbf{y}} } &\geq \gamma, \label{bounds A}
\end{align}
for constants $0 < \gamma \le C < \infty$.

We begin by analyzing \textbf{Case NN} and \textbf{Case EE}, as \cref{thm:well-posedness} establishes that the constants in \eqref{bounds A} are independent of both physical and discretization parameters in these cases. Consequently, following the operator preconditioning framework, the Riesz operator \eqref{preconditioner} is parameter-robust. This result is summarized as follows. 

\begin{theorem}\label{thm:ConditionNumber-Case1&2}
For \textbf{Case NN} and \textbf{Case EE}, under \cref{ass:ass1} and \cref{ass:ass2},  the condition number of the preconditioned operator satisfies 
\begin{equation*}
\kappa(\mathcal{BA}) :=  \|\mathcal{B}\mathcal{A}\|_{\mathbf{X}\mapsto\mathbf{X}}  \|(\mathcal{B}\mathcal{A})^{-1}\|_{\mathbf{X}\mapsto\mathbf{X}}   = \mathcal{O}(1).
 \end{equation*}
\end{theorem}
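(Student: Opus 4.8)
The plan is to invoke the operator preconditioning framework of \cite{Kent2011Preconditioning,doi:10.1137/S1064827502418203} and reduce the estimate on $\kappa(\mathcal{BA})$ to the parameter-independent two-sided bounds \eqref{bounds A}, which \Cref{thm:well-posedness} already supplies for \textbf{Case NN} and \textbf{Case EE}. Let $\mathcal{R}\colon \mathbf{X} \to \mathbf{X}'$ denote the Riesz operator associated with the inner product that induces $\enorm{\cdot}$, i.e. $\langle \mathcal{R}\mathbf{x}, \mathbf{y}\rangle = (\mathbf{x},\mathbf{y})_{\mathbf{X}}$. Matching the block structure of the norm \eqref{norm} against \eqref{u-norm}--\eqref{p-norm} shows that the velocity block of $\mathcal{R}$ is exactly $A_{\mathbf{u}} + D_{\mathbf{u}}$ and its pressure block is $I_p$, so that the preconditioner \eqref{preconditioner} is precisely $\mathcal{B} = \mathcal{R}^{-1}$. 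The one fact I would isolate at the outset is that $\mathcal{R}$ is an isometry, $\|\mathcal{R}\mathbf{x}\|_{\mathbf{X}'} = \enorm{\mathbf{x}}$ (attained in the defining supremum at $\mathbf{y} = \mathbf{x}$ by Cauchy--Schwarz), together with the dual identity $\langle \mathcal{A}\mathbf{x}, \mathbf{y}\rangle = (\mathcal{R}^{-1}\mathcal{A}\mathbf{x}, \mathbf{y})_{\mathbf{X}}$ valid for all $\mathbf{y} \in \mathbf{X}$.

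First I would bound the forward norm. Applying the Riesz characterization $\enorm{\mathbf{z}} = \sup_{\mathbf{y}} (\mathbf{z},\mathbf{y})_{\mathbf{X}}/\enorm{\mathbf{y}}$ with $\mathbf{z} = \mathcal{BA}\mathbf{x}$ and using the dual identity gives $\enorm{\mathcal{BA}\mathbf{x}} = \sup_{\mathbf{y}} \langle \mathcal{A}\mathbf{x}, \mathbf{y}\rangle / \enorm{\mathbf{y}} = \|\mathcal{A}\mathbf{x}\|_{\mathbf{X}'}$. Dividing by $\enorm{\mathbf{x}}$ and taking the supremum over $\mathbf{x}$ yields $\|\mathcal{BA}\|_{\mathbf{X}\mapsto\mathbf{X}} = \|\mathcal{A}\|_{\mathbf{X}\mapsto\mathbf{X}'}$, which the continuity bound in \eqref{bounds A} caps at $C$. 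For the inverse, I would use $(\mathcal{BA})^{-1} = \mathcal{A}^{-1}\mathcal{R}$: given $\mathbf{y}$, set $\mathbf{x} = \mathcal{A}^{-1}\mathcal{R}\mathbf{y}$ so that $\mathcal{A}\mathbf{x} = \mathcal{R}\mathbf{y}$, and the inf-sup bound in \eqref{bounds A} gives $\gamma\,\enorm{\mathbf{x}} \le \|\mathcal{A}\mathbf{x}\|_{\mathbf{X}'} = \|\mathcal{R}\mathbf{y}\|_{\mathbf{X}'} = \enorm{\mathbf{y}}$, whence $\|(\mathcal{BA})^{-1}\|_{\mathbf{X}\mapsto\mathbf{X}} \le \gamma^{-1}$. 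Multiplying the two estimates produces $\kappa(\mathcal{BA}) \le C/\gamma$, and since \Cref{lemma: NN}, \Cref{lemma: EE}, and \Cref{thm:well-posedness} certify that $C$ and $\gamma$ are independent of $\mu$, $K$, and $h$ in these two cases, we conclude $\kappa(\mathcal{BA}) = \mathcal{O}(1)$.

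The computation is entirely routine once the isometry of $\mathcal{R}$ is in place, so there is no genuine analytic obstacle here; the substantive work was already carried out upstream in establishing the parameter-robust Brezzi constants. The only point demanding care is bookkeeping: one must feed the \emph{composite} saddle-point bounds \eqref{bounds A} into the framework rather than the separate coercivity estimate of \Cref{lemma-1} and the inf-sup estimates of \Cref{lemma: NN} and \Cref{lemma: EE}, and verify that well-posedness via the Brezzi theory indeed upgrades those ingredients to the stated two-sided control of $\langle \mathcal{A}\cdot,\cdot\rangle$ on the full product space $\mathbf{X} = \mathbf{V}_h \times Q_h$. With that identification in hand, the bound $\kappa(\mathcal{BA}) \le C/\gamma$ follows immediately.
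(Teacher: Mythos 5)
Your proposal is correct and follows essentially the same route as the paper: both identify $\mathcal{B}$ as the Riesz map for the $\enorm{\cdot}$-inner product, use the isometry $\enorm{\mathcal{BA}\mathbf{x}} = \sup_{\mathbf{y}}\langle\mathcal{A}\mathbf{x},\mathbf{y}\rangle/\enorm{\mathbf{y}}$ to reduce $\|\mathcal{BA}\|_{\mathbf{X}\mapsto\mathbf{X}}$ and $\|(\mathcal{BA})^{-1}\|_{\mathbf{X}\mapsto\mathbf{X}}$ to the continuity and inf-sup constants in \eqref{bounds A}, and conclude $\kappa(\mathcal{BA}) \le C/\gamma$. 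Your handling of the inverse via $(\mathcal{BA})^{-1} = \mathcal{A}^{-1}\mathcal{R}$ is just a repackaging of the paper's $\|(\mathcal{BA})^{-1}\|^{-1} = \inf_{\mathbf{x}}\sup_{\mathbf{y}}\langle\mathcal{A}\mathbf{x},\mathbf{y}\rangle/(\enorm{\mathbf{x}}\enorm{\mathbf{y}})$ computation.
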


\begin{proof}
Based on \eqref{bounds A}, we have
\begin{align*}
	&\|\mathcal{B}\mathcal{A}\|_{\mathbf{X}\mapsto\mathbf{X}}
  := \sup_{\mathbf{x} \in\mathbf{X}} \frac{ \enorm{\mathcal{B}\mathcal{A}\bm{x}}}{ \enorm{\bm{x}}} 
  = \sup_{\mathbf{x},\mathbf{y}\in\mathbf{X}} \frac{(\mathcal{B}\mathcal{A}\bm{x},\bm{y})_{\mathbf{X}}}{ \enorm{\bm{x}} \enorm{\bm{y}}} 
	= \sup_{\mathbf{x},\mathbf{y}\in\mathbf{X}}\frac{\langle \mathcal{A} \mathbf{x}, \mathbf{y} \rangle}{ \enorm{\bm{x}} \enorm{\bm{y}}}
	\leq C,
\end{align*}
and 
\begin{align*}
	\|(\mathcal{B}\mathcal{A})^{-1}\|^{-1}_{\mathbf{X}\mapsto\mathbf{X} } & = \inf_{\bm{x}\in\mathbf{X}}\frac{\enorm{\mathcal{B}\mathcal{A}\bm{x}} }{ \enorm{\bm{x}} } 
	= \inf_{\bm{x}\in\mathbf{X}}\sup_{\bm{y}\in\mathbf{X}} \frac{(\mathcal{B}\mathcal{A}\bm{x},\bm{y})_{\mathbf{X}}}{ \enorm{\bm{x}}  \enorm{\bm{y}} } = \inf_{\bm{x}\in\mathbf{X}}\sup_{\bm{y}\in\mathbf{X}} \frac{\langle \mathcal{A} \mathbf{x}, \mathbf{y} \rangle}{ \enorm{\bm{x}}  \enorm{\bm{y}} } \geq \gamma. \nonumber
\end{align*}
Thus, 
$\kappa(\mathcal{BA}) = \|\mathcal{B}\mathcal{A}\|_{\mathbf{X}\mapsto\mathbf{X}}  \|(\mathcal{B}\mathcal{A})^{-1}\|_{\mathbf{X}\mapsto\mathbf{X}} \leq C/\gamma = \mathcal{O}(1)$.
\end{proof} 

For the \textbf{Case NE} and \textbf{Case EN}, as shown in \cref{lemma: NE,lemma: EN}, the inf-sup constant depends on $\mu$ and $K$, though it remains independent of the mesh size. Consequently, $\kappa(\mathcal{BA})$ depends on $\mu$ and $K$. However, a closer examination of the proofs of \cref{lemma: NE,lemma: EN} reveals that, by considering the quotient spaces 
\begin{align*}
  \mathbf{X} / \mathbb{R}_S &:= \mathbf{V}_h \times (Q_{S, h} / \mathbb{R} , Q_{D, h}), & 
  \mathbf{X} / \mathbb{R}_D &:= \mathbf{V}_h \times (Q_{S, h} , Q_{D, h} / \mathbb{R}),
\end{align*}
the inf-sup constant becomes independent of the physical parameters again, ensuring parameter-robustness. This result is formalized in the following lemma.

\begin{lemma}\label{lem:special case}
	The inf-sup constant $\gamma$ in \eqref{bounds A} is independent of $\mu$ and ${K}$ on $\mathbf{X} / \mathbb{R}_D$ for \textbf{Case NE} and, similarly, on $\mathbf{X} / \mathbb{R}_S$ for \textbf{Case EN}.
\end{lemma}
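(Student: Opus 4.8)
The plan is to recognize that all parameter dependence in \Cref{lemma: NE} and \Cref{lemma: EN} enters through the auxiliary field $\mathbf{v}^\Gamma$ produced by \Cref{lem: construction}, and that this field is unnecessary on the relevant quotient spaces. I would carry out \textbf{Case NE} on $\mathbf{X} / \mathbb{R}_D$ in detail; \textbf{Case EN} on $\mathbf{X} / \mathbb{R}_S$ then follows by interchanging the Stokes and Darcy roles, exactly as in the passage from \Cref{lemma: NE} to \Cref{lemma: EN}.

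First I would fix $q = (q_S, q_D)$ with $q_D \in Q_{D, h} / \mathbb{R}$, so that $(q_D, 1)_D = 0$. In the proof of \Cref{lemma: NE}, the field $\mathbf{v}^\Gamma$ was introduced solely to absorb the Darcy-pressure mean $\bar f = -(q_D, 1)_D$. On the quotient space this mean vanishes, so I may take $\mathbf{v}^\Gamma = 0$; the augmented pressures then reduce to $\tilde q_S = q_S$ and $\tilde q_D = q_D$, and the test function collapses to $\mathbf{v} = -(\omega_S^{-1} \mu^{-1} \mathbf{v}_S^0, \omega_D^{-1} K \mathbf{v}_D^0)$, which is precisely the form used in \Cref{lemma: NN}.

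Next I would confirm that the inf-sup data are available without a quotient on the Stokes pressure. Since \textbf{Case NE} assumes $|\Gamma_S^N| > 0$ and $|\Gamma_D^N| = 0$, \Cref{ass:ass1} applies on the full space $Q_{S, h}$, while the ``moreover'' clause of \Cref{ass:ass2} supplies the Darcy inf-sup condition on $Q_{D, h} / \mathbb{R}$. Hence $\mathbf{v}_S^0$ and $\mathbf{v}_D^0$ satisfying the analogues of \eqref{eqs: properties v0} exist, and each has zero normal flux across $\Gamma$ because of the interface constraints built into \eqref{inf-sup-Stokes} and \eqref{inf-sup-Darcy}, so $\mathbf{v} \in \mathbf{V}_h$. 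The remaining computation is then identical to \eqref{eqs: infsup NN}, giving $b_h(\mathbf{v}, q) = \|q\|_{Q_h}^2$ and $\|\mathbf{v}\|_{V_h}^2 \lesssim \|q\|_{Q_h}^2$ with constants depending only on $\gamma_S$, $\gamma_D$, and $\omega_S, \omega_D \ge 1$. The factor $\max\{\mu K, 1\}$ that previously appeared in \eqref{eq: NE 2} was exactly the term $\omega_D^{-2} \mu K^2 \|\mathbf{v}_S^\Gamma\|_{V_{S,h}}^2$, which is now absent, so the inf-sup constant is independent of $\mu$ and $K$.

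The one point demanding care, and the closest thing to an obstacle, is verifying that deleting $\mathbf{v}^\Gamma$ removes \emph{every} parameter-dependent contribution rather than just the obvious one. This reduces to rechecking \eqref{eq: NE 1}--\eqref{eq: NE 3} with all $\mathbf{v}^\Gamma$-terms struck out, and in each line the surviving terms coincide with those already controlled in a parameter-robust way in \Cref{lemma: NN}. I therefore expect no genuine difficulty: the lemma is essentially a corollary of the estimates in \Cref{lemma: NE,lemma: EN}, sharpened by the quotient. For \textbf{Case EN} on $\mathbf{X} / \mathbb{R}_S$, the symmetric argument uses $(q_S, 1)_S = 0$ to set $\bar f = 0$ and $\mathbf{v}^\Gamma = 0$, applying \Cref{ass:ass1} on $Q_{S, h} / \mathbb{R}$ and \Cref{ass:ass2} on the full $Q_{D, h}$, once more producing a parameter-robust constant.
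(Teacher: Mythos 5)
Your proposal is correct and follows essentially the same route as the paper: on the quotient space $(q_D,1)_D=0$, so the auxiliary field $\mathbf{v}^\Gamma$ from \Cref{lem: construction} can be taken to vanish, and the argument then reduces to the parameter-robust construction of \Cref{lemma: NN}. Your additional checks---that \Cref{ass:ass1} applies on the full $Q_{S,h}$ while the ``moreover'' clause of \Cref{ass:ass2} supplies the Darcy inf-sup on $Q_{D,h}/\mathbb{R}$, and that striking the $\mathbf{v}^\Gamma$-terms from \eqref{eq: NE 1}--\eqref{eq: NE 3} removes all parameter dependence---are exactly the details the paper leaves implicit.
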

\begin{proof}
	We start with \textbf{Case NE} and consider the subspace $ \mathbf{X} / \mathbb{R}_D$. In this case, we have $(q_D, 1)_D = 0$ and, thus, in the proof of \Cref{lemma: NE}, we have $\mathbf{v}^\Gamma = 0$. As a result, the proof proceeds identically to that for \cref{lemma: NN}, yielding a parameter-independent inf-sup constant for the bilinear form $b_h$. 

  The proof of \textbf{Case EN} follows analogously on the subspace $\mathbf{X} / \mathbb{R}_S$. As a result, the details are therefore omitted. 
\end{proof}

\cref{lem:special case} allows us to analyze the effective condition number in the quotient space, as described in the following theorem. 

\begin{theorem}\label{thm:ConditionNumber-Case3&4}
	For \textbf{Case NE}, under \cref{ass:ass1} and \cref{ass:ass2}, the effective condition number of the preconditioned operator satisfies 
	\begin{equation*}
		\kappa_{\mathrm{eff}}(\mathcal{BA}) :=  \|\mathcal{B}\mathcal{A}\|_{\mathbf{X} / \mathbb{R}_D\mapsto\mathbf{X} / \mathbb{R}_D}  \|(\mathcal{B}\mathcal{A})^{-1}\|_{\mathbf{X} / \mathbb{R}_D\mapsto\mathbf{X} / \mathbb{R}_D}   = \mathcal{O}(1),
	\end{equation*}
	where the quotient space $\mathbf{X} / \mathbb{R}_D$ is endowed with the same norm $\enorm{\cdot}$ from \eqref{norm}.
  The analogous result holds for \textbf{Case EN} on $\mathbf{X} / \mathbb{R}_S$.
\end{theorem}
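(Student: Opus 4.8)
The plan is to mirror the argument of \Cref{thm:ConditionNumber-Case1&2} almost verbatim, but carried out on the quotient space $\mathbf{X} / \mathbb{R}_D$ rather than on the full space $\mathbf{X}$. The entire proof of \Cref{thm:ConditionNumber-Case1&2} uses nothing about $\mathcal{A}$ beyond the two bounds in \eqref{bounds A}: the continuity constant $C$ controls $\|\mathcal{B}\mathcal{A}\|$, while the inf-sup constant $\gamma$ controls $\|(\mathcal{B}\mathcal{A})^{-1}\|$, giving $\kappa \le C/\gamma$. Hence it suffices to re-establish both bounds on $\mathbf{X} / \mathbb{R}_D$ with constants independent of $\mu$ and ${K}$, after which the conclusion is immediate.

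First I would observe that the continuity bound descends for free. Identifying $\mathbf{X} / \mathbb{R}_D$ with the closed subspace of $\mathbf{X}$ whose Darcy pressure component has zero mean, the supremum defining $C$ is then taken over a smaller set, so the parameter-independent continuity constant furnished by \Cref{lemma-1} is inherited unchanged. The matching lower bound is exactly the content of \Cref{lem:special case}: on $\mathbf{X} / \mathbb{R}_D$ the troublesome constant Darcy-pressure mode is removed, the interface correction $\mathbf{v}^\Gamma$ vanishes, and the inf-sup construction collapses to the parameter-robust one of \Cref{lemma: NN}. Together these reinstate \eqref{bounds A} on the quotient with constants $0 < \gamma \le C < \infty$ that are independent of the physical parameters.

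With \eqref{bounds A} available on $\mathbf{X} / \mathbb{R}_D$, I would then repeat the two displayed estimates from the proof of \Cref{thm:ConditionNumber-Case1&2}, now interpreting every norm, inner product, and duality pairing on the quotient, to obtain $\|\mathcal{B}\mathcal{A}\|_{\mathbf{X} / \mathbb{R}_D \mapsto \mathbf{X} / \mathbb{R}_D} \le C$ and $\|(\mathcal{B}\mathcal{A})^{-1}\|_{\mathbf{X} / \mathbb{R}_D \mapsto \mathbf{X} / \mathbb{R}_D} \le \gamma^{-1}$, whence $\kappa_{\mathrm{eff}}(\mathcal{B}\mathcal{A}) \le C/\gamma = \mathcal{O}(1)$. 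The \textbf{Case EN} statement is then identical after exchanging the roles of the Stokes and Darcy components and working on $\mathbf{X} / \mathbb{R}_S$, invoking the second half of \Cref{lem:special case}.

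I expect the only genuine subtlety to be making the operator-preconditioning framework rigorous on the quotient, namely checking that $\mathcal{A}$ induces an isomorphism between $\mathbf{X} / \mathbb{R}_D$ and its dual. This is precisely where the zero-mean identification matters: restricting to zero-mean Darcy pressures turns the composite form into a bounded, symmetric, inf-sup stable bilinear form on a closed subspace, so standard Babu\v{s}ka theory delivers the isomorphism with the very constants $C$ and $\gamma$ just secured. Once this is in place, everything downstream is the same algebra as in \Cref{thm:ConditionNumber-Case1&2}, and no further parameter-dependence can enter.
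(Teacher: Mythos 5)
Your proposal matches the paper's proof: both invoke \cref{lem:special case} to obtain a parameter-robust inf-sup constant on the quotient space, note that the continuity bound is inherited since the supremum is over a subspace, and then rerun the argument of \cref{thm:ConditionNumber-Case1&2} verbatim to conclude $\kappa_{\mathrm{eff}}(\mathcal{BA}) \le C/\gamma$. Your additional remarks on identifying the quotient with the zero-mean subspace are a harmless elaboration of what the paper leaves implicit.
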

\begin{proof}
Based on \cref{lem:special case}, the inf-sup condition of $\mathcal{A}$ becomes
\begin{align*}
\inf_{(\mathbf{u},p)\in\mathbf{X} / \mathbb{R}_D} \sup_{{(\mathbf{v},q)\in\mathbf{X} / \mathbb{R}_D}} \frac{\mathcal{A}(\mathbf{u},p;\mathbf{v},q)}{\enorm{(\mathbf{u},p)} \enorm{(\mathbf{v},q)} } \geq \gamma, 
\end{align*}
where $\gamma$ now is parameter-robust.  Consequently, following the same proof of \cref{thm:ConditionNumber-Case1&2}, we can show that $\kappa_{\mathrm{eff}}(\mathcal{BA})  \leq  C/\gamma$.
\end{proof}

By the Courant–Fischer–Weyl min-max principle, \cref{lem:special case} and \cref{thm:ConditionNumber-Case3&4} indicate that we can divide $\sigma(\mathcal{BA})$, the spectrum of $\mathcal{BA}$, into two parts: $\{\lambda_0\}$ and $\sigma_1(\mathcal{BA})$.  The eigenvalue $\lambda_0$, which has the smallest absolute value, depends on the physical parameters $\mu$ and ${K}$. On the other hand, $\sigma_1(\mathcal{BA}) \subset [-C, -\gamma] \cup [\gamma, C]$ contains the remaining eigenvalues that are parameter-robust.  Based on the theory of the Minimal Residual method \cite{simoncini2013superlinear}, the convergence rate is dominated by $(\kappa_{\mathrm{eff}}(\mathcal{BA})-1)/(\kappa_{\mathrm{eff}}(\mathcal{BA})+1)$, which is parameter-robust.  Thus, $\mathcal{B}$ can still be considered as a parameter-robust preconditioner for \textbf{Case NE} and \textbf{Case EN}, as confirmed by our numerical experiments in \Cref{sec:numerical}.

\section{Numerical Examples}\label{sec:numerical}

In this section, we investigate convergence of the finite-element methods and evaluate the parameter-robustness of the proposed preconditioners. All computations are performed on a laptop equipped with an 11th Gen Intel(R) Core(TM) i5-1135G7 processor and 16GB RAM, using MATLAB.R2024b. 

For each test, we use GMRes to solve the linear system arising from both the $H(\operatorname{div})$-comforming $\mathbb{P}_1\oplus\mathbb{RT}_0-\mathbb{P}_0-\mathbb{RT}_0-\mathbb{P}_0$ discretization, and the nonconforming $\mathbb{CR}_0-\mathbb{P}_0-\mathbb{CR}_0-\mathbb{P}_0$ discretization. We set the maximum number of iterations to $1000$ and the stopping criterion to $\| \mathcal{A}\mathbf{x}_n - \mathbf{b}\|_{\ell^2}\leq 10^{-6}\|\mathbf{b}\|_{\ell^2}$. The initial guess is always set to $\mathbf{0}$.   We choose the parameters $\omega_S = 100$ and $\omega_D = 1$ (cf.~\cref{eqs: norms v and Q}) in all the experiments, and set the BJS parameter in \cref{interface-3} as $\alpha_{BJS} = 1$. 
In this subsection, we test the errors and convergence rates.  The computational domain consists of the Stokes domain $\Omega_S = [0, 1] \times [0, 1]$, the Darcy domain $\Omega_D = [1, 2]\times[0, 1]$, and the interface $\Gamma = \{1\} \times (0, 1)$. The exact solutions are given by
\begin{align*}
\mathbf{u}_S &= 
\left(
  \begin{array}{c}
    (e^{x}-e)\cos(\pi y) \\
    -e^{x}\sin(\pi y)/\pi \\
  \end{array}
\right),
& p_S &= 2e^{x}\cos(\pi y), & \text{ in } &\Omega_S, \\
\mathbf{u}_D &= 
\left(
  \begin{array}{c}
    -(e^{x}-e)\cos(\pi y) \\
    \pi(e^x -xe)\sin(\pi y) \\
  \end{array}
\right),
& p_D &= (e^x - xe)\cos(\pi y), & \text{ in } &\Omega_D.
\end{align*}
The right-hand sides $\mathbf{f}_S, \mathbf{f}_D$ and the boundary conditions are determined by substituting the chosen physical parameters and the exact solution into the Stokes-Darcy problem. For simplicity, we present the results only for Stokes-Darcy coupling problem with essential boundary condition, i.e. \textbf{Case EE}. 
\subsection{Convergence Rates}
\begin{figure}[ht]
\centering
\includegraphics[width=\textwidth]{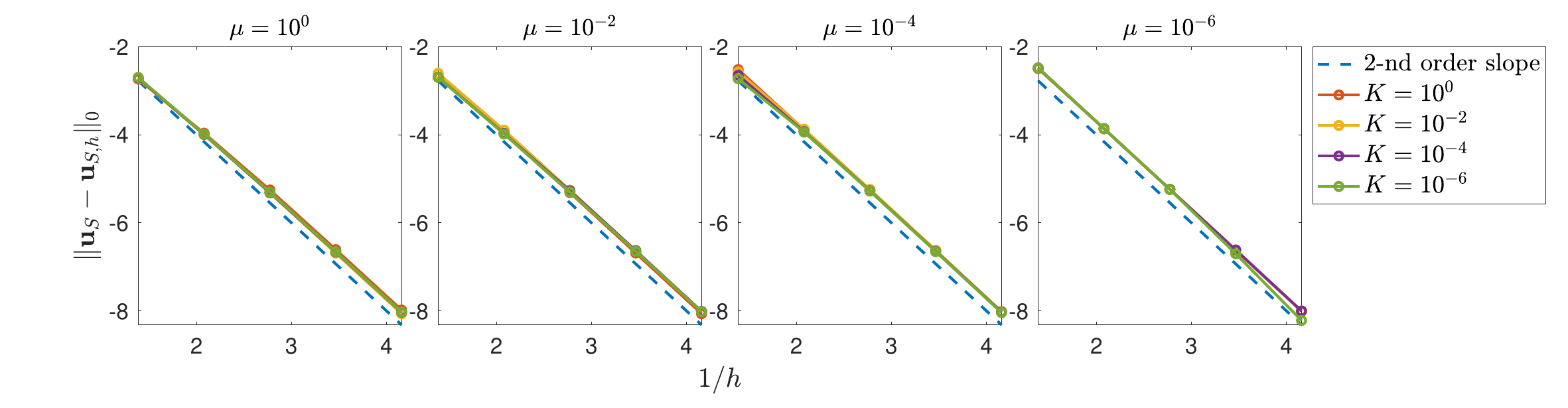}\\
\includegraphics[width=\textwidth]{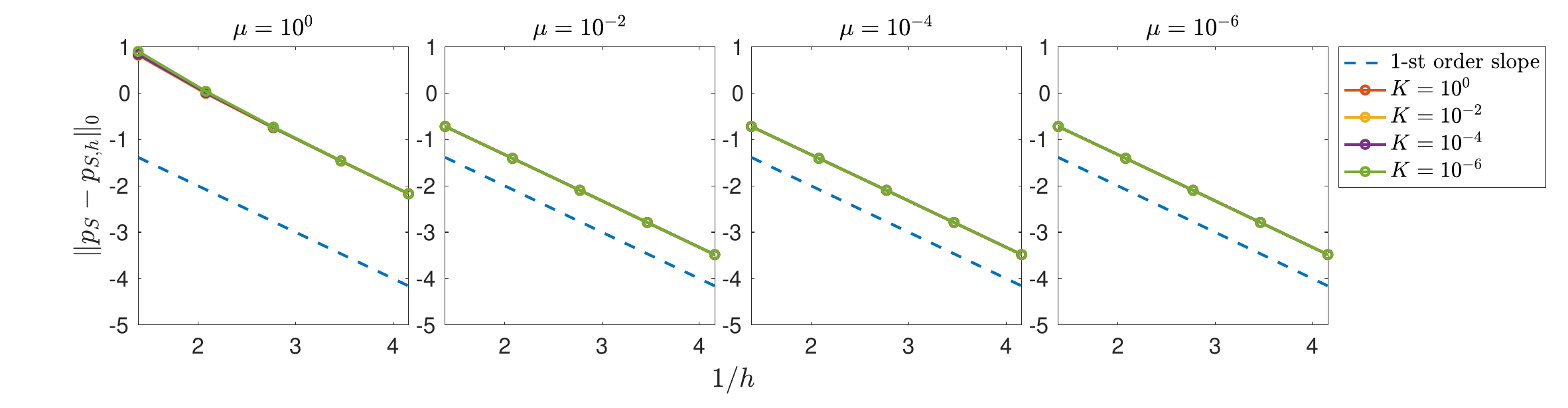}\\
\includegraphics[width=\textwidth]{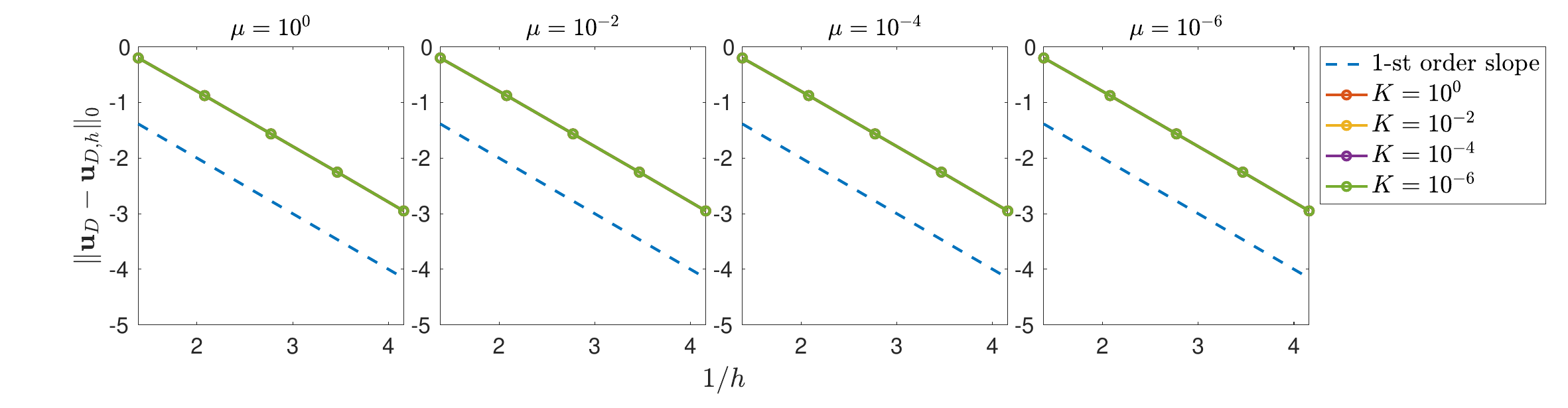}\\
\includegraphics[width=\textwidth]{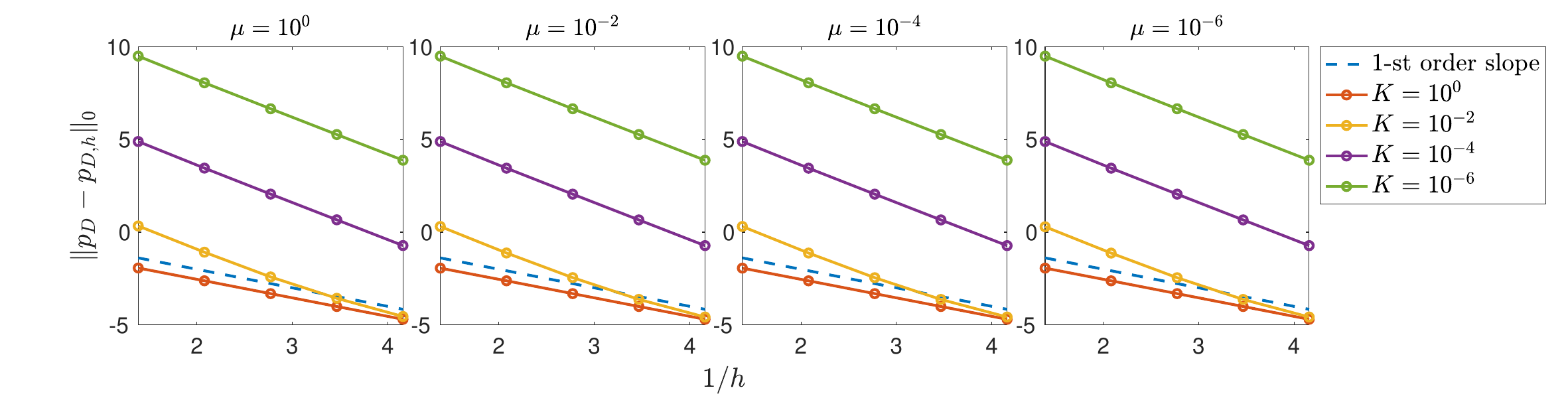}\\
\caption{Errors and convergence rate using $H(\operatorname{div})$-conforming elements (displayed in log-log scale). The horizontal axis corresponds the reciprocal of the mesh size $h$, while the vertical axis represents the errors measured in different norms.}
\label{fig:conforming}
\vskip -20pt
\end{figure}
\begin{figure}
\centering
\includegraphics[width=\textwidth]{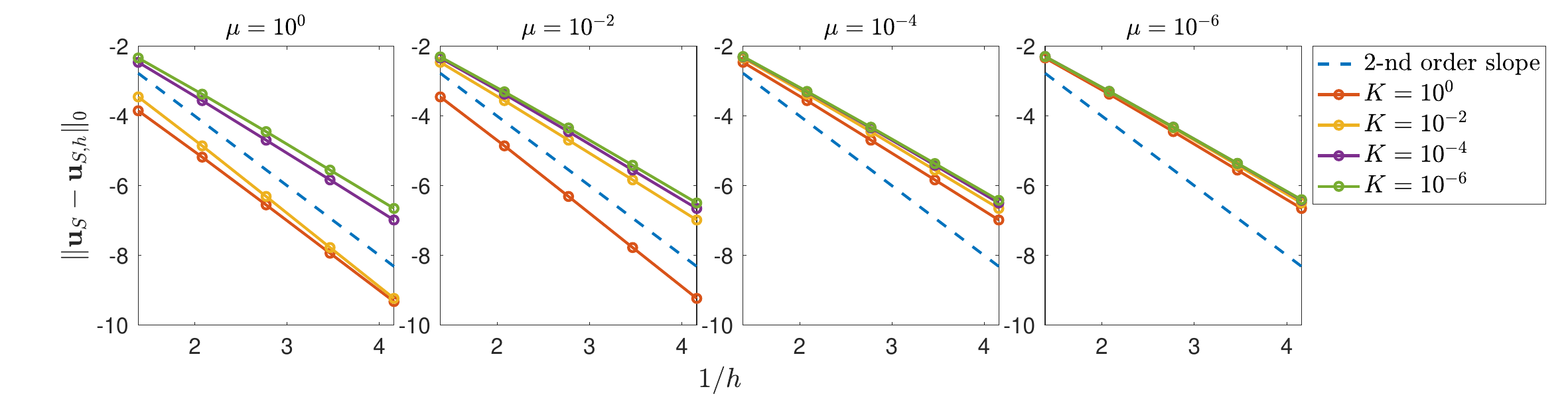}\\
\includegraphics[width=\textwidth]{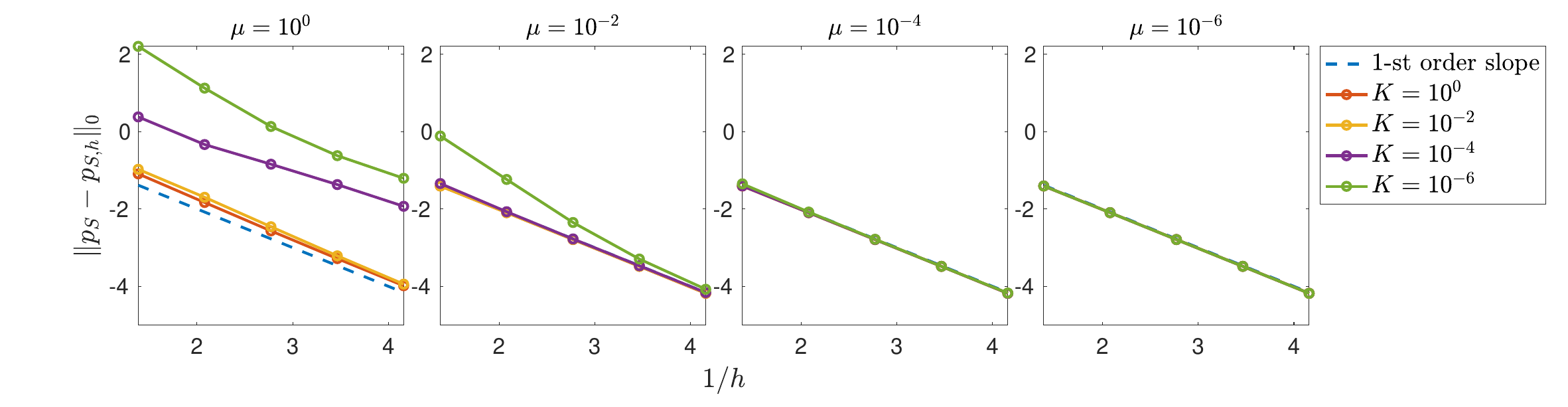}\\
\includegraphics[width=\textwidth]{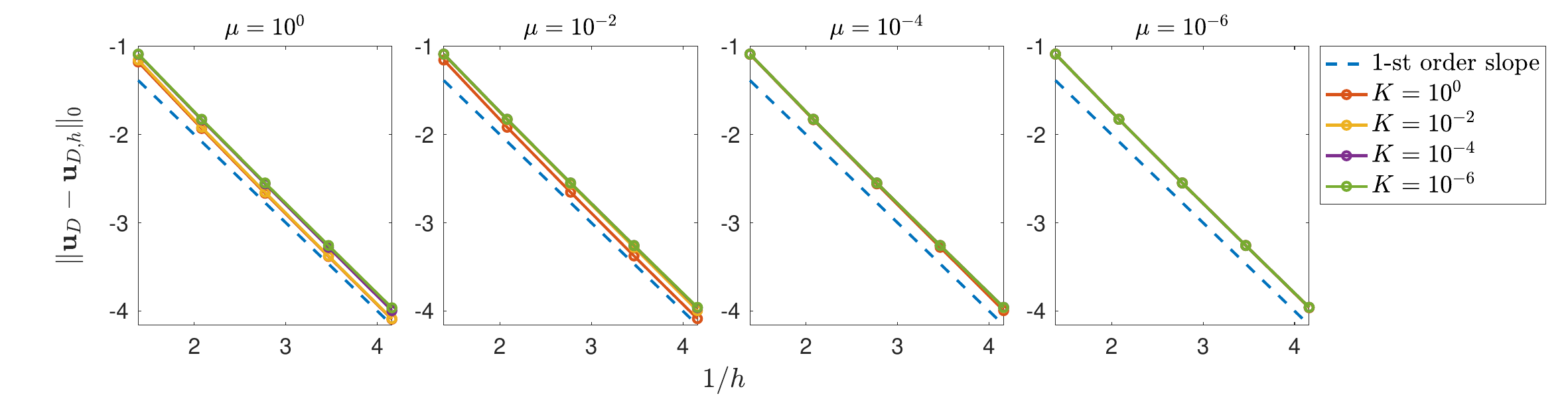}\\
\includegraphics[width=\textwidth]{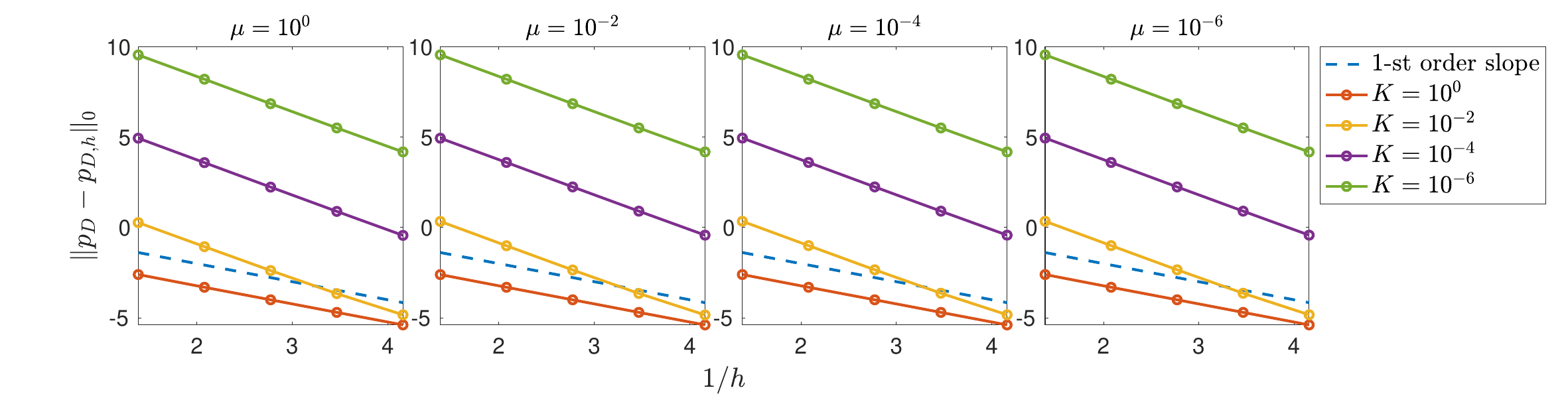}\\
\caption{Errors and convergence rate using nonconforming elements (displayed in log-log scale). The horizontal axis corresponds the reciprocal of the mesh size $h$, while the vertical axis represents the errors measured in different norms.
}
\label{fig:nonconforming}
\vskip -20pt
\end{figure}

The errors and convergence rates for the $H(\operatorname{div})$-conforming elements and non-conforming elements are shown in \cref{fig:conforming,fig:nonconforming}, respectively. We measure the errors and convergence rates in various norms using mesh size $h = 1/4, 1/8, 1/16, 1/32$, and $ 1/64$, while varying the viscosity $\mu$ and hydraulic conductivity $K$ in the ranges $10^{-6} \leq \mu \leq 1$ and $10^{-6} \leq K \leq 1$. For the $H(\operatorname{div})$-conforming elements, \cref{fig:conforming} shows that the pressures converge linearly in the $L^2$ norm. Moreover, the velocity exhibits quadratic convergence in $\Omega_S$, whereas in the Darcy domain it is first order, indicating that the convergence rates are optimal. For the nonconforming elements, \cref{fig:nonconforming} illustrates that the errors of the Darcy velocity and both pressures converge linearly. For the Stokes velocity, when $K$ becomes smaller, the rate deteriorates to approximately $\frac32$. The optimal rates can be recovered by using a larger penalty parameter for the jump term. However, such a study is beyond the scope of this paper, and we simply take the parameter to be $1$.

\subsection{Parameter-robustness} 
In this section, we study the performance of the preconditioner with respect to the mesh size $h$ and physical parameters $\mu$ and $K$, to validate the analysis in \cref{sec-preconditioner}. 
The diagonal preconditioner $\mathcal{B}$ \eqref{preconditioner} is implemented with a direct solver, i.e., its diagonal blocks, $(A_{\mathbf{u}}+D_{\mathbf{u}})^{-1}$ and $I_p^{-1}$, are computed using MATLAB's build-in ``$\backslash$'' function. 

We first investigate the dependence of the preconditioner on the mesh size. Setting $\mu = K = 1$, we begin with a coarse mesh of $h = 1/8$. The mesh is uniformly refined four times, and for each refinement, we present the condition numbers and the number of iterations (in parentheses) for four different boundary conditions, see \cref{tab:cond-h}. From the relatively consistent iteration counts and condition numbers, we observe that the preconditioned system is robust with respect to the discretization parameters.

\begin{table}[h]
\renewcommand{\arraystretch}{1.1}
\centering
\caption{The condition numbers and iteration counts (in parentheses) for different mesh sizes $h$, with material parameters fixed at $\mu = K = 1$.}   \label{tab:cond-h}
\begin{tabular}{|c|ccccc|} 
\hline  \hline 
\multicolumn{6}{|c|}{$H(\operatorname{div})$-conforming element}\\
\hline
$\text{Case} \backslash h$ & $2^{-3}$ & $2^{-4}$ & $2^{-5}$ & $2^{-6}$ & $2^{-7}$ \\
\hline 
\textbf{NN}  & 1.08(5) & 1.08(5) & 1.08(5) & 1.08(5) & 1.08(5) \\
\textbf{EE}  & 1.14(6) & 1.14(6) & 1.14(6) & 1.14(6) & 1.14(6) \\
\textbf{NE}  & 6.11(7) & 6.02(7) & 5.99(7) & 5.98(7) & 5.97(6) \\
\textbf{EN}  & 1.14(6) & 1.14(6) & 1.14(6) & 1.14(6) & 1.14(6) \\
\hline \hline
\multicolumn{6}{|c|}{Nonconforming element}\\
\hline
$\text{Case} \backslash h$ & $2^{-3}$ & $2^{-4}$ & $2^{-5}$ & $2^{-6}$ & $2^{-7}$ \\
\hline 
\textbf{NN} & 1.08(5) & 1.08(5) & 1.08(5) & 1.08(5) & 1.08(4) \\
\textbf{EE} & 1.10(5) & 1.11(5) & 1.11(5) & 1.11(5) & 1.11(5) \\
\textbf{NE} & 5.82(7) & 5.90(7) & 5.94(7) & 5.96(7) & 5.97(6) \\
\textbf{EN} & 1.09(5) & 1.09(5) & 1.09(5) & 1.09(5) & 1.09(5)\\
\hline \hline
\end{tabular}
\end{table}

We continue by investigating the robustness of preconditioner with respect to physical parameters. In this case, we vary both the (scalar) hydraulic conductivity $K$ and the viscosity $\mu$ over a range of eight orders of magnitude, for a fixed mesh size $h = 1/32$. \cref{tab-conf:cond-case12}-\cref{table:cond-case4} show the iteration counts which are shown in parentheses for all cases for the two discretizations. It is apparent that the preconditioner is robust with respect to both parameters, reaching the desired tolerance within a maximum of eight iterations for the two test cases. 
\begin{table}
\renewcommand{\arraystretch}{1.1}
\centering
\caption{The condition numbers and iteration counts (in parentheses) for \textbf{Case NN} and \textbf{Case EE} using $H(\operatorname{div})$-conforming element with a mesh size of $h = 1/32$.}  
\label{tab-conf:cond-case12}
\begin{tabular}{|l|c|ccccc|}
\hline \hline 
\multicolumn{2}{|c|}{\multirow{2}{*}{\textbf{NN}}}  & \multicolumn{5}{c|}{$\log_{10}(\mu)$} \\
\cline{3-7} \multicolumn{2}{|c|}{} & -4 & -2 & 0 & 2 & 4 \\
\hline 
\multirow{5}{*}{\rotatebox[origin=c]{90}{$\log_{10}(K)$}}
& 4      & 1.08(5) & 1.08(5) & 1.08(5) & 1.08(5) & 1.08(5) \\
& 2      & 1.06(5) & 1.08(5) & 1.08(5) & 1.08(5) & 1.08(5) \\
& 0      & 1.05(5) & 1.06(5) & 1.08(5) & 1.08(5) & 1.08(5) \\
& -2     & 1.05(5) & 1.05(5) & 1.06(5) & 1.08(5) & 1.08(5) \\
& -4     & 1.05(5) & 1.05(5) & 1.05(5) & 1.06(5) & 1.08(5) \\
\hline
\hline 
\multicolumn{2}{|c|}{\multirow{2}{*}{\textbf{EE}}}  & \multicolumn{5}{c|}{$\log_{10}(\mu)$} \\
\cline{3-7} \multicolumn{2}{|c|}{} & -4 & -2 & 0 & 2 & 4 \\
\hline 
\multirow{5}{*}{\rotatebox[origin=c]{90}{$\log_{10}(K)$}}
& 4      & 1.14(6) & 1.14(6) & 1.14(6) & 1.14(6) & 1.14(6) \\
& 2      & 1.27(6) & 1.14(6) & 1.14(6) & 1.14(6) & 1.14(6) \\
& 0      & 1.40(6) & 1.27(6) & 1.14(6) & 1.14(6) & 1.14(6) \\
& -2     & 1.41(6) & 1.40(6) & 1.27(6) & 1.14(6) & 1.14(6) \\
& -4     & 1.41(6) & 1.41(6) & 1.40(6) & 1.27(8) & 1.14(6) \\
\hline \hline 
\end{tabular}
\end{table}

\begin{table}
\renewcommand{\arraystretch}{1.1}
\centering
\caption{The condition number $\kappa$, the effective condition number $\kappa_{\mathrm{eff}}$, and the iteration counts (in parentheses) for \textbf{Case NE} using $H(\operatorname{div})$-conforming element ($h = 1/32$).}  
\label{tab-conf:cond-case3}
\begin{tabular}{|l|c|ccccc|}
\hline  \hline
\multicolumn{2}{|c|}{\multirow{2}{*}{$\kappa$}}  & \multicolumn{5}{c|}{$\log _{10}(\mu)$} \\
\cline{3-7} \multicolumn{2}{|c|}{} & -4 & -2 & 0 & 2 & 4 \\
\hline 
\multirow{5}{*}{\rotatebox[origin=c]{90}{$\log_{10}(K)$}}
& 4      & 6.00 & 4.57e+02 & 4.54e+04    & 4.54e+06  & 4.54e+08 \\
& 2      & 1.45 & 6.00     & 4.57e+02    & 4.54e+04  & 4.54e+06 \\
& 0      & 1.41 & 1.45     & 6.00        & 4.57e+02  & 4.54e+04 \\
& -2     & 1.41 & 1.41     & 1.45        & 6.00      & 4.57e+02 \\
& -4     & 1.41 & 1.41     & 1.41        & 1.45      & 6.00 \\
\hline
\hline 
\multicolumn{2}{|c|}{\multirow{2}{*}{$\kappa_{\mathrm{eff}}$}}  & \multicolumn{5}{c|}{$\log _{10}(\mu)$} \\
\cline{3-7} \multicolumn{2}{|c|}{} & -4 & -2 & 0 & 2 & 4 \\
\hline 
\multirow{5}{*}{\rotatebox[origin=c]{90}{$\log_{10}(K)$}}
& 4      & 1.10(7) & 1.10(8) & 1.10(6) & 1.10(5) & 1.10(5) \\
& 2      & 1.08(6) & 1.10(6) & 1.10(5) & 1.10(5) & 1.10(5) \\
& 0      & 1.08(5) & 1.08(5) & 1.10(7) & 1.10(5) & 1.10(5) \\
& -2     & 1.08(6) & 1.08(6) & 1.08(6) & 1.10(7) & 1.10(5) \\
& -4     & 1.08(6) & 1.08(6) & 1.08(6) & 1.08(6) & 1.10(7) \\
\hline \hline
\end{tabular}
\end{table}

\begin{table}[h]
	\renewcommand{\arraystretch}{1.1}
\centering
\caption{The condition number $\kappa$, the effective condition number $\kappa_{\mathrm{eff}}$, and the iteration counts (in parentheses) for \textbf{Case EN} using $H(\operatorname{div})$-conforming element ($h = 1/32$).}  
\label{tab-conf:cond-case4}
\begin{tabular}{|l|c|ccccc|}
\hline  \hline 
\multicolumn{2}{|c|}{\multirow{2}{*}{$\kappa$}}  & \multicolumn{5}{c|}{$\log _{10}(\mu)$} \\
\cline{3-7} \multicolumn{2}{|c|}{} & -4 & -2 & 0 & 2 & 4 \\
\hline 
\multirow{5}{*}{\rotatebox[origin=c]{90}{$\log_{10}(K)$}}
& 4      & 1.14      & 1.14       & 1.14      & 1.14 & 1.14  \\
& 2      & 1.42      & 1.14       & 1.14      & 1.14 & 1.14  \\
& 0      & 2.07e+01  & 1.42       & 1.14      & 1.14 & 1.14  \\
& -2     & 1.69e+03  & 2.06e+01   & 1.42      & 1.14 & 1.14  \\
& -4     & 9.57e+04  & 9.51e+02   & 1.78e+01  & 1.42 & 1.14  \\
\hline
\hline 
\multicolumn{2}{|c|}{\multirow{2}{*}{$\kappa_{\mathrm{eff}}$}}  & \multicolumn{5}{c|}{$\log _{10}(\mu)$} \\
\cline{3-7} \multicolumn{2}{|c|}{} & -4 & -2 & 0 & 2 & 4 \\
\hline 
\multirow{5}{*}{\rotatebox[origin=c]{90}{$\log_{10}(K)$}}
& 4      & 1.09(6) & 1.08(6) & 1.08(6) & 1.08(6) & 1.08(6) \\
& 2      & 1.14(6) & 1.09(6) & 1.08(6) & 1.08(6) & 1.08(6) \\
& 0      & 1.14(7) & 1.14(6) & 1.09(6) & 1.08(6) & 1.08(6) \\
& -2     & 1.14(6) & 1.14(6) & 1.14(7) & 1.09(6) & 1.08(6) \\
& -4     & 1.14(5) & 1.14(5) & 1.14(6) & 1.14(7) & 1.09(6) \\
\hline \hline
\end{tabular}
\end{table}
\begin{table}[h]
\renewcommand{\arraystretch}{1.1}
\centering
\caption{The condition numbers and iteration counts (in parentheses) for \textbf{Case NN} and \textbf{Case EE} using nonconforming element with a mesh size of $h = 1/32$.}  
\label{tab-non:cond-case12}
\begin{tabular}{|l|c|ccccc|}
\hline  \hline
\multicolumn{2}{|c|}{\multirow{2}{*}{\textbf{NN}}}  & \multicolumn{5}{c|}{$\log _{10}(\mu)$} \\
\cline{3-7} \multicolumn{2}{|c|}{} & -4 & -2 & 0 & 2 & 4 \\
\hline 
\multirow{5}{*}{\rotatebox[origin=c]{90}{$\log_{10}(K)$}}
& 4      & 1.08(5) & 1.08(5) & 1.08(5) & 1.08(5) & 1.08(5) \\
& 2      & 1.05(5) & 1.08(5) & 1.08(5) & 1.08(5) & 1.08(5) \\
& 0      & 1.05(5) & 1.05(5) & 1.08(5) & 1.08(5) & 1.08(5) \\
& -2     & 1.05(5) & 1.05(5) & 1.05(5) & 1.08(5) & 1.08(5) \\
& -4     & 1.05(5) & 1.05(5) & 1.05(5) & 1.05(5) & 1.08(5) \\
\hline
\hline 
\multicolumn{2}{|c|}{\multirow{2}{*}{\textbf{EE}}}  & \multicolumn{5}{c|}{$\log _{10}(\mu)$} \\
\cline{3-7} \multicolumn{2}{|c|}{} & -4 & -2 & 0 & 2 & 4 \\
\hline 
\multirow{5}{*}{\rotatebox[origin=c]{90}{$\log_{10}(K)$}}
& 4      & 1.11(6) & 1.10(6) & 1.10(5) & 1.10(5) & 1.10(5) \\
& 2      & 1.27(6) & 1.11(5) & 1.10(5) & 1.10(5) & 1.10(5) \\
& 0      & 1.40(5) & 1.27(5) & 1.11(5) & 1.10(5) & 1.10(5) \\
& -2     & 1.40(5) & 1.40(5) & 1.27(5) & 1.11(5) & 1.10(5) \\
& -4     & 1.40(5) & 1.40(5) & 1.40(5) & 1.27(5) & 1.11(5) \\
\hline \hline
\end{tabular}
\end{table}

According to the theory, the condition numbers should be robust for \textbf{Case NN} and \textbf{Case EE}. As shown in \cref{tab-conf:cond-case12} and \cref{tab-non:cond-case12}, the results confirm robustness with respect to the physical parameters, aligning with these theoretical predictions.

For \textbf{Case NE} and \textbf{Case EN}, our analysis showed that while the condition number $\kappa$ depends on the physical parameters, the effective condition number $\kappa_{\mathrm{eff}}$ is independent of them. In \cref{tab-conf:cond-case3} and \cref{table:cond-case3}, we present the condition and effective condition numbers varying the physical parameters for \textbf{Case NE}.  As observed, the condition number scales as $\max\{\sqrt{\mu K}, 1\}$, consistent with \cref{lemma: NE}, while the effective condition number remains parameter-robust.  Consequently, the number of iterations remains stable, demonstrating the robustness of the proposed preconditioner and validating \cref{thm:ConditionNumber-Case3&4}. \cref{tab-conf:cond-case4} and \cref{table:cond-case4} present the condition and effective condition numbers for \textbf{Case EN}. As expected, we observe similar results to \textbf{Case NE}.  Specifically, the condition number scales as $\max\{1/\sqrt{\mu K}, 1\}$, while the effective condition number is parameter-robust, leading to stable iteration counts.  These findings are consistent with \cref{lemma: EN} and \cref{thm:ConditionNumber-Case3&4}, further validating the robustness of the preconditioner.

\begin{table}[h]
	\renewcommand{\arraystretch}{1.1}
\centering
\caption{The condition number $\kappa$, the effective condition number $\kappa_{\mathrm{eff}}$, and the iteration counts (in parentheses) for \textbf{Case NE} using nonconforming element ($h = 1/32$.)
}  
\label{table:cond-case3}
\begin{tabular}{|l|c|ccccc|}
\hline  \hline 
\multicolumn{2}{|c|}{\multirow{2}{*}{$\kappa$}}  & \multicolumn{5}{c|}{$\log _{10}(\mu)$} \\
\cline{3-7} \multicolumn{2}{|c|}{} & -4 & -2 & 0 & 2 & 4 \\
\hline 
\multirow{4}{*}{\rotatebox[origin=c]{90}{$\log_{10}(K)$}}
& 4      & 5.94 & 4.52e+02 & 4.49e+04    & 4.49e+06  & 4.50e+08 \\
& 2      & 1.45 & 5.94     & 4.52e+02    & 4.49e+04  & 4.49e+06 \\
& 0      & 1.41 & 1.45     & 5.94        & 4.52e+02  & 4.49e+04 \\
& -2     & 1.40 & 1.41     & 1.45        & 5.94      & 4.52e+02 \\
& -4     & 1.40 & 1.40     & 1.41        & 1.45      & 5.94 \\
\hline
\hline 
\multicolumn{2}{|c|}{\multirow{2}{*}{$\kappa_{\mathrm{eff}}$}}  & \multicolumn{5}{c|}{$\log _{10}(\mu)$} \\
\cline{3-7} \multicolumn{2}{|c|}{} & -4 & -2 & 0 & 2 & 4 \\
\hline 
\multirow{4}{*}{\rotatebox[origin=c]{90}{$\log_{10}(K)$}}
& 4      & 1.10(7) & 1.10(8) & 1.10(6) & 1.10(5) & 1.10(5) \\
& 2      & 1.08(6) & 1.10(7) & 1.10(5) & 1.10(5) & 1.10(5) \\
& 0      & 1.08(5) & 1.08(5) & 1.10(7) & 1.10(5) & 1.10(5) \\
& -2     & 1.08(5) & 1.08(5) & 1.08(5) & 1.10(7) & 1.10(5) \\
& -4     & 1.08(5) & 1.08(5) & 1.08(5) & 1.08(5) & 1.10(7) \\
\hline \hline
\end{tabular}
\end{table}

\begin{table}[h]
	\renewcommand{\arraystretch}{1.1}
\centering
\caption{The condition number $\kappa$, the effective condition number $\kappa_{\mathrm{eff}}$, and the iteration counts (in parentheses) for \textbf{Case EN} using nonconforming element  ($h = 1/32$).}  
\label{table:cond-case4}
\begin{tabular}{|l|c|ccccc|}
\hline  \hline
\multicolumn{2}{|c|}{\multirow{2}{*}{$\kappa$}}  & \multicolumn{5}{c|}{$\log _{10}(\mu)$} \\
\cline{3-7} \multicolumn{2}{|c|}{} & -4 & -2 & 0 & 2 & 4 \\
\hline 
\multirow{4}{*}{\rotatebox[origin=c]{90}{$\log_{10}(K)$}}
& 4      & 1.09      & 1.08       & 1.08      & 1.08 & 1.08  \\
& 2      & 1.41      & 1.09       & 1.08      & 1.08 & 1.08  \\
& 0      & 1.79e+01  & 1.41       & 1.09      & 1.08 & 1.08  \\
& -2     & 1.41e+03  & 1.79e+01   & 1.41      & 1.09 & 1.08  \\
& -4     & 1.40e+05  & 1.41e+03   & 1.79e+01  & 1.41 & 1.09  \\
\hline
\hline 
\multicolumn{2}{|c|}{\multirow{2}{*}{$\kappa_{\mathrm{eff}}$}}  & \multicolumn{5}{c|}{$\log _{10}(\mu)$} \\
\cline{3-7} \multicolumn{2}{|c|}{} & -4 & -2 & 0 & 2 & 4 \\
\hline 
\multirow{4}{*}{\rotatebox[origin=c]{90}{$\log_{10}(K)$}}
& 4      & 1.07(6) & 1.07(6) & 1.07(5) & 1.07(5) & 1.07(5) \\
& 2      & 1.05(6) & 1.07(5) & 1.07(5) & 1.07(5) & 1.07(5) \\
& 0      & 1.05(7) & 1.05(5) & 1.07(5) & 1.07(5) & 1.07(5) \\
& -2     & 1.05(8) & 1.05(7) & 1.05(5) & 1.07(5) & 1.07(5) \\
& -4     & 1.05(5) & 1.05(5) & 1.05(7) & 1.05(5) & 1.07(5) \\
\hline \hline
\end{tabular}
\end{table}

\subsection{Inexact Preconditioner}
In the previous subsection, the preconditioner $\mathcal{B}$ \eqref{preconditioner} is implemented using a direct solver.  While this preconditioner is parameter-robust, it can still be expensive in practice.  Specifically, the block $I_p^{-1}$ only requires inverting a scaled mass matrix associated with the pressure finite-element spaces, which is generally straightforward.   For example, we use $\mathbb{P}_0$ for the pressures, therefore, we only need to inverted a diagonal matrix in our implementation.  On the other hand, inverting the block $A_{\mathbf{u}} + D_{\mathbf{u}}$ could be quite challenging due to the presence of the $(\operatorname{div},\operatorname{div})$-like term, i.e., $D_{\mathbf{u}}$.  This term is well-known for introducing a large, nontrivial near-kernel space for $A_{\mathbf{u}}+D_{\mathbf{u}}$, making it difficult to handle computationally. 

In this subsection, we adopt unsmoothed aggregation Algebraic Multigrid Method (UA-AMG) \cite{wang2013parallel,hu2019adaptive} to solve the $A_{\mathbf{u}}+D_{\mathbf{u}}$ block inexactly.  In addition, following \cite{arnold2000multigrid,adler2022monolithic}, we apply an overlapping block smoother on the finest level to handle the large near-kernel introduced by the $(\operatorname{div},\operatorname{div})$-like term $D_{\mathbf{u}}$.  Specifically, we use vertex patch as the blocks in order to suitably handle the locally-supported basis functions for the divergence-free space.  Finally, the block $A_{\mathbf{u}}+D_{\mathbf{u}}$ is solved by the GMRes method with UA-AMG as a preconditioner. To ensure robustness, the tolerance for this approximate solve is set to $10^{-2}$.   Because the inner Krylov iterations introduce variability, we employ flexible GMRes as the outer Krylov iterative method here.

\begin{table}[h]
\centering
\caption{The iteration counts of the inexact solver in different cases using $H(\operatorname{div})$-conforming element with a mesh size of $h = 1/32$.}  
\label{tab-inexact:iteration number}
\begin{minipage}{0.45\textwidth}
\begin{tabular}{|l|r|rrrrr|}
\hline  \hline 
\multicolumn{2}{|c|}{\multirow{2}{*}{\textbf{NN}}}  & \multicolumn{5}{c|}{$\log _{10}(\mu)$} \\
\cline{3-7} \multicolumn{2}{|c|}{} & -4 & -2 & 0 & 2 & 4 \\
\hline 
\multirow{5}{*}{\rotatebox[origin=c]{90}{$\log_{10}(K)$}}
& 4      & 13 & 16 & 15 & 12 & 12 \\
& 2      & 9  & 10 & 12 & 12 & 12 \\
& 0      & 9  & 10 & 12 & 12 & 12 \\
& -2     & 13 & 10 & 10 & 12 & 11 \\
& -4     & 14 & 11 & 10 & 10 & 10 \\
\hline
\hline 
\multicolumn{2}{|c|}{\multirow{2}{*}{\textbf{NE}}}  & \multicolumn{5}{c|}{$\log _{10}(\mu)$} \\
\cline{3-7} \multicolumn{2}{|c|}{} & -4 & -2 & 0 & 2 & 4 \\
\hline 
\multirow{5}{*}{\rotatebox[origin=c]{90}{$\log_{10}(K)$}}
& 4      & 17 & 29 & 17 & 12 & 12 \\
& 2      & 11 & 13 & 12 & 12 & 12 \\
& 0      & 11 & 11 & 13 & 12 & 12 \\
& -2     & 19 & 13 & 13 & 13 & 11 \\
& -4     & 24 & 16 & 13 & 13 & 13 \\
\hline \hline 
\end{tabular}
\end{minipage}
\begin{minipage}{0.45\textwidth}
\begin{tabular}{|l|r|rrrrr|}
\hline  \hline
\multicolumn{2}{|c|}{\multirow{2}{*}{\textbf{EE}}}  & \multicolumn{5}{c|}{$\log _{10}(\mu)$} \\
\cline{3-7} \multicolumn{2}{|c|}{} & -4 & -2 & 0 & 2 & 4 \\
\hline 
\multirow{5}{*}{\rotatebox[origin=c]{90}{$\log_{10}(K)$}}
& 4      & 18 & 22 & 16 & 13 & 11 \\
& 2      & 11 & 11 & 11 & 11 & 11 \\
& 0      & 12 & 11 & 11 & 11 & 11 \\
& -2     & 17 & 14 & 17 & 15 & 11 \\
& -4     & 25 & 16 & 17 & 15 & 14 \\
\hline
\hline 
\multicolumn{2}{|c|}{\multirow{2}{*}{\textbf{EN}}}  & \multicolumn{5}{c|}{$\log _{10}(\mu)$} \\
\cline{3-7} \multicolumn{2}{|c|}{} & -4 & -2 & 0 & 2 & 4 \\
\hline 
\multirow{5}{*}{\rotatebox[origin=c]{90}{$\log_{10}(K)$}}
& 4      & 14 & 17 & 17 & 14 & 11 \\
& 2      & 11 & 11 & 11 & 11 & 11 \\
& 0      & 13 & 12 & 11 & 11 & 11 \\
& -2     & 14 & 14 & 12 & 11 & 11 \\
& -4     & 15 & 12 & 14 & 12 & 11 \\
\hline \hline
\end{tabular}
\end{minipage}
\end{table}

\begin{table}[h]
\centering
\caption{The iteration counts of the inexact solver in different cases using nonconforming element with a mesh size of $h = 1/32$.}  
\label{tab-inexact:iteration number-non}
\begin{minipage}{0.45\textwidth}
\begin{tabular}{|l|r|rrrrr|}
\hline  \hline 
\multicolumn{2}{|c|}{\multirow{2}{*}{ \textbf{NN}}}  & \multicolumn{5}{c|}{$\log _{10}(\mu)$} \\
\cline{3-7} \multicolumn{2}{|c|}{} & -4 & -2 & 0 & 2 & 4 \\
\hline 
\multirow{5}{*}{\rotatebox[origin=c]{90}{$\log_{10}(K)$}}
& 4      & 11 & 16 & 16 & 11 & 9 \\
& 2      & 8  & 10 & 10 & 9  & 9 \\
& 0      & 8  & 10 & 10 & 9  & 9 \\
& -2     & 10 & 10 & 10 & 10 & 9 \\
& -4     & 10 & 10 & 10 & 10 & 10 \\
\hline
\hline 
\multicolumn{2}{|c|}{\multirow{2}{*}{\textbf{NE}}}  & \multicolumn{5}{c|}{$\log _{10}(\mu)$} \\
\cline{3-7} \multicolumn{2}{|c|}{} & -4 & -2 & 0 & 2 & 4 \\
\hline 
\multirow{4}{*}{\rotatebox[origin=c]{90}{$\log_{10}(K)$}}
& 4      & 17 & 31 & 18 & 13 & 10 \\
& 2      & 10 & 12 & 10 & 10 & 9 \\
& 0      & 11 & 11 & 11 & 9  & 9 \\
& -2     & 11 & 11 & 11 & 11 & 9 \\
& -4     & 11 & 11 & 11 & 11 & 11 \\
\hline \hline 
\end{tabular}
\end{minipage}
\begin{minipage}{0.45\textwidth}
\begin{tabular}{|l|r|rrrrr|}
\hline  \hline 
\multicolumn{2}{|c|}{\multirow{2}{*}{\textbf{EE}}}  & \multicolumn{5}{c|}{$\log _{10}(\mu)$} \\
\cline{3-7} \multicolumn{2}{|c|}{} & -4 & -2 & 0 & 2 & 4 \\
\hline 
\multirow{5}{*}{\rotatebox[origin=c]{90}{$\log_{10}(K)$}}
& 4      & 17 & 28 & 17 & 13 & 10 \\
& 2      & 10 & 11 & 10 & 10 & 10 \\
& 0      & 11 & 11 & 10 & 10 & 10 \\
& -2     & 11 & 11 & 11 & 10 & 10 \\
& -4     & 11 & 11 & 11 & 11 & 10 \\
\hline
\hline 
\multicolumn{2}{|c|}{\multirow{2}{*}{\textbf{EN}}}  & \multicolumn{5}{c|}{$\log _{10}(\mu)$} \\
\cline{3-7} \multicolumn{2}{|c|}{} & -4 & -2 & 0 & 2 & 4 \\
\hline 
\multirow{5}{*}{\rotatebox[origin=c]{90}{$\log_{10}(K)$}}
& 4      & 14 & 19 & 19 & 12 & 10 \\
& 2      & 9  & 10 & 10 & 10 & 10 \\
& 0      & 11 & 11 & 10 & 10 & 10 \\
& -2     & 15 & 12 & 11 & 10 & 10 \\
& -4     & 10 & 10 & 11 & 11 & 10 \\
\hline \hline 
\end{tabular}
\end{minipage}
\end{table}

\cref{tab-inexact:iteration number}-\cref{tab-inexact:iteration number-non} present the iteration counts for the inexact block preconditioner. The results show that the iteration counts remain robust with respect to the physical parameters for both discretizations. This indicates that the UA-AMG method, combined with a properly chosen block smoother, has the potential to provide an effective inexact version of the block preconditioner $\mathcal{B}$ \eqref{preconditioner}. However, the development and theoretical analysis of a fully inexact and scalable block preconditioner remain subjects of ongoing research and will be addressed in future work.

\section{Conclusions}
In this paper, we studied the Stokes-Darcy coupled problem, focusing on developing a parameter-robust preconditioner that avoids fractional operators.  By imposing the normal flux continuity interface condition directly within the finite-element spaces, we established  well-posedness of coupled Stokes-Darcy systems equipped under various boundary conditions.  Specifically, the inf-sup constants are parameter robust for \textbf{Case NN} and \textbf{Case EE}, while they depend on the physical parameters $\mu$ and $K$ for \textbf{Case NE} and \textbf{Case EN}.  However, parameter robustness can be restored for these cases by considering quotient spaces. 

Building on the well-posedness analysis and leveraging the operator preconditioning framework, we developed and analyzed a block diagonal preconditioner that is robust with respect to the physical and discretization parameters. Notably, by enforcing the normal flux continuity conditions on the interface in the definition of the finite-element spaces, instead of using Lagrange multipliers, our analysis does not employ fractional Sobolev norms. In turn, our block preconditioner does not involve any fractional operators, enhancing its practicality for implementation.  Numerical results confirmed the theoretical results and demonstrated the robustness of the precondtioners.  

Furthermore, we explore the development of an inexact version of the preconditioner using the UA-AMG method with a block smoother to address the large near-kernel introduced by the $(\operatorname{div},\operatorname{div})$-like term in the velocity block.  Preliminary results highlighted the potential effectiveness of this approach.   Further theoretical and numerical investigation into this inexact preconditioning strategy remains a subject of our ongoing research.

\bibliographystyle{siamplain}
\bibliography{references}
\end{document}